\newtheorem{theorem}{Theorem} [section]
\newtheorem{lemma}[theorem]{Lemma}
\newtheorem{proposition}[theorem]{Proposition}
\newtheorem{remark}[theorem]{Remark} 
\newtheorem{definition}{Definition}
\DeclareMathOperator*{\supp}{supp}
\newcommand{\noi}{\noindent}
\newcommand{\N}{\mathbb{N}}
\newcommand{\M}{\mathcal{M}}
\newcommand{\Z}{\mathbb{Z}}
\newcommand{\R}{\mathbb{R}}
\newcommand{\T}{\mathbb{T}}
\newcommand{\C}{\mathbb{C}}
\newcommand{\Ll}{\mathcal{L}}
\newcommand{\Nn}{\mathcal{N}}
\newcommand{\g}{\mathfrak{g}}
\newcommand{\oet}{(\omega|_{M})_{\eta}}
\newcommand{\oget}{(\omega|_{M})_{g\cdot\eta}}
\newcommand{\dl}{\delta}
\newcommand{\eps}{\varepsilon}
\newcommand{\cj}{\overline}
\newcommand{\dt}{\partial_t}
\newcommand{\jb}[1]
{\langle #1 \rangle}
\numberwithin{equation}{section}
\numberwithin{theorem}{section}
\begin{document}

\title
[Soliton interaction with small Toeplitz potentials]
{\bf Soliton interaction with small Toeplitz
potentials for the Szeg\"o equation on $\R$}

\author{Oana Pocovnicu}
\address{Oana Pocovnicu\\
Laboratoire de Math\'ematiques d'Orsay\\
Universit\'e Paris-Sud (XI)\\
91405, Orsay Cedex, France}

\email{oana.pocovnicu@math.u-psud.fr}

\subjclass[2000]{ 35B15, 35Q51, 37K40, 37K10, 47B35.}

\keywords{Szeg\"o equation; soliton; effective Hamiltonian; Toeplitz operators}

\begin{abstract}
We consider the cubic Szeg\"{o} equation with a small Toeplitz potential
and with soliton initial data
\begin{equation*}
\begin{cases}
i\dt u=\Pi(|u|^{2}u)+\eps T_bu\\
u(0,x)=\alpha_0e^{i\phi_0}\mu_0\eta(\mu_0(x-a_0)).
\end{cases}
\end{equation*}

\noi
We show that up to time $\eps^{-1/2}\log(1/\eps)$ and errors of size $\eps^{1/2}$, the solution preserves the soliton
shape
$u(t,x)=\alpha e^{i\phi}\mu\eta(\mu(x-a))$, and the time dependent parameters
$a,\alpha,\phi,\mu$ evolve according to the effective dynamics, up to small corrections.
\end{abstract}

\date{\today}
\maketitle


\section{Introduction}

One of the most important properties
in the study of the nonlinear Schr\"odinger equations (NLS) is {\it dispersion}.
It is often exhibited in the form of the Strichartz estimates
of the corresponding linear flow.
In case of the cubic NLS:
\begin{equation}\label{eqn: Schrodinger 3}
i\partial_t u+\Delta u=|u|^2u, \quad (t,x)\in\R\times M,
\end{equation}

\noi
G\'erard and Grellier \cite{PGSGX} remarked that
there is a lack of dispersion
when $M$ is a sub-Riemannian manifold
(for example, the Heisenberg group).
In this situation, many of the classical arguments
used in the study of NLS no longer hold.
As a consequence, even the problem of global well-posedness of \eqref{eqn: Schrodinger 3}
on a sub-Riemannian manifold still remains open.
In \cite{PGSG, PGSGX},
G\'erard and Grellier introduced a model of a non-dispersive Hamiltonian equation
called {\it the cubic Sz\"ego equation.} (See \eqref{eq:szego 3} below.)
The study of this equation is expected to give new tools to be used in understanding
existence and other properties of smooth solutions of NLS in the absence of dispersion.

In this paper we will consider the Szeg\"o equation on the real line. The space
of solutions in this case is the Hardy space $L^2_+(\R)$
on the upper half-plane \\ $\C_+ = \{ z; \text{Im} z > 0\}$,
defined by
\[L^2_+(\R)=\{f\in L^2(\R); \, \supp{\hat{f}}\subset [0,\infty)\}.\]

\noi
The corresponding Sobolev spaces $H^s_+(\R)$, $s\geq 0$ are defined by:
\begin{align*}
H^s_+(\R)=&\big\{h\in L^2_+(\R);  \|h\|_{H^s_+}:
=\bigg{(}\frac{1}{2\pi}\int_0^{\infty}(1+|\xi|^2)^{s}|\hat{h}(\xi)|^2d\xi\bigg{)}^{1/2}<\infty\big\}.
\end{align*}

\noi
The Szeg\"o projector $\Pi$ is the projector on the non-negative frequencies,
\\ $\Pi:L^{2}(\R)\to L^2_+(\R)$
\[\Pi (f)(x)=\frac{1}{2\pi}\int_{0}^{\infty}e^{ix\xi}\hat{f}(\xi)d\xi.\]

\noi
For $u \in L^2_+(\R)$,
we consider \textit{the Sz\"ego equation on the real line}:
\begin{equation}\label{eq:szego 3}
i\dt u=\Pi(|u|^{2}u), \quad (t, x) \in \R\times \R.
\end{equation}

\noi
This equation is globally well-posed
in $H^{\frac{1}{2}}_+(\R)$.

On $L^2_+(\R)$ we introduce the symplectic form
\begin{equation*}
\omega(u,v)=\textup{Im}\int_{\R}u\bar{v}dx
\end{equation*}

\noi
and the real scalar product
\begin{equation*}
\jb{u,v}=\textup{Re}\int_{\R}u\bar{v}dx.
\end{equation*}

Let $\mathcal{D}\subset L^2_+(\R)$ be a dense subset of
$L^2_+(\R)$. We say that a function $F:\mathcal{D}\to\R$ admits a
Hamiltonian vector field $X_F:\mathcal{D}\to L^2_+(\R)$ if
\begin{equation*}
d_uF(h)=\omega(h,X_{F}(u)),
\end{equation*}

\noi
for all $u,h\in \mathcal{D}$. The function
\begin{equation*}
H(u)=\frac{1}{4}\int_{\R}|u(x)|^4dx
\end{equation*}

\noi
defined on $L^4_+(\R)$, admits the Hamiltonian vector field
\begin{equation*}
X_H(u)=-i\Pi (|u|^2u),
\end{equation*}

\noi
Thus the Szeg\"o equation is a Hamiltonian evolution.
The most remarkable property of this equation is the fact that it is completely integrable
in the sense that it posses a Lax pair structure \cite{pocov1}. The Lax pair is given in terms of
{\it Hankel
and Toeplitz operators}.

A Hankel operator $H_{u}:L^2_+\to L^2_+$ of symbol $u\in H^{1/2}_+$ is defined by
\[H_{u}(h)=\Pi(u\bar{h}).\]

\noi
$H_u$ is a Hilbert-Schmidt operator, it is $\C$-anti-linear and satisfies
\begin{equation}\label{sym H_u 3}
(H_{u}(h_{1}),h_{2})=(H_{u}(h_{2}),h_{1}).
\end{equation}

\noi
A Toeplitz operator $T_{b}:L^2_+\to L^2_+$ of symbol $b\in L^{\infty}(\R)$ is defined by
\[T_{b}(h)=\Pi(bh).\]

\noi
$T_{b}$ is $\C$-linear and bounded.  Moreover, $T_b$ is self-adjoint if and only if $b$ is real-valued.

In what follows we consider the perturbed Szeg\"o equation
with a small Toeplitz potential
\begin{equation}\label{PS}
i\partial_t u=\Pi (|u|^2u)+\eps T_bu.
\end{equation}

\noi
This is no longer a completely integrable equation. It is still
globally well posed in $H^{\frac{1}{2}}_+(\R)$ if $b\in H^1(\R)$. This can
be proved by following the lines of the proof of Theorem 2.1 in \cite{PGSG}
on the global well-posedness of the Szeg\"o equation.

If instead of the
Toeplitz potential we considered a multiplicative
linear potential $bu$,
then the corresponding equation would no longer be Hamiltonian. However, if
we project to $L^2_+$, obtaining this way a Toeplitz potential $T_bu=\Pi(bu)$,
we conserve the Hamiltonian structure of the Szeg\"o equation. For this reason,
the Toeplitz potential is the natural generalization
of the linear multiplicative potential
in the case of the Szeg\"o equation.

The Hamiltonian of equation \eqref{PS} is
\begin{equation*}
H_b(u)=\frac{1}{4}\int_{\R}|u(x)|^4dx+\frac{\eps}{2}\int_{\R}b(x)|u(x)|^2dx.
\end{equation*}

\noi
This yields that the Hamiltonian $H_b$ is formally
conserved by the flow. Note also that the fact that $b$
is a real valued function, yields the conservation of the mass
$Q(u)=\int |u|^2dx.$

The goal of the paper is to study the long time
behavior
of the solution of the perturbed Szeg\"o equation \eqref{PS}
having as initial condition a soliton of the unperturbed equation.
\begin{definition}
A soliton for the Szeg\"o equation on the real line is a solution
$u$ with the property that there exist $c,\omega\in\R$, $c\neq 0$ such that
\[u(t,x)=e^{-it\omega}u_0(x-ct).\]
\end{definition}

In \cite[Theorem 2]{pocov1}
it was proved that all the initial data of solitons for the Szeg\"o
equation on $\R$ are of the form
\begin{equation}\label{eqn: soliton}
u_0=e^{i\phi_0}\alpha_0 \mu_0 \eta(\mu_0(x-a_0))=\frac{e^{i\phi_0}\alpha_0}{x-a_0+\frac{i}{\mu_0}},
\end{equation}

\noi
where $\eta(x):=\frac{1}{x+i}$, $\alpha_0,\mu_0\in (0,\infty)$,
and $\phi_0, a_0\in\R$, and that the corresponding solution is
\begin{equation}\label{eqn: soliton at t}
u(t,x)=e^{i\phi(t)}\alpha_0 \mu_0 \eta(\mu_0(x-a(t)))=\frac{e^{i\phi(t)}\alpha_0}{x-a(t)+\frac{i}{\mu_0}},
\end{equation}

\noi
where  $\phi(t)=-\frac{\alpha_0^2\mu_0^2}{4}t+\phi_0$ and $a(t)=\frac{\alpha_0^2\mu_0}{2}t+a_0$.

We show that the solution of the perturbed Szeg\"o equation \eqref{PS} with initial data
$u_0=e^{i\phi_0}\alpha_0 \mu_0 \eta(\mu_0(x-a_0))$ preserves the form $u=e^{i\phi}\alpha \mu \eta(\mu(x-a))$
over a large interval of time, and the time dependent parameters $a,\alpha,\phi,\mu$ evolve according to the
effective dynamics, up to small corrections. More precisely, the main result of the paper is the following theorem.
\begin{theorem}\label{main theorem}
Let $b:\R\to\R$ be a function in $H^1(\R)$
with the property that $b'\in L^1(\R)$.
 Let $0<\eps\ll 1$ and $0<\delta<\frac{1}{2}$.
 If $u$ is a solution of the perturbed Szeg\"o equation with a small Toeplitz potential
\begin{align}\label{Szego T}
\begin{cases}
i\partial_t u=\Pi (|u|^2u)+\eps T_bu\\
u(0,x)=\alpha_0e^{i\phi_0}\mu_0\eta(\mu_0(x-a_0)),
\end{cases}
\end{align}

\noi
where $a_0,\phi_0\in\R$ and $\alpha_0,\mu_0\in (0,\infty)$,
 then
\begin{align*}
\|u(t)-\alpha(t)e^{i\phi(t)}\mu(t)\eta(\mu(t)(x-a(t)))\|_{H^{\frac{1}{2}}_+}
\leq C\eps^{\frac{1}{2}+\frac{\delta}{3}},
\end{align*}

\noi
for times $0\leq t\leq \frac{\dl}{6\ln c_0}\cdot\frac{1}{\eps^{\frac{1}{2}-\delta}}\ln(\frac{1}{\eps})$,
 where $c_0$ is a constant depending only on $\alpha_0$ and $\mu_0$, and $a,\alpha,\phi,\mu$ satisfy
\begin{align}\label{sys perturbations}
\begin{cases}
\dot{a}=\frac{\alpha^2\mu}{2}-\frac{2\eps}{\pi\mu}
\int b'(a+\frac{x}{\mu})\frac{x}{\mu}|\eta(x)|^2dx+O(\eps^{1+\frac{2\delta}{3}}),\\
\dot{\alpha}=\frac{\eps\alpha}{\pi\mu}
\int b'(a+\frac{x}{\mu})|\eta(x)|^2dx+O(\eps^{1+\frac{2\delta}{3}}),\\
\dot{\phi}=-\frac{\alpha^2\mu^2}{4}-\frac{\eps}{\pi}
\int b(a+\frac{x}{\mu})|\eta(x)|^2dx-\frac{\eps}{\pi}
\int b'(a+\frac{x}{\mu})\frac{x}{\mu}|\eta(x)|^2dx+O(\eps^{1+\frac{2\delta}{3}}),\\
\dot{\mu}=-\frac{2\eps}{\pi}
\int b'(a+\frac{x}{\mu})|\eta(x)|^2dx+O(\eps^{1+\frac{2\delta}{3}}).
\end{cases}
\end{align}

\noi
In addition,
if $\bar{a}, \bar{\alpha}, \bar{\phi}, \bar{\mu}$ satisfy
\begin{align}\label{effective dynamics}
\begin{cases}
\dot{\bar{a}}=\frac{\bar{\alpha}^2\bar{\mu}}{2}-\frac{2\eps}{\pi\bar{\mu}}
\int b'(\bar{a}+\frac{x}{\bar{\mu}})\frac{x}{\bar{\mu}}|\eta(x)|^2dx,\\
\dot{\bar{\alpha}}=\frac{\eps\bar{\alpha}}{\pi\bar{\mu}}
\int b'(\bar{a}+\frac{x}{\bar{\mu}})|\eta(x)|^2dx,\\
\dot{\bar{\phi}}=-\frac{\bar{\alpha}^2\bar{\mu}^2}{4}-\frac{\eps}{\pi}
\int b(\bar{a}+\frac{x}{\bar{\mu}})|\eta(x)|^2dx-\frac{\eps}{\pi}
\int b'(\bar{a}+\frac{x}{\bar{\mu}})\frac{x}{\bar{\mu}}|\eta(x)|^2dx,\\
\dot{\bar{\mu}}=-\frac{2\eps}{\pi}\int b'(\bar{a}+\frac{x}{\bar{\mu}})|\eta(x)|^2dx,
\end{cases}
\end{align}

\noi
with the same initial data $a_0, \alpha_0, \phi_0, \mu_0$, then
\begin{align}\label{eqn a-bar{a}}
\begin{cases}
&|a-\bar{a}|\leq \tilde{c}_0\delta\eps^{\frac{1}{2}+\delta}\ln(\frac{1}{\eps}),\\
&|\alpha-\bar{\alpha}|\leq \tilde{c}_0\delta\eps^{\frac{1}{2}+\dl}
\ln(\frac{1}{\eps}),\\
&|\phi-\bar{\phi}|\leq \tilde{c}_0\delta\eps^{2\delta}\ln(\frac{1}{\eps})^2,\\
&|\mu-\bar{\mu}|\leq \tilde{c}_0\delta\eps^{\frac{1}{2}+\delta}\ln(\frac{1}{\eps}).
\end{cases}
\end{align}

\noi
where $\tilde{c}_0$ depends on $\alpha_0, \mu_0$.

As a consequence, if $\eps$ is small enough and $\frac{3}{10}<\delta<\frac{1}{2}$, then
for times \\ $0\leq t\leq \frac{\dl}{6\ln c_0}\cdot \frac{1}{\eps^{\frac{1}{2}-\delta}}\ln(\frac{1}{\eps})$
we have that
\begin{align}\label{approximation}
\|u(t)-\bar{\alpha}(t)e^{i\bar{\phi}(t)}\bar{\mu}(t)\eta(\bar{\mu}(t)(x-\bar{a}(t)))\|_{H^{\frac{1}{2}}_+}
\leq C\eps^{\frac{1}{2}+\frac{\delta}{3}}.
\end{align}

\end{theorem}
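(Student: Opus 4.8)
The plan is a modulation (orbital-decomposition) argument combined with an energy estimate built from the conservation laws of \eqref{Szego T}, closed by a continuity/bootstrap argument; the parameter system \eqref{sys perturbations} and the comparison estimates \eqref{eqn a-bar{a}}--\eqref{approximation} will then follow by Gronwall arguments. Throughout, write $u_{[p]}(x):=\alpha e^{i\phi}\mu\eta(\mu(x-a))$ for $p=(a,\alpha,\phi,\mu)$, let $\mathcal{M}=\{u_{[p]}\}$ be the soliton manifold, and set $D:=-i\dx$, $Q(v):=\|v\|_{L^2}^2$, $M(v):=(Dv,v)=\|v\|_{\dot H^{1/2}}^2$. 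First I would set up the decomposition: as long as $u(t)$ remains within $H^{1/2}_+$-distance $O(\eps^{1/2+\delta/3})$ of $\mathcal{M}$ — the bootstrap hypothesis, which holds at $t=0$ since $u(0)\in\mathcal{M}$ — the implicit function theorem produces $C^1$ parameters $p(t)$ and a remainder $w(t)=u(t)-u_{[p(t)]}$ with $w(0)=0$, subject to the four orthogonality conditions $\langle w,u_{[p]}\rangle=\langle w,iu_{[p]}\rangle=\langle w,\dx u_{[p]}\rangle=\langle w,Du_{[p]}\rangle=0$ (one checks, using the explicit form of $\eta$, that the relevant $4\times4$ matrix is block-diagonal with nonzero determinant). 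These four conditions annihilate the two symmetry directions (phase, translation) together with the $Q'$- and $M'$-directions, which is exactly what is needed for the coercivity estimate in Step~3.

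Next I would derive the modulation equations by differentiating the four orthogonality conditions in $t$ and substituting \eqref{Szego T}, which gives an invertible linear system for $\dot p$. Its solution consists of: the free soliton laws $\dot a=\tfrac{\alpha^2\mu}{2},\ \dot\phi=-\tfrac{\alpha^2\mu^2}{4},\ \dot\alpha=\dot\mu=0$ coming from $\Pi(|u_{[p]}|^2u_{[p]})$; the explicit $\eps$-integrals appearing in \eqref{sys perturbations}, coming from $\eps T_bu_{[p]}$ after the substitution $x\mapsto a+x/\mu$ and one integration by parts (legitimate since $b'\in L^1$); and remainder terms. The remainder contains a piece quadratic in $w$, a cross piece $O(\eps\|w\|_{H^{1/2}_+})$, and — since the symplectic orthogonality does not by itself annihilate all $w$-linear contributions — a genuinely $w$-linear piece, which is removed by the standard device of a secondary adjustment $p\mapsto p-\langle w,\chi(p)\rangle$ of the parameters (the profiles $\chi$ solving a linear equation built from the linearized operator), after which the remainder is $O(\eps\|w\|_{H^{1/2}_+}+\|w\|_{H^{1/2}_+}^2)$. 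Granting the bound $\|w\|_{H^{1/2}_+}\lesssim\eps^{1/2+\delta}\ln(1/\eps)$ proved in Step~3, this remainder is $O(\eps^{1+2\delta/3})$, which is \eqref{sys perturbations}. (One should note at this stage that \eqref{sys perturbations} forces $\tfrac{d}{dt}(\alpha^2\mu)=O(\eps^{1+2\delta/3})$ — the two leading $\eps$-contributions cancel, reflecting conservation of mass — and that the same algebra gives $\tfrac{d}{dt}(\bar\alpha^2\bar\mu)\equiv0$ along \eqref{effective dynamics}.)

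The heart of the proof is the control of $w$. I would introduce the Weinstein-type functional $\mathcal{F}_t(v)=H(v)-\omega(t)Q(v)-c(t)M(v)$, where $\omega(t)$ and $c(t)$ are the multipliers (explicit multiples of $\alpha(t)^2\mu(t)^2$ and $\alpha(t)^2\mu(t)$ respectively) for which $u_{[p(t)]}$ is a critical point, i.e.\ $\mathcal{F}_t'(u_{[p(t)]})=0$. Two facts are needed. First, a \emph{coercivity} estimate: by the variational characterization of the Szeg\"o soliton — it is the ground state of the Gagliardo--Nirenberg inequality $\|v\|_{L^4}^4\le C\|v\|_{L^2}^2\|v\|_{\dot H^{1/2}}^2$ on $L^2_+(\R)$, equivalently its orbital stability — the self-adjoint operator $\mathcal{F}_t''(u_{[p(t)]})$ has essential spectrum bounded away from $0$ of a fixed sign, and once the orthogonality conditions of Step~1 remove its finitely many non-positive eigendirections it satisfies $\langle\mathcal{F}_t''(u_{[p(t)]})w,w\rangle\le-\kappa\|w\|_{H^{1/2}_+}^2$ for a fixed $\kappa>0$; consequently $|\mathcal{F}_t(u(t))-\mathcal{F}_t(u_{[p(t)]})|\sim\|w(t)\|_{H^{1/2}_+}^2$ as long as $\|w\|_{H^{1/2}_+}$ is small. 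Second, since $H$, $Q$ and $M$ are each conserved by the \emph{unperturbed} Szeg\"o flow, $\mathcal{F}_t$ is a constant of motion for $\dot v=X_H(v)$, so along a solution of \eqref{Szego T} only the perturbation $\eps X_{T_b}$ and the time-dependence of $\omega,c$ contribute to $\tfrac{d}{dt}\mathcal{F}_t$; subtracting the identically-critical soliton term and using $\mathcal{F}_t'(u)=\mathcal{F}_t''(u_{[p]})w+O(\|w\|^2)$ together with $\langle w,u_{[p]}\rangle=\langle w,Du_{[p]}\rangle=0$ one gets
\[
\frac{d}{dt}\Big(\mathcal{F}_t(u)-\mathcal{F}_t(u_{[p]})\Big)=O\big(\eps\,\|w\|_{H^{1/2}_+}\big).
\]
Since $w(0)=0$, combining the last two statements gives $\|w(t)\|_{H^{1/2}_+}^2\lesssim\eps\int_0^t\|w\|_{H^{1/2}_+}\,ds$, hence $\|w(t)\|_{H^{1/2}_+}\lesssim\eps t$.

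Finally I would close the bootstrap and do the comparison. For $0\le t\le\frac{\delta}{6\ln c_0}\eps^{-(1/2-\delta)}\ln(1/\eps)$, Step~3 gives $\|w(t)\|_{H^{1/2}_+}\lesssim\eps^{1/2+\delta}\ln(1/\eps)\ll\eps^{1/2+\delta/3}$ for $\eps$ small, which strictly improves the bootstrap hypothesis (the precise cutoff time is what keeps $\alpha,\mu$ in a fixed compact neighbourhood of $\alpha_0,\mu_0$ and the Gronwall constants under control), and yields the first assertion of the Theorem. For \eqref{eqn a-bar{a}} one runs a Gronwall estimate on the difference of the systems \eqref{sys perturbations} and \eqref{effective dynamics}, which differ only by the $O(\eps^{1+2\delta/3})$ remainders: because $a$ and $\phi$ carry the $O(1)$ leading terms $\tfrac{\alpha^2\mu}{2}$ and $-\tfrac{\alpha^2\mu^2}{4}$, which are driven by the $O(\eps)$-slow evolution of $\alpha,\mu$ (while $\bar\alpha^2\bar\mu$ is exactly conserved along \eqref{effective dynamics}), the errors in $a,\alpha,\mu$ come out $\lesssim\delta\eps^{1/2+\delta}\ln(1/\eps)$ but the phase error comes out $\lesssim\delta\eps^{2\delta}\ln(1/\eps)^2$; requiring this phase error to be dominated by the permissible error $\eps^{1/2+\delta/3}$ is precisely the condition $\delta>\tfrac{3}{10}$, and \eqref{approximation} then follows from the first assertion together with the Lipschitz dependence of $u_{[\cdot]}$ on its parameters. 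The step I expect to be the main obstacle is the energy estimate of Step~3: a naive energy argument would give only $\|w\|\lesssim(\eps t)^{1/2}$, which is too weak to reach times $\eps^{-(1/2-\delta)}\ln(1/\eps)$ with an $\eps^{1/2+\delta/3}$ error, and the gain of a full power of $\eps$ rests entirely on the Hamiltonian structure — $\mathcal{F}_t$ is exactly conserved by the unperturbed flow and is critical at the soliton, so $\tfrac{d}{dt}\mathcal{F}_t$ carries an extra factor of $\|w\|$ — and on the effective dynamics \eqref{effective dynamics} being tuned so the $O(\eps)$ discrepancies cancel. A secondary but essential difficulty is the coercivity estimate, since the operator linearized around the Szeg\"o soliton is of Hankel/Toeplitz type, not a Schr\"odinger operator, and its spectral analysis uses the explicit form of the soliton.
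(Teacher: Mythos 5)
Your proposal follows the same broad Holmer--Zworski strategy as the paper (modulation, coercivity of a linearized/Weinstein operator, bootstrap, Gronwall comparison), but it diverges from the paper at the two steps that are genuinely Szeg\"o-specific, and one of those divergences is a real gap.

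\textbf{The coercivity step is asserted, not proved, and the assertion rests on a fact that is not available.} You claim that the soliton $\eta$ is ``the ground state of the Gagliardo--Nirenberg inequality $\|v\|_{L^4}^4\le C\|v\|_{L^2}^2\|v\|_{\dot H^{1/2}}^2$ on $L^2_+(\R)$'' and that coercivity of $\mathcal F_t''(u_{[p]})$ modulo the four orthogonality conditions follows from this variational characterization. For the Szeg\"o equation this is exactly the hard, structure-dependent lemma, and it is not in the literature in the form you invoke (the Szeg\"o soliton's extremizer property and orbital stability are not established via a Weinstein-style spectral count, because the linearized operator $-\tfrac{i}{2}\partial_x-2T_{|\eta|^2}-H_{\eta^2}+\tfrac14$ is a Hankel/Toeplitz-type operator, not a Schr\"odinger operator). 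The paper proves coercivity (Proposition~\ref{prop:coercivity}) by a completely different, short but very Szeg\"o-specific argument: it identifies the symplectic orthogonal complement of $T_\eta M$ with $\mathrm{Ker}(H_{\eta^2})$ via the Kronecker theorem for finite-rank Hankel operators, and on $\mathrm{Ker}(H_{\eta^2})$ the operator $\Ll$ reduces to $-\tfrac{i}{2}\partial_x+\tfrac14$, which is manifestly coercive. Without some replacement for this step your argument has a genuine hole at its foundation.

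\textbf{The energy estimate uses precisely the approach the paper rules out.} You use the time-dependent Weinstein functional $\mathcal F_t=H-\omega(t)Q-c(t)M$ at the physical (unscaled) soliton $u_{[p(t)]}$ and estimate $\tfrac{d}{dt}\bigl(\mathcal F_t(u)-\mathcal F_t(u_{[p]})\bigr)$, exploiting conservation of $H,Q,M$ under the free flow and criticality of $u_{[p]}$. The paper explicitly states that ``working with the Lyapunov functional as it was done in \cite{Zworski delta} does not give the desired result in the case of the Szeg\"o equation, since we no longer have a Galilean invariance,'' and therefore works instead in the rescaled frame $u=g\cdot(\eta+w)$, writes down the explicit evolution equation for $w$ (Proposition~\ref{Prop eqn w}), and estimates $\partial_t\langle\Ll w,w\rangle$ term by term (Lemma~\ref{main estimate}), the key cancellation being $\langle\Ll w,\,i\alpha^2\mu^2\Ll w\rangle=0$. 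Your bookkeeping does lead to the asserted $O(\eps\|w\|)$ bound, and in fact your one-shot Gronwall ($\|w(t)\|^2\lesssim\eps\int_0^t\|w\|$, hence $\|w\|\lesssim\eps t$) would even improve on the paper's $\eps^{-\delta/6}$ loss from the iterated bootstrap; but since the authors say this very approach fails, you would need to identify and address the obstruction they had in mind (the most likely candidate is that, without a Galilean boost, the velocity multiplier $c(t)$ cannot be decoupled from $\alpha,\mu$, and the term $\dot c\,\bigl(M(u)-M(u_{[p]})\bigr)$ requires the orthogonality $\langle w,Du_{[p]}\rangle=0$ to survive the full bootstrap interval, which in turn depends on the modulation equations you have not yet closed). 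As written, you are essentially reasserting the line of attack that the paper discarded, without confronting the stated reason for discarding it.

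Two smaller remarks. Your four orthogonality conditions $\langle w,u_{[p]}\rangle=\langle w,iu_{[p]}\rangle=\langle w,\partial_x u_{[p]}\rangle=\langle w,Du_{[p]}\rangle=0$ are in fact equivalent to the paper's symplectic orthogonality to $T_{u_{[p]}}M$: this is because for this particular profile $\partial_x(x\eta)=-i\,\partial_x\eta$, so the dilation direction is not independent over $\R$ from $i\partial_x\eta$. Once this is recognized, the ``secondary adjustment $p\mapsto p-\langle w,\chi(p)\rangle$'' you introduce to kill the remaining $w$-linear term in the modulation equations is unnecessary; the paper shows (via Lemma~\ref{lemma:Hamiltonian map} and the formula for the projection $P$) that symplectic orthogonality already gives $P(i\Ll w)=0$, so that $\|X\|\le C(\eps\|w\|_{L^2}+\|w\|^2_{H^{1/2}_+}+\|w\|^3_{H^{1/2}_+})$ with no genuinely $O(1)$-coefficient $w$-linear term left over.
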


The problem of studying the solution of a perturbed equation having as initial condition
a soliton of the unperturbed equation was first addressed in the setting of the nonlinear Schr\"odinger equation by
Bronski and Jerrard in \cite{Bronki Jerrard} and their result was improved by Keraani in \cite{Keraani, Keraani II}.
They considered the semiclassical regime which is equivalent to adding a slowly varying potential $V(\eps x)$.
The method consists in using the orbital stability of the soliton and the result states that the center of mass
moves according to Newton's equation $a''(t)=-DV(a)$. It seems difficult to adapt this method to
the setting of the Szeg\"o equation since it extensively exploits the relations between the densities of mass, energy, and momentum.
These identities have no correspondent for the Szeg\"o equation.

This problem was also considered by Fr\"ohlich, Tsai, and Yau and Fr\"ohlich, \\ Gustafson, Jonsson, and Sigal
in the settings of the Hartree equation and of the nonlinear Schr\"odinger equation with
a general nonlinearity in \cite{F1,F2,F3}. Some of these results were improved in
\cite{Zworski delta, Zworski} by Zworski and Holmer in the case of the one dimensional nonlinear
Schr\"odinger equation with a Dirac potential and with a slowly varying potential.
In this paper we adapt the method of Zworski and Holmer to the case of the Szeg\"o equation.

The starting point in proving Theorem \ref{main theorem} is to determine the vector field
corresponding to the restriction $H_b|_{M}$ of the Hamiltonian to the four-dimensional manifold of solitons
\begin{align*}
M=\{ e^{i\phi}\alpha \mu \eta (x-a)),
\phi,a \in\R, \alpha>0, \mu>0 \}.
\end{align*}

\noi
Then, we determine the flow of this vector field, called the {\it effective dynamics}. In the case
of the Szeg\"o equation with a small Toeplitz potential the effective dynamics are given in the system \eqref{effective dynamics}.
We then decompose the flow of the
perturbed Szeg\"o equation \eqref{PS} into a part belonging to the manifold $M$ and a part which is symplectically orthogonal
to $M$. We show that the part of the solution which is orthogonal to $M$ is small. Thus, the flow of \eqref{PS} is close to $M$.
Then, the heuristics pointed out by
Holmer and Zworski suggest that the flow is close to the flow of $H_b|_{M}$, i.e. the effective dynamics.
This can be rigorously proved and yields the approximation \eqref{approximation}.

In proving that the part of the flow which is orthogonal to $M$ is small we consider the
Lyapunov functional and use the coerciveness of the linearized operator.

\noi
First we consider the functional
$\mathcal{E}:H^{1/2}_+\to\R$,
\begin{equation}\label{eqn E}
\mathcal{E}(u)=\frac{1}{4}\int|u|^4dx+\frac{i}{4}\int
(\partial_xu)\bar{u}dx-\frac{1}{8}\int |u|^2dx.
\end{equation}

\noi
Then $\eta=\frac{1}{x+i}$ is a critical point of $\mathcal{E}$, i.e. $d_{\eta}\mathcal{E}=0$ since
\begin{equation}\label{eqn:eta}
\frac{i}{2}\partial_x\eta+\Pi(|\eta|^2\eta)-\frac{\eta}{4}=0.
\end{equation}

\noi
The Lyapunov functional is defined by
\begin{equation*}
L(w)=\mathcal{E}(w+\eta)-\mathcal{E}(w)
\end{equation*}

\noi
and the linearized operator $\mathcal{L}:H^{\frac{1}{2}}_+\to\R$ is
\begin{equation}\label{eqn:linearized}
\mathcal{L}(w)=\mathcal{E}''_{\eta}w=
-\frac{i}{2}\partial_xw-2T_{|\eta|^2}w-H_{\eta^2}w+\frac{1}{4}w.
\end{equation}

In \cite{Zworski delta}, Holmer and Zworski
consider the case of the nonlinear cubic Schr\"odinger equation with a Dirac potential,
 that can be generalized to the case of a multiplicative linear
potential.  The maximal time for which the approximation holds is of order $\frac{1}{\sqrt{\eps}}$.
Thus, the result
we obtain for the Szeg\"o equation with a Toeplitz potential (the natural extension of the multiplicative potential)
is close to \cite{Zworski delta}.
However, working with the Lyapunov functional as it was done in \cite{Zworski delta} does not give the desired result
in the case of the Szeg\"o equation,
since we no longer have a
Galilean invariance.
Consequently, we use the linearized operator, as it was done by the above cited authors in \cite{Zworski},
in the case of a slowly varying potential.

Notice that the {\it exact effective dynamics} given by
$\bar{a},\bar{\alpha}, \bar{\phi}, \bar{\mu}$, are an approximation of the solution of the perturbed equation
only for times
\[0<t \leq \frac{\dl}{6\ln c_0}\cdot\frac{1}{\eps^{\frac{1}{2}-\delta}}\ln(\frac{1}{\eps})
\leq \frac{\dl}{6\ln c_0}\cdot\frac{1}{\eps^{\frac{1}{5}}}\ln(\frac{1}{\eps}),\]

\noi
where $\delta>\frac{3}{10}$. (If we agree to have an approximation of order $\eps^{\frac{1}{2}}$,
instead of that of order $\eps^{\frac{1}{2}+\frac{\delta}{3}}$ that we have, we can actually go up to
times $0<t \leq \frac{\dl}{6\ln c_0}\cdot\frac{1}{\eps^{\frac{1}{4}}}\ln(\frac{1}{\eps})$.)
For larger times, the approximation is only given by
$a,\alpha, \phi, \mu$, which are {\it perturbations of the effective dynamics}.
The fact that we cannot approximate the solution by the exact effective dynamics for larger times (i.e. $0<\delta<\frac{3}{10}$)
is due to the estimate on $|\phi-\bar{\phi}|$
which is only of order $O(\eps^{2\delta -})$, while we need an approximation of order $O(\eps^{\frac{1}{2}+\frac{\delta}{3}})$.
This difficulty is caused by the complicated form of the effective dynamics and by the fact that the perturbed equation does
not conserve the momentum $\|u\|_{\dot{H}^{1/2}_+}^2$. In the case of the nonlinear Schr\"odinger equation with a
Dirac or a slowly varying potential, the effective dynamics have a simpler form
and give a good approximation of the solution for all the range of times considered in \cite{Zworski delta, Zworski}.

The structure of the paper is as follows. In section 2 we briefly describe the manifold of solitons. In section 3 we find the effective dynamics.
In section 4 we use the implicit function theorem to prove the orthogonal decomposition of the flow and determine the equation of $w$, the part of the flow which is orthogonal to $M$. In section 5 we prove the coerciveness of the linearized operator in directions orthogonal to the manifold $M$. In section 6 we estimate $w$
using a bootstrap argument and in section 7 we conclude the proof of Theorem \ref{main theorem}.

\section{Manifold of solitons}

We introduce below the manifold of solitons
for the Szeg\"o equation on the real line.

For $g=(a,\alpha,\phi,\mu)\in \R\times\R_+^{\ast}\times\T\times\R_+^{\ast}$,
where $\T=\R/2\pi\Z$,
we define the following map on $L^2_+(\R)$

\begin{align*}
u\mapsto g\cdot u,\,\,\,\,\,g\cdot u (x):=e^{i\phi}\alpha \mu u(\mu(x-a)).
\end{align*}

\noi
This action gives a group structure on
$\R\times\R_+^{\ast}\times\T\times\R_+^{\ast}$:
\begin{align*}
(a,\alpha,\phi,\mu)\cdot (a',\alpha ',\phi ',\mu ')
=(a '',\alpha '',\phi '',\mu ''),
\end{align*}

\noi
where
\begin{align}\label{eqn:g 3}
\begin{cases}
a''=a+\frac{a'}{\mu}\\
\alpha ''= \alpha\alpha '\\
\phi ''=\phi+\phi '\\
\mu ''= \mu\mu '.
\end{cases}
\end{align}

\noi
We denote this group by $G$. In order to determine the Lie algebra $\mathfrak{g}$
corresponding to this Lie group,
we compute
\begin{align*}
\partial_a[(a,1,0,1)\cdot u]\Big|_{a=0}&=-\partial_x u\\
\partial_{\alpha}[(0,\alpha,0,1)\cdot u]\Big|_{\alpha=1}&=u\\
\partial_{\phi}[(0,1,\phi,1)\cdot u]\Big|_{\phi=0}&=iu\\
\partial_{\mu}[(0,1,0,\mu)\cdot u]\Big|_{\mu=1}&=x\partial_{x}u+u=\partial_x(x\cdot u).
\end{align*}

\noi
Then, the Lie algebra $\mathfrak{g}$ is generated by
\begin{align*}
e_1=-\partial_x,\,\,\,e_2=1,\,\,\,e_3=i,\,\,\,e_4= \partial_x\cdot x.
\end{align*}

\noi
It acts on $\cup_{N\in\N}\M(N)$, where
\[\M(N):=\bigg\{ \frac{A(z)}{B(z)}\in L^2_+\Big | \deg(B)=N,\, \deg(A)\leq N-1,\, B(0)=1,\,pgcd(A,B)=1\bigg\}.\]

\noi
Notice that according to \cite{Nikolskii}[Lemma 6.2.1],
we have that $\cup_{N\in\N}\M(N)$ is dense in $L^2_+(\R)$.

The action $g$ is conformally symplectic in the sense that
\begin{align}\label{eqn:confsympl}
g^{\ast}\omega=\alpha^2(g)\mu(g)\omega.
\end{align}

\noi
Indeed, with the change of variables $y=\mu(x-a)$
\begin{align*}
(g^{\ast}\omega)(u,v)&=\textup{Im}\int_{\R}e^{i\phi}\alpha
\mu u(\mu(x-a))e^{-i\phi}\alpha
\mu \bar{v}(\mu(x-a))dx\\
&=\alpha^2\mu\textup{Im}\int_{\R}u(y)\bar{v}(y)dy
=\alpha^2\mu\omega(u,v).
\end{align*}

\begin{definition}
The manifold of solitons is the orbit of $\eta$, $\eta(x)=\frac{1}{x+i}$,
under the action of the group $G$:
\begin{align*}
M=G\cdot\eta=\{ e^{i\phi}\alpha \mu \eta (\mu(x-a)),
\phi,a \in\R, \alpha>0, \mu>0 \}.
\end{align*}

\end{definition}

We then make the following identifications:
\begin{align}\label{eqn:identif}
M=G\cdot\eta\simeq G,\,\,\,T_{\eta} M=\mathfrak{g}\cdot\eta\simeq \g.
\end{align}

For $b=0$, the flow of $H_0$ is tangent to the manifold of solitons $M$.
This corresponds to the fact that if $u(0,x)\in M$, then
$u(t,x)\in M$ for all $t\in\R$.
More precisely, by equations \eqref{eqn: soliton}
and \eqref{eqn: soliton at t}, we have that if
$u(0,x)= e^{i\phi}\alpha \mu \eta (\mu (x-a))$, then
\begin{align*}
u(t,x)=g(t)\cdot\eta=e^{i\phi(t)}\alpha(t) \mu (t) \eta \big(\mu(t)(x-a(t)\big),
\end{align*}

\noi
where
\begin{align*}
\begin{cases}
\dot{a}(t)=\frac{\alpha ^2\mu}{2}\\
\dot{\alpha}(t)=0\\
\dot{\phi}(t)=-\frac{\alpha ^2\mu^2}{4}\\
\dot{\mu}(t)=0.
\end{cases}
\end{align*}

\section{Effective dynamics}

We will compute in this section the restriction
to the manifold of solitons $M$ of the
symplectic form $\omega|_{M}$
and prove that $(M,\omega|_{M})$ is a symplectic manifold.
Then, we compute the restriction of the Hamiltonian $H_b|_{M}$,
as well as the vector field associated to $H_b|_{M}$.
This vector field yields a flow on the manifold of solitons $M$,
that we refer to as the {\it effective dynamics}.

First we compute $(\omega|_{M})_{\eta}$ on $T_{\eta}M$,
at the point $\eta$. Using
\begin{align*}
\oet(e_i,e_j)=\textup{Im}
\int_{\R}(e_i\cdot\eta)(x)\cj{(e_j\cdot\eta)(x)}dx,
\end{align*}

\noi
and the residue theorem, we get
\begin{align*}
\oet(e_1,e_2)=&-\textup{Im}\int_{\R}
\partial_x(\frac{1}{x+i})\cj{\frac{1}{x+i}}dx=-\frac{\pi}{2},\\
\oet(e_1,e_3)=&0,\,\,\,\,\,\,\,\,\oet(e_1,e_4)=-\frac{\pi}{2},\,\,\,\,\,\oet(e_2,e_3)=-\pi\\
\oet(e_2,e_4)=&0,\,\,\,\,\,\,\,\oet(e_3,e_4)=\frac{\pi}{2}.
\end{align*}

\noi
Hence
\begin{align}\label{eqn:oet}
\oet=\frac{\pi}{2}(d\alpha\wedge da+d\mu\wedge da+
2d\phi\wedge d\alpha +d\phi\wedge d\mu).
\end{align}

Let us now compute $\oget$ for arbitrary $g\in G$.
By \eqref{eqn:identif} we can identify the action of $g$ on $M$
with the action $g:G\to G$ given by \eqref{eqn:g 3}.
Then, we have that the differential $d_{\eta}g:T_{\eta}M\to T_{g\cdot\eta}M$ is
given by
\begin{align}\label{eqn:dg}
d_{\eta}g=\frac{1}{\mu}da+\alpha d\alpha+d\phi+\mu d\mu.
\end{align}

\noi
By equation \eqref{eqn:confsympl}, we have that
\begin{align}\label{eqn:conformal}
\omega_{g\cdot\eta}\big(d_{\eta}g(u),d_{\eta}g(v)\big)=\alpha^2\mu\omega_{\eta}(u,v).
\end{align}

\noi
Then, equations \eqref{eqn:dg}, \eqref{eqn:conformal}, and \eqref{eqn:oet} yield
\begin{align*}
&\oget\Big(X_1(\frac{\partial}{\partial a})_{g\cdot\eta}
+X_2(\frac{\partial}{\partial \alpha})_{g\cdot\eta}
+X_3(\frac{\partial}{\partial \phi})_{g\cdot\eta}
+X_4(\frac{\partial}{\partial \mu})_{g\cdot\eta},\\
&\hphantom{XXXXX}Y_1(\frac{\partial}{\partial a})_{g\cdot\eta}
+Y_2(\frac{\partial}{\partial \alpha})_{g\cdot\eta}
+Y_3(\frac{\partial}{\partial \phi})_{g\cdot\eta}
+Y_4(\frac{\partial}{\partial \mu}\Big)_{g\cdot\eta}\Big)\\
&=\alpha^2\mu\oet\Big(\mu X_1(\frac{\partial}{\partial a})_{\eta}
+\frac{X_2}{\alpha}(\frac{\partial}{\partial \alpha})_{\eta}
+X_3(\frac{\partial}{\partial \phi})_{\eta}+\frac{X_4}{\mu}
(\frac{\partial}{\partial \mu})_{\eta},\\
&\hphantom{XXXXXXXX}\mu Y_1(\frac{\partial}{\partial a})_{\eta}
+\frac{Y_2}{\alpha}(\frac{\partial}{\partial \alpha})_{\eta}
+Y_3(\frac{\partial}{\partial \phi})_{\eta}
+\frac{Y_4}{\mu}(\frac{\partial}{\partial \mu}\Big)_{\eta}\Big)\\
&=\frac{\pi}{2}\alpha^2\mu(\frac{\mu}{\alpha}d\alpha\wedge da+d\mu\wedge da
+\frac{2}{\alpha}d\phi\wedge d\alpha+\frac{1}{\mu}d\phi\wedge d\mu)\\
&\hphantom{XXXXXXXX}\Big(X_1(\frac{\partial}{\partial a})_{\eta}
+X_2(\frac{\partial}{\partial \alpha})_{\eta}
+X_3(\frac{\partial}{\partial \phi})_{\eta}
+X_4(\frac{\partial}{\partial \mu})_{\eta},\\
&\hphantom{XXXXXXXXX}Y_1(\frac{\partial}{\partial a})_{\eta}
+Y_2(\frac{\partial}{\partial \alpha})_{\eta}
+Y_3(\frac{\partial}{\partial \phi})_{\eta}
+Y_4(\frac{\partial}{\partial \mu}\Big)_{\eta}\Big).
\end{align*}

\noi
Thus,
\begin{align}\label{eqn:oget}
\omega|_{M}=\alpha^2\mu\frac{\pi}{2}(\frac{\mu}{\alpha}d\alpha\wedge da
+d\mu\wedge da+ \frac{2}{\alpha}d\phi\wedge d\alpha +\frac{1}{\mu}d\phi\wedge d\mu).
\end{align}

\noi
One can easily verify that $\omega|_{M}$ is a non-degenerate symplectic form
and therefore, $(M,\omega|_{M})$ is a symplectic manifold.

Let $f$ be a function defined on $M\simeq G$. Then, $f$
admits a Hamiltonian vector field $X_f$ on $M$ if

\begin{align*}
\omega|_{M}(\cdot,X_f)=df=f_ada+f_{\alpha}d\alpha+f_{\mu}d\mu+f_{\phi}d\phi,
\end{align*}

\noi
where $f_a=\frac{\partial f}{\partial_a}$ and $f_{\alpha}, f_{\phi}$, and $f_{\mu}$
are defined similarly.
Denoting $X_f=X_1\frac{\partial}{\partial a}+X_2\frac{\partial}
{\partial \alpha}+X_3\frac{\partial}{\partial \phi}+X_4\frac{\partial}{\partial \mu}$
and using \eqref{eqn:oget}, the above equation is equivalent to
\begin{align*}
&\alpha^2\mu\frac{\pi}{2}\Big(\frac{\mu}{\alpha}(X_1d\alpha-X_2da)+(X_1d\mu-X_4da)
+\frac{2}{\alpha}(X_2d\phi-X_3d\alpha)+\frac{1}{\mu}(X_4d\phi-X_3d\mu)\Big)\\
&\hphantom{XXXXXXXXXXX}=f_ada+f_{\alpha}d\alpha+f_{\mu}d\mu+f_{\phi}d\phi.
\end{align*}

\noi
Then, the components of the vector field $X_f$ are
\begin{align*}
\begin{cases}
X_1=-\frac{2}{\alpha^2\mu^2\pi}(-2\mu f_{\mu}+\alpha f_{\alpha}),\\
X_2=\frac{2}{\alpha^2\mu^2\pi}(\alpha f_a+\alpha\mu f_{\phi}),\\
X_3=\frac{2}{\alpha^2\mu\pi}(\mu f_{\mu}-\alpha f_{\alpha}),\\
X_4=-\frac{2}{\alpha^2\mu\pi}(\mu f_{\phi}+2f_{a}).
\end{cases}
\end{align*}

\noi
This allows us to determine the Hamiltonian flow associated to $X_f$, $\dot{u}=X_f(u)$,
which is given by $(\dot{a},\dot{\alpha},\dot{\phi},\dot{\mu})=(X_1,X_2,X_3,X_4)$.

Let us now compute $H_b|_{M}$ and find its Hamiltonian vector field.
\begin{align*}
H_b|_{M}(g\cdot\eta)&=\frac{1}{4}\int_{\R}\alpha^4\mu^4|\eta(\mu(x-a))|^4dx
+\frac{\eps}{2}\int_{\R}b(x)\alpha^2\mu^2|\eta(\mu(x-a))|^2dx\\
&=\frac{\alpha^4\mu^3}{4}\int_{\R}|\eta(x)|^4dx
+\frac{\eps\alpha^2\mu^2}{2}\int_{\R}b(x)|\eta(\mu(x-a))|^2dx\\
&=\frac{\alpha^4\mu^3\pi}{8}+\frac{\eps\alpha^2\mu}{2}
\int_{\R}b\big(a+\frac{x}{\mu}\big)|\eta(x)|^2dx.
\end{align*}

\noi
Taking $f=H_b|_{M}$, we have that
\begin{align*}
\begin{cases}
f_a=\frac{\eps\alpha^2\mu}{2}\int b'(a+\frac{x}{\mu})|\eta(x)|^2dx,\\
f_{\alpha}=\frac{\pi\alpha^3\mu^3}{2}+\eps\alpha\mu
\int b(a+\frac{x}{\mu})|\eta(x)|^2dx,\\
f_{\phi}=0,\\
f_{\mu}=\frac{3\pi\alpha^4\mu^2}{8}+\frac{\eps\alpha^2}{2}
\int b(a+\frac{x}{\mu})|\eta(x)|^2dx-\frac{\eps\alpha^2}{2}
\int b'(a+\frac{x}{\mu})\frac{x}{\mu}|\eta(x)|^2dx.
\end{cases}
\end{align*}

\noi
As above, we determine the components of the Hamiltonian vector field
associated to $f=H_b|_{M}$, and obtain that the flow of $H_b|_{M}$ is given by
\begin{align*}
\begin{cases}
\dot{a}=\frac{\alpha^2\mu}{2}-\frac{2\eps}{\pi\mu}\int b'(a+\frac{x}{\mu})
\frac{x}{\mu}|\eta(x)|^2dx,\\
\dot{\alpha}=\frac{\eps\alpha}{\pi\mu}\int b'(a+\frac{x}{\mu})|\eta(x)|^2dx,\\
\dot{\phi}=-\frac{\alpha^2\mu^2}{4}-\frac{\eps}{\pi}\int b(a+\frac{x}{\mu})
|\eta(x)|^2dx-\frac{\eps}{\pi}\int b'(a+\frac{x}{\mu})\frac{x}{\mu}|\eta(x)|^2dx,\\
\dot{\mu}=-\frac{2\eps}{\pi}\int b'(a+\frac{x}{\mu})|\eta(x)|^2dx.
\end{cases}
\end{align*}

\section{Reparametrized evolution}

Our goal is to show that the flow generated by $H_b$ can be approximated by the
effective flow of $H_b|_{M}$. In order to do so, we decompose the solution $u(t)$
of the Szeg\"o equation with small Toeplitz potential \eqref{PS}, into a component belonging
to $M$ and a component which is symplectically orthogonal to $M$ in the sense that:
\begin{equation}\label{eqn:decomposition}
u(t)=g(t)\cdot (\eta+w(t)),\,\,\,\,\omega (w(t),X\eta)=0, \forall X\in\g.
\end{equation}

\noi
The key point is to prove that the orthogonal component $w$ is small.

Let us show that the above decomposition/reparametrization is indeed possible at least for short time.

\begin{lemma}\label{lemma: def w}
For a compact subset $\Sigma$ of $\R\times\R^{\ast}_+\times \T\times\R^{\ast}_+$
and $\gamma>0$, denote by
\begin{equation*}
U_{\Sigma,\gamma}=\Big\{u\in H^{\frac{1}{2}_+}; \inf_{g\in\Sigma}\|u-g\cdot\eta\|_{H^{\frac{1}{2}}_+}<\gamma\Big\}.
\end{equation*}

\noi
a $\gamma$-tubular neighborhood of $\Sigma$.

There exists $\gamma_0=\gamma_0(\Sigma)$ such that if $u\in U_{\Sigma,\gamma}$, with $\gamma\leq\gamma_0$,
then there exists a unique element $g(u)\in\Sigma$ with the property
\begin{equation*}
\omega(g(u)^{-1}\cdot u-\eta,X\cdot\eta)=0, \forall X\in\g.
\end{equation*}

\end{lemma}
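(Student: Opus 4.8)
The plan is to obtain $g(u)$ by an application of the implicit function theorem; the only ingredient that is not entirely routine is the non-degeneracy of $\omega|_M$, which was already established in Section 3. Identifying $\g\simeq\R^4$ via the basis $e_1,e_2,e_3,e_4$, I would introduce the map
\begin{equation*}
F:\big(\R\times\R^{\ast}_+\times\T\times\R^{\ast}_+\big)\times H^{\frac{1}{2}}_+\longrightarrow\R^4,\qquad F(g,u)=\Big(\,\omega\big(g^{-1}\cdot u-\eta,\ e_j\cdot\eta\big)\,\Big)_{j=1}^{4}.
\end{equation*}
Using the conformal symplecticity \eqref{eqn:confsympl} one can rewrite $\omega(g^{-1}\cdot u,e_j\cdot\eta)=\tfrac{1}{\alpha(g)^2\mu(g)}\,\omega\big(u,\,g\cdot(e_j\cdot\eta)\big)$, which makes it transparent that, since each $e_j\cdot\eta\in H^{\frac{1}{2}}_+$ and $g\mapsto g\cdot(e_j\cdot\eta)$ is smooth, $F$ is smooth jointly in $(g,u)$, with $\partial_uF(g,u)$ bounded. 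The assertion of the lemma is precisely that for $u$ in a small enough tubular neighborhood of $\Sigma\cdot\eta$ the equation $F(g,u)=0$ has a unique solution $g=g(u)$ in $\Sigma$.

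First I would fix $g_0\in\Sigma$ and verify the hypotheses of the implicit function theorem at $(g_0,g_0\cdot\eta)$. Since $g_0^{-1}\cdot(g_0\cdot\eta)=\eta$, we have $F(g_0,g_0\cdot\eta)=0$. To compute $\partial_gF$ at this point, write a nearby parameter as $g=g_0h$ with $h$ near the identity; then $g^{-1}\cdot(g_0\cdot\eta)=h^{-1}\cdot\eta$, and differentiating along a curve with $\dot h(0)=X\in\g$ gives, as for the generators computed in Section 2, $\tfrac{d}{ds}\big|_{0}\,h(s)^{-1}\cdot\eta=-X\cdot\eta$. Hence for $X=\sum_iX_ie_i$,
\begin{equation*}
\partial_gF(g_0,g_0\cdot\eta)[X]=-\Big(\,\textstyle\sum_i X_i\,\omega(e_i\cdot\eta,e_j\cdot\eta)\,\Big)_{j=1}^4 ,
\end{equation*}
so that, in the basis $(e_i)$, this linear map is (up to sign, and up to the fixed isomorphism induced by left translation by $g_0$) the Gram matrix $\big(\oet(e_i,e_j)\big)_{i,j}$ of $\oet$ on $T_\eta M$. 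By the computation \eqref{eqn:oet} and the fact that $(M,\omega|_M)$ is symplectic, this matrix is invertible. The implicit function theorem therefore provides a neighborhood $V_{g_0}\ni g_0$, a radius $r_{g_0}>0$, and a $C^1$ map $u\mapsto g(u)$ on $\{\,\|u-g_0\cdot\eta\|_{H^{\frac{1}{2}}_+}<r_{g_0}\,\}$ such that $g(u)$ is the unique element of $V_{g_0}$ with $F(g(u),u)=0$, and $g(g_0\cdot\eta)=g_0$.

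The remaining task is to make this local statement uniform in $g_0$ over the compact set $\Sigma$, and this is the step I expect to require the most care. The quantitative constants coming out of the implicit function theorem — essentially $\|(\partial_gF(g_0,g_0\cdot\eta))^{-1}\|$ together with a modulus of continuity for $\partial_gF$ near the orbit — depend continuously on $g_0$ and are therefore bounded uniformly on $\Sigma$; this lets me choose the radii $r_{g_0}$ and the neighborhoods $V_{g_0}$ bounded below independently of $g_0$, and also furnishes a Lipschitz-type bound of the form $d(g,g_0)\le C\|u-g_0\cdot\eta\|_{H^{\frac{1}{2}}_+}$ valid for any solution $g$ of $F(g,u)=0$ lying in a fixed slightly larger neighborhood of $g_0$. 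I would then cover $\Sigma$ by finitely many $V_{g_0^{(k)}}$ and take $\gamma_0$ to be a sufficiently small positive number (no larger than $\min_k r_{g_0^{(k)}}$ and smaller than a fixed multiple of the Lebesgue number of the cover). For $u\in U_{\Sigma,\gamma}$ with $\gamma\le\gamma_0$, pick $g_\ast\in\Sigma$ with $\|u-g_\ast\cdot\eta\|_{H^{\frac{1}{2}}_+}<\gamma$: existence of a solution $g(u)$ near $g_\ast$ follows from the local statement, while the Lipschitz bound forces \emph{every} solution $g\in\Sigma$ to lie within $C\gamma_0$ of $g_\ast$, hence inside a single $V_{g_0^{(k)}}$, where local uniqueness applies. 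This yields the existence and uniqueness of $g(u)\in\Sigma$ claimed in the lemma. Thus the heart of the argument is simply the non-degeneracy of $\oet$ from Section 3, and the only genuine obstacle is the bookkeeping needed to upgrade local uniqueness to uniqueness throughout all of $\Sigma$, which the compactness of $\Sigma$ together with the uniform (quantitative) implicit function theorem resolves.
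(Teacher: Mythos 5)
Your proof takes essentially the same route as the paper: both apply the implicit function theorem to $F(g,u)=\omega(g^{-1}\cdot u-\eta,\,\cdot\,\eta)$ and reduce invertibility of $\partial_gF$ at $(g_0,g_0\cdot\eta)$ to the non-degeneracy of $(\omega|_M)_\eta$, whose Gram matrix in the basis $\{e_j\cdot\eta\}$ was computed in Section 3. You spell out the uniformity-over-$\Sigma$ bookkeeping (covering argument, Lipschitz bound, Lebesgue number) more explicitly than the paper does, but the underlying argument is identical.
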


\begin{proof}
Consider the function $F:H^{\frac{1}{2}}_+\times G\to\g^{\ast}$,
\begin{equation*}
F(u,h)(X)=\omega (h\cdot u-\eta,X\cdot\eta).
\end{equation*}

\noi
We want to solve $F(u,h)=0$ for $h=h(u)$.
We verify that the function $F$ satisfies the hypotheses of the
Implicit Function Theorem:\\
(i) $F(u,h)$ is of class $C^1$ in $h$,\\
(ii) $F(g\cdot\eta,g^{-1})=0$ for all $g\in G$,\\
(iii) $d_hF(g\cdot \eta, g^{-1}):T_{g^{-1}}G\to\g^{\ast}$ is invertible
for all $g\in G$.

The first two properties can be checked directly.
As for the third property, it is enough to check it for $g=e=(1,0,1,0)$,
the unity of the group $G$. Thus, since $T_eG=\g$, it is enough to check that
$d_hF(\eta,e):\g\to\g^{\ast}$ is invertible. But \\ $d_hF(\eta,e)=(\omega|_{M})_{\eta}$
which is non-degenerate because, in the basis $\{e_j\cdot\eta\}_{j=1}^4$
of $\g$, it writes
\begin{align*}
\frac{\pi}{2}\left(
\begin{matrix}
0 & -1 & 0 & -1\\
1 & 0 & -2 & 0\\
0 & 2 & 0 & 1\\
1 & 0 & -1 & 0
\end{matrix}
\right),
\end{align*}

\noi
whose determinant does not vanish.
\end{proof}

Thus, the orthogonal decomposition \eqref{eqn:decomposition}, with
$w(t)=g(t)^{-1}\cdot u(t)-\eta$, holds as long as $u(t)$ is close enough to $M=G\cdot\eta$.

In order to find the equation that $w$ satisfies, we need
the following lemmas:

\begin{lemma}\label{lemma: gY}
If $t\mapsto g(t)=(a(t),\alpha (t),\phi(t),\mu(t))$ is a $C^1$
function and \\$u\in \cup_{N\in\N}\M(N)$, then
\begin{align*}
\frac{d}{dt}g(t)\cdot u=g(t)\cdot(Y(t)u),
\end{align*}

where $Y(t)=\dot{a}(t)\mu(t) e_1+\frac{\dot{\alpha}(t)}{\alpha (t)}e_2
+\dot{\phi}(t)e_3+\frac{\dot{\mu}(t)}{\mu (t)}e_4$.
\end{lemma}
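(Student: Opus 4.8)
The plan is to differentiate the action $g(t)\cdot u$ directly, using the explicit formula $g\cdot u(x)=e^{i\phi}\alpha\mu\, u(\mu(x-a))$, and then recognize the result as $g(t)$ applied to a first-order differential operator acting on $u$. First I would write out
\[
\frac{d}{dt}\big[g(t)\cdot u\big](x)=\frac{d}{dt}\Big[e^{i\phi(t)}\alpha(t)\mu(t)\,u\big(\mu(t)(x-a(t))\big)\Big],
\]
and apply the product and chain rules. This produces four terms, one for each parameter: differentiating $e^{i\phi}$ gives $i\dot\phi$ times $g\cdot u$; differentiating $\alpha$ gives $\frac{\dot\alpha}{\alpha}$ times $g\cdot u$; differentiating the $\mu$ prefactor gives $\frac{\dot\mu}{\mu}$ times $g\cdot u$; and differentiating the arguments of $u$ (both the $\mu$ inside and the $-\mu a$ shift) produces the terms involving $u'$.

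The second step is to collect the derivative-in-$x$ terms. Writing $y=\mu(t)(x-a(t))$, the chain rule gives $\partial_t u(y)=u'(y)\,(\dot\mu(x-a)-\mu\dot a)=u'(y)\big(\tfrac{\dot\mu}{\mu}y-\mu\dot a\big)$ after substituting $x-a=y/\mu$. Thus the contribution is $e^{i\phi}\alpha\mu\big(\tfrac{\dot\mu}{\mu}y\,u'(y)-\mu\dot a\,u'(y)\big)$. Now I recall the generators $e_1=-\partial_x$ and $e_4=\partial_x\cdot x$ (so $(e_4 u)(y)=\partial_y(y\,u(y))=u(y)+y\,u'(y)$), and observe that $g\cdot(e_1 u)(x)=e^{i\phi}\alpha\mu\,(-u')(y)$ and $g\cdot(e_4 u)(x)=e^{i\phi}\alpha\mu\,(u(y)+y\,u'(y))$. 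Matching coefficients, the $-\mu\dot a\,u'$ term is exactly $\dot a\mu\cdot g\cdot(e_1 u)$, while $\tfrac{\dot\mu}{\mu}y\,u'$ combines with part of the $\tfrac{\dot\mu}{\mu}$-prefactor term to give $\tfrac{\dot\mu}{\mu}\cdot g\cdot(e_4 u)$: indeed $\tfrac{\dot\mu}{\mu}(e_4 u)=\tfrac{\dot\mu}{\mu}(u+y u')$, and the $\tfrac{\dot\mu}{\mu}u$ piece absorbs the leftover prefactor term. Assembling everything, $\frac{d}{dt}g(t)\cdot u=g(t)\cdot\big(\dot a\mu\,e_1 u+\tfrac{\dot\alpha}{\alpha}e_2 u+\dot\phi\,e_3 u+\tfrac{\dot\mu}{\mu}e_4 u\big)$, which is precisely $g(t)\cdot(Y(t)u)$ with $Y(t)$ as stated, since $e_2=1$ and $e_3=i$.

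There is essentially no hard analytic obstacle here; the assumption $u\in\cup_{N\in\N}\M(N)$ guarantees $u$ is a rational function smooth on $\R$ with enough decay, so all the pointwise differentiations are legitimate and the chain/product rules apply without qualification (one can also note $g(t)\cdot u$ stays in $\cup_N\M(N)$). The only point requiring care — and the step most likely to trip up the bookkeeping — is correctly keeping track of the two separate ways $\mu$ enters (as the amplitude factor $\mu$ and as the scaling $\mu$ inside $u$), so that the $u$-without-derivative pieces recombine cleanly into the $e_4$ term via the identity $e_4 u=u+x u'$ rather than leaving a spurious residual multiple of $u$. Once that cancellation is identified the proof is a short direct computation.
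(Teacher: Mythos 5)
Your proof is correct and follows essentially the same route as the paper: differentiate $g(t)\cdot u=e^{i\phi}\alpha\mu\,u(\mu(x-a))$ directly via the product and chain rules, then regroup the resulting terms using $e_1=-\partial_x$, $e_2=1$, $e_3=i$, $e_4=\partial_x\cdot x$, with the only delicate point being the recombination of the two $\dot\mu$-contributions into $\tfrac{\dot\mu}{\mu}\,g\cdot(e_4 u)$, which you handle correctly.
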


\begin{proof}
\begin{align*}
\frac{d}{dt}g(t)\cdot u=&\frac{d}{dt}(e^{i\phi}\alpha\mu u(\mu (x-a)))\\
=&i\dot{\phi}e^{i\phi}\alpha\mu u(\mu(x-a))
+e^{i\phi}\dot{\alpha}\mu u(\mu(x-a))+e^{i\phi}\alpha\dot{\mu} u(\mu (x-a))\\
&+e^{i\phi}\alpha\mu \partial _xu(\mu (x-a))\dot{\mu}x
-e^{i\phi}\alpha\mu \partial_x u(\mu (x-a))(\dot{\mu}a+\mu\dot{a})\\
=&\dot{\phi}g\cdot (e_3\cdot u)+\frac{\dot{\alpha}}{\alpha}g\cdot (e_2\cdot u)
+\frac{\dot{\mu}}{\mu}g\cdot (e_4\cdot u)+\dot{a}\mu g\cdot (e_1\cdot u)\\
=&g\cdot (Y(t)u).
\end{align*}
\end{proof}

We also need Lemma 2.1 from \cite{Zworski}, that we restate in the context of our problem.

\begin{lemma}\label{lemma:Xgast}
Suppose that $g:H^{\frac{1}{2}}_+\to H^{\frac{1}{2}}_+$ is a diffeomorphism such
that \\ $g^{\ast}\omega=\rho(g)\omega$, where
$\rho(g)\in C^{\infty}(H^{\frac{1}{2}}_+,\R^{\ast})$.
Then, for $f\in C^{\infty}(H^{\frac{1}{2}}_+,\R)$ we have that
\begin{align*}
(g^{-1})_{\ast}X_f(g(\rho))=\frac{1}{\rho(g)}X_{g^{\ast}f}(\rho),
\rho\in H^{\frac{1}{2}}_+.
\end{align*}

\end{lemma}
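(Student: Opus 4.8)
\emph{Proof proposal.} The plan is to verify the identity by directly unwinding the definition of the Hamiltonian vector field and using the conformal relation $g^{\ast}\omega=\rho(g)\omega$; since $H^{1/2}_+$ is a vector space and $\omega$ is a constant-coefficient $2$-form, this relation simply says $\omega\big(d_{\rho}g(h),d_{\rho}g(k)\big)=\rho(g)(\rho)\,\omega(h,k)$ for all tangent vectors $h,k$ (writing $\rho(g)(\rho)$ for the value at the point $\rho$ of the function $\rho(g)$). Recall that $X_{g^{\ast}f}$ is characterized by $d_{\rho}(g^{\ast}f)(h)=\omega\big(h,X_{g^{\ast}f}(\rho)\big)$ for all $h$, and likewise $d_{u}f(k)=\omega\big(k,X_f(u)\big)$. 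So I would begin with the chain rule, $d_{\rho}(g^{\ast}f)(h)=d_{\rho}(f\circ g)(h)=d_{g(\rho)}f\big(d_{\rho}g(h)\big)$, and then apply the defining property of $X_f$ at the point $g(\rho)$ to get
\begin{equation*}
d_{\rho}(g^{\ast}f)(h)=\omega\big(d_{\rho}g(h),X_f(g(\rho))\big).
\end{equation*}

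Next, since $g$ is a diffeomorphism, $d_{\rho}g$ is a linear isomorphism of tangent spaces, so I write $X_f(g(\rho))=d_{\rho}g(v)$ where $v:=(d_{\rho}g)^{-1}\big(X_f(g(\rho))\big)$; this $v$ is exactly the vector $(g^{-1})_{\ast}X_f(g(\rho))$ appearing on the left of the asserted identity, i.e.\ the image of the tangent vector $X_f(g(\rho))$ under $d(g^{-1})=(d_{\rho}g)^{-1}$. Substituting and invoking the conformal relation,
\begin{equation*}
\omega\big(d_{\rho}g(h),X_f(g(\rho))\big)=\omega\big(d_{\rho}g(h),d_{\rho}g(v)\big)=\rho(g)(\rho)\,\omega(h,v)=\omega\big(h,\rho(g)(\rho)\,v\big).
\end{equation*}
Comparing with the characterization of $X_{g^{\ast}f}$ and using the non-degeneracy of $\omega$, we conclude $X_{g^{\ast}f}(\rho)=\rho(g)(\rho)\,v$, i.e.\ $v=\tfrac{1}{\rho(g)(\rho)}X_{g^{\ast}f}(\rho)$, which is the claimed formula.

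The only delicate points here are functional-analytic rather than conceptual: Hamiltonian vector fields are a priori defined only on the dense subset $\mathcal{D}$ (here $\cup_{N\in\N}\M(N)$), so one should note that $g$ preserves this subset and that $d_{\rho}g$ restricts to an isomorphism there. In the situation where the lemma is actually applied $g$ is the action of a group element of $G$, for which both facts are immediate, and moreover $\rho(g)=\alpha^{2}(g)\mu(g)$ is a \emph{constant}, so there is no ambiguity in reading $g^{\ast}\omega=\rho(g)\omega$ pointwise. Apart from this, the argument is pure bookkeeping of pullbacks and pushforwards, so I do not anticipate a genuine obstacle; the one thing to keep straight is the placement of base points ($g(\rho)$ versus $\rho$) in the chain-rule step and in the interpretation of the pushforward on the left-hand side.
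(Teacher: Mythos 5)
Your proof is correct. Note that the paper does not actually supply a proof of this lemma; it cites it verbatim as Lemma 2.1 of Holmer--Zworski and uses it as a black box. Your argument is the standard one: the chain rule gives $d_{\rho}(g^{\ast}f)(h)=\omega\big(d_{\rho}g(h),X_f(g(\rho))\big)$, and then writing $X_f(g(\rho))=d_{\rho}g(v)$ with $v=(g^{-1})_{\ast}X_f(g(\rho))$ and invoking the conformal relation $g^{\ast}\omega=\rho(g)\omega$ converts this into $\omega\big(h,\rho(g)(\rho)v\big)$, which by non-degeneracy of $\omega$ identifies $\rho(g)(\rho)v$ with $X_{g^{\ast}f}(\rho)$. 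You are also right to flag (i) the notational collision between the conformal factor $\rho(g)$ and the base point $\rho$, which you resolve cleanly, and (ii) the domain issue: Hamiltonian vector fields live on the dense subset $\mathcal{D}=\cup_N\M(N)$, and one should observe that the group action preserves it and that $d_{\rho}g$ restricts to an isomorphism there -- both of which are immediate for $g\in G$, where $\rho(g)=\alpha^{2}(g)\mu(g)$ is a constant. This matches the intended argument in the cited source.
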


In the next proposition we determine the equation satisfied by $w$.
\begin{proposition}\label{Prop eqn w}
If the solution of the perturbed Szeg\"o equation \eqref{Szego T} can be
reparametrized as in Lemma \ref{lemma: def w}, $u(t)=g(t)\cdot (\eta+w(t))$, for all $t$ in an interval
$(t_1,t_2)$, then $w$ satisfies the following equation:

\begin{align*}
\partial_t w&=-X\eta+\Big(-i\eps\Pi\big(b(a+\frac{x}{\mu})\eta\big)
+2B e_1\cdot\eta-C e_2\cdot\eta
+(A+B)e_3\cdot\eta+2C e_4\cdot\eta\Big)\\
&-Xw+\Big(-i\eps\Pi\big(b(a+\frac{x}{\mu})w\big)
+2B e_1\cdot w-C e_2\cdot w
+(A+B)e_3\cdot w+2C e_4\cdot w\Big)\\
&+i\alpha^2\mu^2\mathcal{L}w-i\alpha^2\mu^2\mathcal{N}w,
\end{align*}

\noi
where
\begin{align}\label{eqn:X}
&X:=\big(\dot{a}\mu-\frac{\alpha^2\mu^2}{2}+2B\big)e_1
+\big(\frac{\dot{\alpha}}{\alpha}-C\big)e_2+\big(\dot{\phi}
+\frac{\alpha^2\mu^2}{4}+A+B\big)e_3\\
&\hphantom{XXXXXX}+
\big(\frac{\dot{\mu}}{\mu}+2C\big)e_4,\notag\\
&\mathcal{L}w:=-\frac{i}{2}\partial_xw-2T_{|\eta|^2}w-H_{\eta^2}w+\frac{1}{4}w,\notag\\
&\Nn w:=\Pi\big(|w|^2w+|w|^2\eta+2w\textup{Re}(\eta\bar{w})\big),\notag\\
&A:=\frac{\eps}{\pi}\int b(a+\frac{x}{\mu})|\eta(x)|^2dx,\notag\\
&B:=\frac{\eps}{\pi}\int b'(a+\frac{x}{\mu})x|\eta(x)|^2\frac{dx}{\mu},\notag\\
&C:=\frac{\eps}{\pi}\int b'(a+\frac{x}{\mu})|\eta(x)|^2\frac{dx}{\mu}.\notag
\end{align}

\end{proposition}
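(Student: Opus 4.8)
The plan is to differentiate the decomposition $u(t)=g(t)\cdot(\eta+w(t))$ in time and to re-express everything as an element of $\g\cdot(\eta+w)$ plus genuinely nonlinear pieces. First I would apply Lemma \ref{lemma: gY} to write $\partial_t\big(g\cdot(\eta+w)\big)=g\cdot\big(Y(\eta+w)\big)+g\cdot\partial_t w$, where $Y=\dot a\mu\, e_1+\tfrac{\dot\alpha}{\alpha}e_2+\dot\phi\, e_3+\tfrac{\dot\mu}{\mu}e_4$. On the other hand, $u$ solves \eqref{Szego T}, so $\partial_t u=-i\Pi(|u|^2u)-i\eps T_b u$. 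The next step is to push the right-hand side back through the group action: since $g^{*}\omega=\alpha^2\mu\,\omega$ is conformally symplectic and $X_{H}(u)=-i\Pi(|u|^2u)$, Lemma \ref{lemma:Xgast} gives $g^{-1}_{*}\big(-i\Pi(|u|^2u)\big)=\tfrac{1}{\alpha^2\mu}X_{g^{*}H}(\eta+w)$. A direct computation of $g^{*}H$ on $\eta+w$ together with the scaling identities for $e_1,\dots,e_4$ reduces the cubic term to $-i\alpha^2\mu^2\Pi\big(|\eta+w|^2(\eta+w)\big)$ written in the rescaled variable; expanding $|\eta+w|^2(\eta+w)=|\eta|^2\eta+\big(2\eta\,\textup{Re}(\eta\bar w)+|\eta|^2 w\big)+\Nn w$ isolates the cubic term in $\eta$, the linear-in-$w$ term, and the remainder $\Nn w$. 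For the Toeplitz potential, I would similarly transport $-i\eps T_b u=-i\eps\Pi(bu)$ through $g^{-1}$, using the change of variables $y=\mu(x-a)$, to get $-i\eps\Pi\big(b(a+\tfrac{x}{\mu})(\eta+w)\big)$ acting on $\eta+w$ in rescaled coordinates.

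Collecting terms, $\partial_t w$ equals a sum of: (a) the transported cubic term $-i\alpha^2\mu^2\Pi(|\eta|^2\eta)$, (b) the linear-in-$\eta$ part of the transported potential $-i\eps\Pi(b(a+\tfrac{x}{\mu})\eta)$, (c) the linear-in-$w$ contributions $-i\alpha^2\mu^2\big(2T_{|\eta|^2}w+H_{\eta^2}w\big)$ plus $-i\eps\Pi(b(a+\tfrac{x}{\mu})w)$, (d) the nonlinear remainder $-i\alpha^2\mu^2\Nn w$, and (e) minus $Y(\eta+w)$. The main bookkeeping step is then to use the critical point identity \eqref{eqn:eta}, i.e.\ $\tfrac{i}{2}\partial_x\eta+\Pi(|\eta|^2\eta)-\tfrac{\eta}{4}=0$, rewritten via $e_1=-\partial_x$ as $-i\alpha^2\mu^2\Pi(|\eta|^2\eta)=\tfrac{\alpha^2\mu^2}{2}e_1\cdot\eta+\tfrac{i\alpha^2\mu^2}{4}\eta$. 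This exactly produces the $\tfrac{\alpha^2\mu^2}{2}e_1$ and $\tfrac{\alpha^2\mu^2}{4}e_3$ terms appearing in the definition \eqref{eqn:X} of $X$, so that the ``effective drift'' part of $-Y(\eta)$ combines with (a) into $-X\eta$ plus the leftover potential terms. For the linear-in-$w$ piece, the same identity applied with $w$ in place of $\eta$ is \emph{not} available, but instead one recognizes $-\tfrac{i}{2}\partial_x w-2T_{|\eta|^2}w-H_{\eta^2}w+\tfrac14 w=\mathcal{L}w$, and writing $-i\alpha^2\mu^2\big(2T_{|\eta|^2}w+H_{\eta^2}w\big)=i\alpha^2\mu^2\mathcal{L}w-i\alpha^2\mu^2\big(-\tfrac{i}{2}\partial_x w+\tfrac14 w\big)$ and re-expressing $-\tfrac{i}{2}\partial_x w=\tfrac{i}{2}e_1\cdot w$ feeds the remaining pieces into $-Xw$.

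The reason the constants $A,B,C$ appear is that $-Y\cdot\eta$ and $-Y\cdot w$ must be split into the ``effective'' part (which is what survives in $X$ when $\dot a,\dot\alpha,\dot\phi,\dot\mu$ follow the effective dynamics) and an explicit correction coming from the Toeplitz potential restricted to $M$; matching this against the computation of $H_b|_M$ and its Hamiltonian vector field from Section 3 shows that the potential contributes precisely $2B e_1-C e_2+(A+B)e_3+2C e_4$ acting on $\eta$ (and identically on $w$), with $A,B,C$ as defined. So the final step is to regroup: the terms proportional to $e_j\cdot\eta$ that are \emph{not} absorbed into $X\eta$ are collected as the explicit bracket $\big(-i\eps\Pi(b(a+\tfrac{x}{\mu})\eta)+2Be_1\cdot\eta-Ce_2\cdot\eta+(A+B)e_3\cdot\eta+2Ce_4\cdot\eta\big)$, the analogous $e_j\cdot w$ terms into the corresponding bracket with $w$, and what remains is exactly $-X\eta-Xw+i\alpha^2\mu^2\mathcal{L}w-i\alpha^2\mu^2\Nn w$, as claimed. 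I expect the main obstacle to be the careful tracking of the scaling factors $\mu$, $\alpha$ through the conformal symplectic action in Lemma \ref{lemma:Xgast} and through the change of variables in the Toeplitz term — getting the powers of $\alpha^2\mu^2$ versus $\alpha^2\mu$ exactly right, and verifying that the constants produced by $g^{*}(H_b)$ genuinely coincide with $A,B,C$; the algebraic regrouping afterwards is routine once \eqref{eqn:eta} is invoked.
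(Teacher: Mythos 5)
Your proposal follows essentially the same route as the paper: differentiate the decomposition $u=g\cdot(\eta+w)$ via Lemma \ref{lemma: gY}, transport the Hamiltonian vector field through $g^{-1}$ via Lemma \ref{lemma:Xgast} (using the conformal symplectic factor $\alpha^2\mu$), expand $|\eta+w|^2(\eta+w)$ into cubic-in-$\eta$, linear-in-$w$, and $\mathcal N w$ pieces, then regroup by adding and subtracting the $-\frac{\alpha^2\mu^2}{2}e_1+\frac{\alpha^2\mu^2}{4}e_3$ part of $X-Y$ so that the cubic-in-$\eta$ term cancels via \eqref{eqn:eta} and the linear-in-$w$ term produces $i\alpha^2\mu^2\mathcal Lw$.

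One small slip: the displayed identity should read
\[
-i\alpha^2\mu^2\,\Pi(|\eta|^2\eta)=\tfrac{\alpha^2\mu^2}{2}\,e_1\cdot\eta-\tfrac{i\alpha^2\mu^2}{4}\,\eta
=\tfrac{\alpha^2\mu^2}{2}\,e_1\cdot\eta-\tfrac{\alpha^2\mu^2}{4}\,e_3\cdot\eta,
\]
i.e.\ the $\eta$-term carries a minus sign, since $-i\Pi(|\eta|^2\eta)=-\tfrac12\partial_x\eta-\tfrac{i}{4}\eta$ by \eqref{eqn:eta}. With your $+\tfrac{i\alpha^2\mu^2}{4}\eta$ the cancellation against the $\frac{\alpha^2\mu^2}{4}e_3\cdot\eta$ piece of $-(X-Y)\eta$ fails; with the corrected sign it works exactly as in the paper. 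This appears to be a transcription error rather than a conceptual gap, since your invocation of \eqref{eqn:eta} and the subsequent regrouping are both correct.
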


\begin{proof}
Denote $\tilde{u}=w+\eta=g^{-1}u$. Then, by Lemma \ref{lemma: gY}, we have that
\begin{align*}
\partial_t u=\partial_t(g\cdot(\eta+w))=g\cdot Y(\eta+w)+g\cdot \partial_t w.
\end{align*}

\noi
Then, Lemma \ref{lemma:Xgast} yields
\begin{align*}
\partial_t w&=-Y(\eta+w)+g^{-1} \partial_t u=-Y(\eta+w)+g^{-1}X_{H_{b}}(u)
=-Y(\eta+w)+g^{-1}X_{H_{b}}(g\tilde{u})\\
&=-Y(\eta+w)+\frac{1}{\alpha^2g}X_{g^{\ast}H_b}(\tilde{u}).
\end{align*}

\noi
Since
\begin{align*}
(g^{\ast}H_b)(\tilde{u})=H_b(g\tilde{u})=\frac{\alpha^4\mu^3}{4}\int |\tilde{u}|^4dx
+\frac{\eps\alpha^2\mu}{2}\int b\big(a+\frac{x}{\mu}\big)|\tilde{u}|^2dx,
\end{align*}

\noi
we have that
\begin{align*}
(X_{g^{\ast}H_b})(\tilde{u})=-i\Pi \Big(\alpha^4\mu^3|\tilde{u}|^2\tilde{u}
+\eps\alpha^2\mu b(a+\frac{x}{\mu})\tilde{u}\Big)
\end{align*}

\noi
and therefore,
\begin{align}\label{eqn w inter}
\partial_t w=&-Y(\eta+w)-\frac{i}{\alpha^2\mu}\Pi\Big(\alpha^4\mu^3|\eta+w|^2(\eta+w)
+\eps\alpha^2\mu b\big(a+\frac{x}{\mu}\big)(\eta+w)\Big)\\
=&\Big(-Y\eta-i\eps\Pi(b(a+\frac{x}{\mu})\eta)\Big)+\Big(-Yw-i\eps\Pi(b(a+\frac{x}{\mu})w)\Big)\notag\\
&-i\alpha^2\mu^2\Pi\Big(2\textup{Re}(\eta\bar{w})\eta+|\eta|^2w\Big)\notag\\
&-i\alpha^2\mu^2\Pi\Big(|w|^2w+2\textup{Re}(\eta\bar{w})w+|w|^2\eta\Big)
-i\alpha^2\mu^2\Pi(|\eta|^2\eta)\notag.
\end{align}

\noi
Denoting
\begin{align}\label{some eqn for X}
X=Y+\big(-\frac{\alpha^2\mu^2}{2}+2B\big)e_1-Ce_2
+\big(\frac{\alpha^2\mu^2}{4}+A+B\big)e_3+2Ce_4,
\end{align}

\noi
and noticing that
\begin{align*}
-\frac{\alpha^2\mu^2}{2}e_1\cdot\eta=\frac{\alpha^2\mu^2}{2}\partial_x\eta,
\,\,\,\,\frac{\alpha^4\mu^3}{4}e_3\cdot\eta=\frac{i}{4}\alpha^4\mu^3\eta,
\end{align*}

\noi
and similar relations hold for $w$,
we obtain
\begin{align*}
\partial_t w=&-X\eta+\Big(-i\eps\Pi\big(b(a+\frac{x}{\mu})\eta\big)
+2Be_1\cdot\eta-C e_2\cdot \eta
+(A+B)e_3\cdot\eta+2C e_4\cdot\eta \Big)\\
&-Xw+\Big(-i\eps\Pi\big(b(a+\frac{x}{\mu})w\big)+2Be_1\cdot w-C e_2\cdot w
+(A+B)e_3\cdot w+2C e_4\cdot w\Big)\\
&-i\alpha^2\mu^2\Big(\Pi\big(2|\eta|^2w+\eta^2\bar{w}\big)+\frac{i}{2}\partial_x w
-\frac{w}{4}\Big)-i\alpha^2\mu^2\Big(\Pi(|\eta|^2\eta)
+\frac{i}{2}\partial_x \eta-\frac{\eta}{4}\Big)\\
&-i\alpha^2\mu^2\Pi\Big(|w|^2w+2\textup{Re}(\eta\bar{w})w+|w|^2\eta\Big).
\end{align*}

\noi
Equation \eqref{eqn:eta} and \eqref{eqn:linearized} yield the conclusion.
\end{proof}

\begin{remark}\label{Rk}
\rm
Notice that
$X\equiv 0$ is equivalent to $a,\alpha,\phi,\mu$
satisfying the effective dynamics \eqref{effective dynamics}.
\end{remark}

\begin{lemma}\label{lemma:alpha2mu}
If the solution of the perturbed Szeg\"o equation \eqref{Szego T} can be
reparametrized as in Lemma \ref{lemma: def w}, $u(t)=g(t)\cdot (\eta+w(t))$
at time $t$, then the $L^2$-norm of $w(t)$ is equal to
\begin{align*}
\|w(t)\|_{L^2}^2=\pi\Big(\frac{\alpha_0^2\mu_0}{\alpha^2(t)\mu(t)}-1\Big).
\end{align*}

\noi
Consequently,
$\alpha^2(t)\mu(t)\leq \alpha^2_0\mu_0$.
\end{lemma}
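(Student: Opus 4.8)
The plan is to derive the identity from the conservation of the mass $Q(u)=\int_{\R}|u|^2\,dx$ along the perturbed flow \eqref{PS} — which holds precisely because $b$ is real-valued, as noted in the introduction — combined with the orthogonality constraint built into the reparametrization of Lemma \ref{lemma: def w}.

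First I would rewrite $\|u(t)\|_{L^2}$ in terms of $w(t)$. Writing $u(t)=g(t)\cdot(\eta+w(t))$ with $g(t)=(a(t),\alpha(t),\phi(t),\mu(t))$ and performing the change of variables $y=\mu(t)(x-a(t))$, exactly as in the computation \eqref{eqn:confsympl} of the conformal factor, gives
\[\|u(t)\|_{L^2}^2=\alpha^2(t)\mu^2(t)\int_{\R}\big|(\eta+w(t))(\mu(t)(x-a(t)))\big|^2\,dx=\alpha^2(t)\mu(t)\,\|\eta+w(t)\|_{L^2}^2.\]
Next I would invoke the orthogonality relation $\omega(w(t),X\cdot\eta)=0$ from \eqref{eqn:decomposition} for the single direction $X=e_3=i$, so that $e_3\cdot\eta=i\eta$ and
\[0=\omega(w(t),i\eta)=\textup{Im}\int_{\R}w(t)\,\overline{i\eta}\,dx=-\textup{Re}\int_{\R}w(t)\bar{\eta}\,dx=-\jb{w(t),\eta}.\]
Hence $\jb{w(t),\eta}=0$, and expanding the square while using $\|\eta\|_{L^2}^2=\int_{\R}\frac{dx}{x^2+1}=\pi$ yields
\[\|\eta+w(t)\|_{L^2}^2=\|\eta\|_{L^2}^2+2\jb{\eta,w(t)}+\|w(t)\|_{L^2}^2=\pi+\|w(t)\|_{L^2}^2.\]

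Finally I would evaluate at $t=0$: since $w(0)=0$ and $g(0)=(a_0,\alpha_0,\phi_0,\mu_0)$ we get $\|u(0)\|_{L^2}^2=\pi\alpha_0^2\mu_0$, and conservation of $Q$ gives $\alpha^2(t)\mu(t)\big(\pi+\|w(t)\|_{L^2}^2\big)=\pi\alpha_0^2\mu_0$. Solving for $\|w(t)\|_{L^2}^2$ produces the stated formula, and the bound $\alpha^2(t)\mu(t)\le\alpha_0^2\mu_0$ then follows immediately from $\|w(t)\|_{L^2}^2\ge0$. I do not expect any real obstacle here: the only nontrivial ingredients are the (already recorded) conservation of mass for \eqref{PS} and the orthogonality of $w$ against the direction $e_3\cdot\eta=i\eta$, and everything else is the bookkeeping of the conformal scaling $g^{\ast}\omega=\alpha^2\mu\,\omega$.
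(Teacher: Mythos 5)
Your proposal is correct and follows essentially the same route as the paper: conservation of mass for the perturbed flow, the conformal scaling factor $\alpha^2\mu$ from the change of variables, and the orthogonality $\omega(w,e_3\cdot\eta)=0$ to kill the cross term $\jb{w,\eta}$. The only cosmetic difference is the direction in which you write the scaling identity.
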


\begin{proof}
By the conservation of the $L^2$-norm of the solution of the
Szeg\"o equation with a Toeplitz potential, we have that
\begin{align*}
\|\eta+w(t)\|_{L^2}^2=\|g(t)^{-1}u(t)\|_{L^2}^2=\frac{1}{\alpha^2(t)\mu(t)}\|u(t)\|_{L^2}^2
=\frac{\|u(0)\|_{L^2}^2}{\alpha^2(t)\mu(t)}=\frac{\pi\alpha^2_0\mu_0}{\alpha^2(t)\mu(t)}.
\end{align*}

\noi
By the orthogonality of $w$ and $\eta$, we have that $\omega (w,X\cdot\eta)=0$, for all $X\in\g$.
In particular, taking $X=e_3$, we obtain
\begin{align*}
\jb{w,\eta}=\textup{Re}\int w\cj{\eta}dx=-\textup{Im}\int w\cj{i\eta}dx=-\omega(w,e_3\cdot\eta)=0.
\end{align*}

\noi
Then
\begin{align*}
\|\eta+w(t)\|_{L^2}^2=\|\eta\|_{L^2}^2+\|w(t)\|_{L^2}^2=\pi+\|w(t)\|_{L^2}^2,
\end{align*}

\noi
and the conclusion follows.
\end{proof}

Next we define $P$, the symplectically orthogonal projection
on the manifold of solitons $M$. We also give two technical lemmas
concerning some properties of $P$.
\begin{definition}
Define the projection onto $T_{\eta}M=\g\cdot\eta\simeq \g$ by \\$P:\big(\cup_{N\in\N}\M(N)\big)'\to\g$,
\begin{align*}
\omega(u-P(u)\eta,Y\eta)=0, \forall Y\in\g.
\end{align*}
\end{definition}

\begin{lemma}\label{lemma:estimatesP}
Let $\|\cdot\|$ be a norm on $\g$ obtained by using the standard $\R^4$ norm
in the basis $\{e_1,e_2,e_3,e_4\}$. Then, for all $w\in H^{\frac{1}{2}}_+$ and $Y\in\g$, we have
\begin{align*}
&\|P(Yw)\|\leq C\|Y\| \|w\|_{L^2},\\
&\|P(i\Nn w)\|\leq C \|w\|^2_{H^{\frac{1}{2}}_+}(\|w\|_{H^{\frac{1}{2}}_+}+1).
\end{align*}

\end{lemma}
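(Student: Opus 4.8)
The key point is that $P(v)\in\g$ is determined by the linear system $\omega(v,Y\cdot\eta)=\omega\big(P(v)\eta,Y\cdot\eta\big)$ for all $Y\in\g$, i.e. in the basis $\{e_j\cdot\eta\}_{j=1}^4$ the coordinates of $P(v)$ are obtained from the vector $\big(\omega(v,e_j\cdot\eta)\big)_{j=1}^4$ by applying the inverse of the (fixed, invertible) Gram matrix $(\omega|_{M})_{\eta}$ computed in Lemma \ref{lemma: def w}. Hence there is a constant $C$, depending only on that matrix, with $\|P(v)\|\le C\max_j|\omega(v,e_j\cdot\eta)|$. So both estimates reduce to bounding the four pairings $\omega(v,e_j\cdot\eta)=\textup{Im}\int v\,\overline{e_j\cdot\eta}\,dx$, where $v=Yw$ in the first case and $v=i\Nn w$ in the second.

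For the first estimate I would write $Y=\sum_k y_k e_k$ so that $\|Y\|\sim\max_k|y_k|$, and bound $|\omega(e_k w,e_j\cdot\eta)|\le \|e_k w\|_{?}\,\|e_j\cdot\eta\|_{?}$ by Cauchy--Schwarz. The only subtlety is that $e_1=-\partial_x$ and $e_4=\partial_x\cdot x$ are unbounded, so one cannot pair $e_1 w$ with $\eta$ directly in $L^2$; instead integrate by parts to move the derivative onto $\eta$ (using that $e_1\cdot\eta=-\eta'$ and $e_4\cdot\eta=\partial_x(x\eta)$ are rapidly decaying $L^2$ functions, in fact in $L^2\cap L^1$) and recall the algebraic identity, valid for any $Y\in\g$, that $\omega(Yw,Y'\eta)=-\omega(w,Y Y'\eta)+(\text{boundary terms that vanish})$; more simply, use $\omega(Yw,Y'\eta)=\langle Yw, iY'\eta\rangle$ and the fact that $Y$ acting on $w$ can be transposed onto the Schwartz function $iY'\eta$. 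Either way one gets $|\omega(Yw,e_j\cdot\eta)|\le C\|Y\|\,\|w\|_{L^2}$ since all the functions $e_je_k\cdot\eta$ lie in a fixed bounded set of $L^2$, and summing over $k$ and taking the max over $j$ gives $\|P(Yw)\|\le C\|Y\|\,\|w\|_{L^2}$.

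For the second estimate, expand $\Nn w=\Pi\big(|w|^2w+|w|^2\eta+2w\,\textup{Re}(\eta\bar w)\big)$ and pair with $e_j\cdot\eta$. Since $\Pi$ is an $L^2$-orthogonal projection and $e_j\cdot\eta\in L^2_+$, the projection can be dropped: $\langle \Pi F, e_j\cdot\eta\rangle=\langle F,e_j\cdot\eta\rangle$. Then estimate each trilinear term by H\"older, putting the lowest-regularity factors in $L^4$ and using the Sobolev embedding $H^{1/2}_+(\R)\embeds L^4(\R)$ together with $\eta\in H^{1/2}_+\cap L^\infty$ and $e_j\cdot\eta\in L^2\cap L^4$: the term $|w|^2w$ contributes $\lesssim\|w\|_{L^4}^3\|e_j\cdot\eta\|_{L^4}\lesssim\|w\|_{H^{1/2}_+}^3$, the term $|w|^2\eta$ contributes $\lesssim\|w\|_{L^4}^2\|\eta\|_{L^\infty}\|e_j\cdot\eta\|_{L^2}\lesssim\|w\|_{H^{1/2}_+}^2$, and similarly for the mixed term. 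Collecting the three contributions yields $|\omega(i\Nn w,e_j\cdot\eta)|\le C\|w\|_{H^{1/2}_+}^2\big(\|w\|_{H^{1/2}_+}+1\big)$, and the Gram-matrix bound finishes the proof.

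The main obstacle is the first estimate: one must handle the unbounded generators $e_1,e_4$ carefully so as to land on $\|w\|_{L^2}$ (not $\|w\|_{H^{1/2}_+}$) on the right-hand side, which is exactly what makes this lemma useful later. Transposing the differential operators onto the Schwartz function $e_j\cdot\eta$ before applying Cauchy--Schwarz is the way to do this; everything else is routine H\"older and Sobolev bookkeeping.
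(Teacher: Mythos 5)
Your plan is correct and follows essentially the same route as the paper. The paper's proof writes $P=\sum_j P_j e_j$, solves the $4\times4$ linear system explicitly to obtain closed-form expressions for each $P_j(u)$ as a linear combination of the pairings $\omega(u,e_k\cdot\eta)$, and then estimates those pairings exactly as you describe: integrating by parts to move the unbounded generators $e_1,e_4$ onto the rapidly decaying functions $\eta$, $x\eta$, Cauchy--Schwarz for the first bound (landing on $\|w\|_{L^2}$), and H\"older plus the Sobolev embedding $H^{1/2}(\R)\hookrightarrow L^p(\R)$ for the nonlinear terms in the second bound. Your slight variant of invoking invertibility of the Gram matrix $(\omega|_M)_\eta$ abstractly, rather than writing out the inverse, is a cosmetic difference; the substance is identical, and your observations that $\Pi$ can be dropped against $e_j\cdot\eta\in L^2_+$ and that the real point of the first estimate is to obtain $\|w\|_{L^2}$ rather than $\|w\|_{H^{1/2}_+}$ are both correct and are implicit in the paper's computation.
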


\begin{proof}
Let $P=\sum_{j=1}^4P_je_j$, $P_j:H^{-\frac{1}{2}}_+\to\R$.
Then the definition of $P$ yields
\begin{align*}
\oet(u-\sum_{j=1}^4P_je_j\cdot\eta, a_1e_1\cdot\eta
+a_2e_2\cdot\eta+a_3e_3\cdot\eta+a_4e_4\cdot\eta)=0,
\end{align*}

\noi
for all $a_i\in\R$. Then, it follows that
\begin{align*}
&a_1\big(\omega(u,e_1\cdot\eta)-\frac{\pi}{2}P_2-\frac{\pi}{2}P_4\big)
+a_2\big(\omega(u,e_2\cdot\eta)+\frac{\pi}{2}P_1-\pi P_3\big)\\
&+a_3\big(\omega(u,e_3\cdot\eta)+\pi P_2+\frac{\pi}{2}P_4\big)
+a_4\big(\omega(u,e_4\cdot\eta)+\frac{\pi}{2}P_1-\frac{\pi}{2}P_3\big)=0,
\end{align*}

for all $a_i\in\R$. Therefore,
\begin{align*}
\begin{cases}
P_1(u)=\frac{2}{\pi}\Big(\omega(u,e_2\cdot\eta)-2\omega(u,e_4\cdot\eta)\Big),\\
P_2(u)=\frac{2}{\pi}\Big(-\omega(u,e_3\cdot\eta)-\omega(u,e_1\cdot\eta)\Big),\\
P_3(u)=\frac{2}{\pi}\Big(\omega(u,e_2\cdot\eta)-\omega(u,e_4\cdot\eta)\Big),\\
P_4(u)=\frac{2}{\pi}\Big(2\omega(u,e_1\cdot\eta)+\omega(u,e_3\cdot\eta)\Big).
\end{cases}
\end{align*}

\noi
The conclusion follows by using the Cauchy-Schwarz inequality
 and integration by parts. For example, for $P_1$ we have
\begin{align*}
\|P_1(Yw)\|\leq &\Big|\int Yw\bar{\eta}\Big|
+2\Big|\int Yw\cj{\partial_x(x\eta)}\Big|\\
=&\Big|\int \big(-Y_1\partial_x w+Y_2w+iY_3w
+Y_4\partial_x(xw)\big)\bar{\eta}dx\Big|\\
&+2\Big|\int \big(-Y_1\partial_x w+Y_2w+iY_3w
+Y_4\partial_x(xw)\big)\cj{\partial_x(x\eta)}dx\Big|\\
\leq &\|Y\|\Big(\Big|\int w\partial_x\bar{\eta}dx\Big|
+2\Big|\int w\bar{\eta}dx\Big|
+\Big|\int xw\partial_x\bar{\eta}dx\Big|
+2\Big|\int w\partial_x^2(x\bar{\eta})dx\Big|\\
&+4\Big|\int w\partial_x(x\bar{\eta})dx\Big|
+2\Big|\int xw\partial^2_x(x\bar{\eta})dx\Big|\Big)\\
\leq & C\|Y\|\|w\|_{L^2}\big(\|\partial_x\eta\|_{L^2}
+\|\eta\|_{L^2}+\|x\partial_x\eta\|_{L^2}+\|\partial^2_x(x\eta)\|_{L^2}
+\|x\partial^2_x(x\eta)\|_{L^2}\big)\\
\leq &C\|Y\|\|w\|_{L^2}.
\end{align*}

\noi
By using the Sobolev embedding $H^{\frac{1}{2}}(\R)\subset L^p(\R)$
for all $2\leq p<\infty$, we have
\begin{align*}
\|P_1(i\Nn w)\|=&\|\omega(i\Nn w,\eta)-2\omega(i\Nn w,\partial_x(x\eta))\|\\
\leq &\Big|\int |w|^2w\bar{\eta}dx+\int |w|^2|\eta|^2dx
+2\int w\textup{Re}(\eta\bar{w})\bar{\eta}dx\Big|\\
&+2\Big| \int|w|^2w\partial_x(x\bar{\eta})dx+\int |w|^2\eta\partial_x(x\bar{\eta})dx
+2\int w\textup{Re}(\eta\bar{w})\partial_x(x\bar{\eta})dx\Big|\\
\leq &C(\|w^2\|_{L^2}+\|w^3\|_{L^2})\leq C\|w\|_{L^4}(\|w\|_{L^4}+\|w\|_{L^8}^2)
\leq \|w\|_{H^{\frac{1}{2}}_+}^2(\|w\|_{H^{\frac{1}{2}}_+}+1).
\end{align*}

\end{proof}

\begin{lemma}
If $f:\R\to\R$ is a function of class $C^1$
such that $f'\in L^1(\R)\cap L^2(\R)$ and $f\in L^{\infty}(\R)$, then
\begin{align*}
P(\Pi(if\eta))&=\frac{2}{\pi}\Big(\int f'(x)x|\eta(x)|^2dx\Big)
e_1-\frac{1}{\pi} \Big(\int f'(x)|\eta(x)|^2dx\Big) e_2\\
&+\frac{1}{\pi}\Big(\int f(x)|\eta(x)|^2dx+\int f'(x)x|\eta(x)|^2dx\Big)e_3
+\frac{2}{\pi}\Big(\int f'(x)|\eta(x)|^2dx\Big) e_4.
\end{align*}

\end{lemma}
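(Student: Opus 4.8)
The plan is to reduce the statement to the explicit expressions for the four components of $P$ that were already extracted in the proof of Lemma~\ref{lemma:estimatesP}, namely
\begin{gather*}
P_1(u)=\tfrac{2}{\pi}\big(\omega(u,e_2\cdot\eta)-2\omega(u,e_4\cdot\eta)\big),\qquad
P_2(u)=\tfrac{2}{\pi}\big(-\omega(u,e_3\cdot\eta)-\omega(u,e_1\cdot\eta)\big),\\
P_3(u)=\tfrac{2}{\pi}\big(\omega(u,e_2\cdot\eta)-\omega(u,e_4\cdot\eta)\big),\qquad
P_4(u)=\tfrac{2}{\pi}\big(2\omega(u,e_1\cdot\eta)+\omega(u,e_3\cdot\eta)\big).
\end{gather*}
With these in hand, the whole computation comes down to evaluating the four symplectic pairings $\omega(\Pi(if\eta),e_j\cdot\eta)$, $j=1,2,3,4$. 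First I would observe that each $e_j\cdot\eta$ belongs to $L^2_+$ and that $\Pi$ is the orthogonal projection onto $L^2_+$, hence self-adjoint with $\Pi(e_j\cdot\eta)=e_j\cdot\eta$; therefore $\omega(\Pi(if\eta),e_j\cdot\eta)=\textup{Im}\int_{\R}if\eta\,\overline{e_j\cdot\eta}\,dx$ and the projector disappears from the problem.

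Next I would record $e_1\cdot\eta=-\eta'$, $e_2\cdot\eta=\eta$, $e_3\cdot\eta=i\eta$, and $e_4\cdot\eta=\partial_x(x\eta)=\eta+x\eta'$, and use throughout that $f$ is real-valued. Since $f|\eta|^2$ is real, the pairings against $e_2\cdot\eta$ and $e_3\cdot\eta$ are immediate: $\omega(\Pi(if\eta),e_2\cdot\eta)=\textup{Im}\int if|\eta|^2\,dx=\int f|\eta|^2\,dx$ and $\omega(\Pi(if\eta),e_3\cdot\eta)=\textup{Im}\int f|\eta|^2\,dx=0$. For the pairings against $e_1\cdot\eta$ and $e_4\cdot\eta$ I would invoke the elementary identity $2\,\textup{Re}(\eta\,\overline{\eta'})=\partial_x|\eta|^2$ to rewrite $\textup{Im}\big(-if\,\eta\,\overline{\eta'}\big)=-f\,\textup{Re}(\eta\,\overline{\eta'})=-\tfrac{1}{2} f\,\partial_x|\eta|^2$ and, with the extra factor $x$ produced by $e_4$, $\textup{Im}\big(ifx\,\eta\,\overline{\eta'}\big)=\tfrac{1}{2} fx\,\partial_x|\eta|^2$; one integration by parts in each then yields $\omega(\Pi(if\eta),e_1\cdot\eta)=\tfrac{1}{2}\int f'|\eta|^2\,dx$ and $\omega(\Pi(if\eta),e_4\cdot\eta)=\tfrac{1}{2}\int f|\eta|^2\,dx-\tfrac{1}{2}\int f'x|\eta|^2\,dx$.

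Finally I would substitute these four values into the formulas for $P_1,\dots,P_4$ above. The contributions $\int f|\eta|^2\,dx$ cancel in $P_1$ and recombine in $P_3$, and what is left is precisely the coefficients $\tfrac{2}{\pi}\int f'x|\eta|^2\,dx$, $-\tfrac{1}{\pi}\int f'|\eta|^2\,dx$, $\tfrac{1}{\pi}\big(\int f|\eta|^2\,dx+\int f'x|\eta|^2\,dx\big)$, and $\tfrac{2}{\pi}\int f'|\eta|^2\,dx$ on $e_1,e_2,e_3,e_4$ respectively, which is the asserted formula for $P(\Pi(if\eta))$. The computation is otherwise entirely routine; the only points requiring a little care are that every integral that appears converges absolutely and that the boundary terms in the two integrations by parts vanish. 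These are exactly what the hypotheses secure: $|\eta(x)|^2=O(|x|^{-2})$ and $|x|\,|\eta(x)|^2=O(|x|^{-1})$, so $f|\eta|^2\in L^1$ and $fx|\eta|^2\to0$ at infinity already from $f\in L^\infty$, while $f'x|\eta|^2\in L^1$ follows from $f'\in L^1$ (and, alternatively, from $f'\in L^2$ since $x|\eta|^2\in L^2$). This is the only, and quite mild, obstacle in the argument.
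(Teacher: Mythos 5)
Your proof is correct, and it is essentially the same computation as the paper's: both reduce everything to the four pairings $\omega(if\eta,e_j\cdot\eta)$, $j=1,\dots,4$, and then invert the matrix of $(\omega|_M)_\eta$. The only organizational difference is that you reuse the explicit formulas for $P_1,\dots,P_4$ already extracted in the proof of Lemma~\ref{lemma:estimatesP}, whereas the paper instead verifies the defining relation $\omega(u-P(u)\eta,Y\eta)=0$ directly by matching against $(\omega|_M)_\eta$; these are the same calculation packaged two ways.
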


\begin{proof}
Let $Y=\sum_{j=1}^4a_je_j$ be an arbitrary vector in $\g$.
Then, integrating by parts we have
\begin{align*}
\omega(\Pi(if&\eta),Y\cdot\eta)=\omega (if\eta,a_1e_1\cdot \eta
+a_2e_2\cdot\eta+a_3e_3\cdot\eta+a_4e_4\cdot\eta)\\
&=\textup{Im}\Big(-a_1\int if\eta\partial_x\bar{\eta}dx
+a_2\int if\eta\bar{\eta}dx\\
&\hphantom{XXXX}+a_3\int if\eta (-i)\bar{\eta}dx+a_4\int
if\eta\partial_x(x\bar{\eta})dx\Big)\\
&=-\frac{a_1}{2}\int f\partial_x(|\eta|^2)dx+a_2\int f|\eta|^2dx
+a_4\textup{Re}\int f(x)\eta(x)\big(\bar{\eta}(x)+x\partial_x\bar{\eta}(x)\big)dx\\
&=\frac{a_1}{2}\int f'|\eta|^2dx+(a_2+a_4)\int f|\eta|^2dx
-\frac{a_4}{2}\int\big(xf'(x)+f(x)\big)|\eta(x)|^2dx\\
&=\frac{a_1}{2}\int f'|\eta|^2dx+(a_2+\frac{a_4}{2})
\int f|\eta|^2dx-\frac{a_4}{2}\int f'(x)x|\eta(x)|^2dx.
\end{align*}

\noi
Using the formula for $\oet$ we have
\begin{align*}
&\omega\Big(\frac{2}{\pi}\Big(\int f'(x)x|\eta(x)|^2dx\Big) e_1\cdot\eta
-\frac{1}{\pi} \Big(\int f'(x)|\eta(x)|^2dx\Big) e_2\cdot\eta\\
&+\frac{1}{\pi}\Big(\int f(x)|\eta(x)|^2dx+\int f'(x)x|\eta(x)|^2dx\Big)e_3\cdot\eta+\frac{2}{\pi}\Big(\int f'(x)|\eta(x)|^2dx\Big) e_4\cdot\eta,Y\cdot\eta\Big)\\
&=\frac{a_1}{2}\int f'|\eta|^2dx+(a_2+\frac{a_4}{2})
\int f|\eta|^2dx-\frac{a_4}{2}\int f'(x)x|\eta(x)|^2dx.
\end{align*}

\noi
By the definition of the projection $P$, the conclusion follows.
\end{proof}

\begin{lemma}\label{lemma:P(X)}
\begin{align*}
P\Big(-i\eps\Pi\big(b(a+\frac{x}{\mu})\eta\big)
+2B e_1\cdot\eta-C e_2\cdot\eta
+(A+B)e_3\cdot\eta+2C e_4\cdot\eta\Big)=0.
\end{align*}
\end{lemma}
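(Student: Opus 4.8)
The plan is to reduce the statement directly to the (unlabelled) lemma immediately preceding it. I apply that lemma with
\[
f(x):=\eps\, b\Big(a+\tfrac{x}{\mu}\Big),
\]
the parameters $a,\mu$ being held fixed. Its hypotheses hold under the assumptions of Theorem \ref{main theorem}: since $b\in H^1(\R)\subset L^\infty(\R)$ we have $f\in L^\infty(\R)$, while $f'(x)=\tfrac{\eps}{\mu}\,b'\big(a+\tfrac{x}{\mu}\big)$, so the change of variables $y=a+\tfrac{x}{\mu}$ gives $\|f'\|_{L^1}=\eps\|b'\|_{L^1}$ and $\|f'\|_{L^2}^2=\tfrac{\eps^2}{\mu}\|b'\|_{L^2}^2$, both finite because $b'\in L^1(\R)$ and $b\in H^1(\R)$.

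Next I substitute into the formula furnished by that lemma and recognize the three integrals that appear. With $f=\eps\, b(a+\tfrac{x}{\mu})$, the same change of variables and the definitions in \eqref{eqn:X} give
\begin{align*}
\frac{1}{\pi}\int f(x)|\eta(x)|^2\,dx=A,\qquad
\frac{1}{\pi}\int f'(x)\,x\,|\eta(x)|^2\,dx=B,\qquad
\frac{1}{\pi}\int f'(x)\,|\eta(x)|^2\,dx=C,
\end{align*}
so the lemma yields
\begin{align*}
P\big(\Pi(i\eps\, b(a+\tfrac{x}{\mu})\eta)\big)
=2B\,e_1-C\,e_2+(A+B)\,e_3+2C\,e_4 .
\end{align*}

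Finally I invoke the $\R$-linearity of $P$ together with the fact that $P$ is the identity on $T_\eta M=\g\cdot\eta$ under the identification $T_\eta M\simeq\g$: if $u=Z\cdot\eta$ with $Z\in\g$, the defining relation $\omega(u-P(u)\eta,Y\eta)=0$ for all $Y\in\g$ and the non-degeneracy of $(\omega|_{M})_\eta$ force $P(Z\cdot\eta)=Z$; in particular $P(e_j\cdot\eta)=e_j$. Since $-i\eps\Pi\big(b(a+\tfrac{x}{\mu})\eta\big)=-\Pi\big(i\eps\, b(a+\tfrac{x}{\mu})\eta\big)$, combining the last two displays gives
\begin{align*}
&P\Big(-i\eps\Pi\big(b(a+\tfrac{x}{\mu})\eta\big)+2B\,e_1\cdot\eta-C\,e_2\cdot\eta+(A+B)\,e_3\cdot\eta+2C\,e_4\cdot\eta\Big)\\
&\qquad=-\big(2B\,e_1-C\,e_2+(A+B)\,e_3+2C\,e_4\big)+\big(2B\,e_1-C\,e_2+(A+B)\,e_3+2C\,e_4\big)=0 .
\end{align*}
There is no genuine obstacle here; the only points needing a line of care are the verification of the integrability hypotheses of the preceding lemma (handled by the change of variables together with $b\in H^1$ and $b'\in L^1$) and the bookkeeping that matches the three integrals to the constants $A,B,C$ of \eqref{eqn:X}.
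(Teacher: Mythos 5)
Your proof is correct and follows exactly the same route as the paper, whose entire proof of this lemma is the single line ``Take $f(x)=\eps b(a+\frac{x}{\mu})$ in the above lemma.'' You have simply spelled out the details that the paper leaves implicit: the verification that $f\in L^\infty$ and $f'\in L^1\cap L^2$, the change of variables matching the three integrals to $A$, $B$, $C$, and the use of $\R$-linearity of $P$ together with $P(e_j\cdot\eta)=e_j$.
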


\begin{proof}
Take $f(x)=\eps b(a+\frac{x}{\mu})$ in the above lemma.
\end{proof}

\begin{remark}
\rm
Lemma \ref{lemma:P(X)}
and equation \eqref{some eqn for X} show that
\[P\Big(-Y\eta-i\eps\Pi\big(b(a+\frac{x}{\mu})\eta\big)\Big)
=-X-\frac{\alpha^2\mu^2}{2}e_1+\frac{\alpha^2\mu^2}{4}e_3.\]

\noi
Thus, $X$ is the orthogonal projection on the manifold of solitons
of a significant term of the right-hand side of the equation \eqref{eqn w inter} satisfied by $w$.
\end{remark}

In the following we intend to give an estimate for $\|X\|$.
We need the following definition and Lemma that we cite from \cite[Lemma 2.2]{Zworski}.

Let $f\in C^{\infty}(H^{\frac{1}{2}}_+,\R)$ and suppose $df(\rho_0)=0$.
Then the Hessian of $f$ at $\rho_0$ is well defined
$f''(\rho_0):T_{\rho_0}H^{\frac{1}{2}}_+\to T^{\ast}_{\rho_0}H^{\frac{1}{2}}_+$.
 We identify $T_{\rho_0}H^{\frac{1}{2}}_+$ and $T^{\ast}_{\rho_0}H^{\frac{1}{2}}_+$
 using the inner product and we define the {\it Hamiltonian map}
 $F:T_{\rho_0}H^{\frac{1}{2}}_+\to T_{\rho_0}H^{\frac{1}{2}}_+$ by
\begin{align*}
F=-if''(\rho_0),\,\,\,\,\ \jb{f''(\rho_0)X,Y}=\omega(Y,FX).
\end{align*}

\begin{lemma}\label{lemma:Hamiltonian map}
Let $N\subset H^{\frac{1}{2}}_+$ be a finite-dimensional
symplectic submanifold of $H^{\frac{1}{2}}_+$ and let
$f\in C^{\infty}(H^{\frac{1}{2}}_+,\R)$ such that
\begin{align*}
X_f(\rho)\in T_{\rho}N\subset T_{\rho} H^{\frac{1}{2}}_+, \rho\in N.
\end{align*}

If $\rho_0\in N$ and $df(\rho_0)=0$, then the Hamiltonian map satisfies
\begin{align*}
F(T_{\rho}N)\subset T_{\rho}N.
\end{align*}

\end{lemma}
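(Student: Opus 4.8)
The plan is to reduce the statement to a linear-algebra fact about the symplectic complement. Since $N$ is a finite-dimensional symplectic submanifold, at each $\rho\in N$ we have a direct-sum decomposition $T_\rho H^{\frac12}_+ = T_\rho N \oplus (T_\rho N)^{\omega}$, where $(T_\rho N)^{\omega}=\{v : \omega(v,\cdot)|_{T_\rho N}=0\}$ is the symplectic orthogonal; the sum is direct precisely because $\omega|_N$ is nondegenerate. I will show that $F$ preserves $T_\rho N$ by showing that $\jb{f''(\rho_0)X,Y}=0$ whenever $X\in T_{\rho_0}N$ and $Y\in (T_{\rho_0}N)^{\omega}$, which forces $FX$ to have zero symplectic pairing against every such $Y$, hence $FX\in\big((T_{\rho_0}N)^{\omega}\big)^{\omega}=T_{\rho_0}N$.

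First I would work near $\rho_0$ on $N$ and use the hypothesis $X_f(\rho)\in T_\rho N$ for all $\rho\in N$. Differentiating this relation at $\rho_0$: take a curve $\rho(s)$ in $N$ with $\rho(0)=\rho_0$ and $\dot\rho(0)=X\in T_{\rho_0}N$, and extend any $Y\in (T_{\rho_0}N)^{\omega}$ to a vector field along $N$ that stays in the symplectic complement. The key identity is that differentiating $\omega(X_f,\cdot)=df$ gives, at the critical point $\rho_0$ where $df(\rho_0)=0$, a clean expression: for $X\in T_{\rho_0}N$, the derivative $\frac{d}{ds}\big|_{s=0} X_f(\rho(s))$ equals $F X$ modulo terms that pair trivially, because $\jb{f''(\rho_0)X,Y}=d_X\big(d f(Y)\big)=d_X\big(\omega(X_f,Y)\big)=\omega\big(\nabla_X X_f, Y\big)+\omega\big(X_f(\rho_0),\nabla_X Y\big)$. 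Since $X_f(\rho_0)\in T_{\rho_0}N$ and $\nabla_X Y$ can be arranged so that the second term involves only the $T_{\rho_0}N$–component paired against $X_f(\rho_0)\in T_{\rho_0}N$ — and here one exploits that $Y$ lies in the complement so that this term vanishes by antisymmetry of the roles — one is left with $\jb{f''(\rho_0)X,Y}=\omega(\nabla_X X_f,Y)$. But $\nabla_X X_f\in T_{\rho_0}N$ by the hypothesis (it is the derivative along $N$ of a vector field valued in $TN$, projected appropriately), so $\omega(\nabla_X X_f,Y)=0$ since $Y\in (T_{\rho_0}N)^{\omega}$.

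Then by the definition $\jb{f''(\rho_0)X,Y}=\omega(Y,FX)$, we conclude $\omega(Y,FX)=0$ for all $Y\in (T_{\rho_0}N)^{\omega}$, hence $FX\in \big((T_{\rho_0}N)^{\omega}\big)^{\omega}$. Because $N$ is symplectic and finite-dimensional, this double symplectic complement is exactly $T_{\rho_0}N$, giving $F(T_{\rho_0}N)\subset T_{\rho_0}N$ as claimed; the same argument at every $\rho\in N$ (each a critical point-free but the statement is only asserted at $\rho_0$, or one translates by the group action) finishes it. I expect the main technical obstacle to be making rigorous the "covariant derivative" manipulation in infinite dimensions: one must choose the extension of $Y$ off $\rho_0$ carefully — most cleanly, extend $Y$ to be constant in the ambient linear space $H^{\frac12}_+$ so that $\nabla_X Y=0$ and the cross term disappears outright, reducing the identity to $\jb{f''(\rho_0)X,Y}=d_X(\omega(X_f,Y))=\omega(d_X X_f,Y)$ with $d_X X_f\in T_{\rho_0}N$ by hypothesis. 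This is precisely the argument of \cite[Lemma 2.2]{Zworski}, which is why we may simply cite it, but the sketch above records why it holds in our setting.
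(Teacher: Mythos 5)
The paper does not actually prove this lemma: it only states it, citing it verbatim from Holmer--Zworski \cite[Lemma 2.2]{Zworski}, so there is no in-paper argument to compare against. Your reduction to the statement that $F$ annihilates the symplectic pairing with $(T_{\rho_0}N)^{\omega}$, followed by the double-complement identity $\big((T_{\rho_0}N)^{\omega}\big)^{\omega}=T_{\rho_0}N$, is the right strategy and is in the spirit of what Holmer and Zworski do. Extending $Y$ to a constant vector field in the ambient Hilbert space to kill the cross term is also the clean move.

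There is, however, a genuine gap in the final step. After reducing to $\jb{f''(\rho_0)X,Y}=\omega(D_XX_f,Y)$, you assert ``$D_XX_f\in T_{\rho_0}N$ by hypothesis.'' This does \emph{not} follow merely from $X_f$ being tangent to $N$ along $N$: the ambient directional derivative of a tangent vector field on a submanifold is generally not tangent (think of $V(x,y)=(-y,x)$ on $S^1\subset\R^2$; at $(1,0)$ its derivative in the tangent direction $(0,1)$ is $(-1,0)$, which is normal). Your hedge ``projected appropriately'' signals the problem: if you need to project, the pairing $\omega(D_XX_f,Y)$ picks up the normal component and need not vanish. What rescues the argument is a fact you never invoke: since $df(\rho_0)=0$ and $\omega$ is nondegenerate, $X_f(\rho_0)=0$. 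Once $X_f$ vanishes at $\rho_0$, the directional derivative $D_XX_f(\rho_0)$ along a curve in $N$ is computed from a frame expansion $X_f(\rho(s))=\sum a_i(s)E_i(\rho(s))$ with $a_i(0)=0$, so the frame-derivative terms drop out and $D_XX_f(\rho_0)=\sum\dot a_i(0)E_i(\rho_0)\in T_{\rho_0}N$. (This also makes the cross term $\omega(X_f(\rho_0),\nabla_XY)$ vanish outright, for any extension of $Y$, so the constant extension is not even needed.) Adding the one-line observation that $X_f(\rho_0)=0$ closes the gap and makes your argument rigorous.
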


\begin{lemma}\label{lemma:X}
If the solution of the perturbed Szeg\"o equation \eqref{Szego T} can be
reparametrized as in Lemma \ref{lemma: def w}, $u(t)=g(t)\cdot (\eta+w(t))$, for all $t$ in an interval
$(t_1,t_2)$, $\|w(t)\|_{L^2}$ is small enough, and $\frac{\mu_0}{2}\leq \mu(t)\leq \frac{3\mu_0}{2}$, then
the vector $X$ defined by
\begin{align*}
&X=\big(\dot{a}\mu-\frac{\alpha^2\mu^2}{2}+2B\big)e_1
+\big(\frac{\dot{\alpha}}{\alpha}-C\big)e_2+\big(\dot{\phi}
+\frac{\alpha^2\mu^2}{4}+A+B\big)e_3+
\big(\frac{\dot{\mu}}{\mu}+2C\big)e_4,
\end{align*}

\noi
where the expressions of $A,B,C$
can be found in equation \eqref{eqn:X}, satisfies the inequality
\begin{align*}
\|X\|\leq C(\eps\|w\|_{L^2}+\|w\|_{H^{\frac{1}{2}}_+}^2
+\|w\|_{H^{\frac{1}{2}}_+}^3).
\end{align*}
\end{lemma}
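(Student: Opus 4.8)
The plan is to derive the estimate for $\|X\|$ by computing the symplectically orthogonal projection $P$ applied to the full equation for $w$ from Proposition \ref{Prop eqn w}, and exploiting the orthogonality of $w$ together with the coerciveness structure of $\mathcal{L}$. Concretely, apply $P$ to both sides of the equation satisfied by $\partial_t w$. Since $\omega(w(t),X\cdot\eta)=0$ for all $X\in\g$ and $t$ ranges over an interval, differentiating the orthogonality relation in $t$ shows that $P(\partial_t w)$ equals a controlled expression; more precisely, one gets a relation of the form $\frac{d}{dt}\omega(w,X\eta)=\omega(\partial_t w,X\eta)+\omega(w,\partial_t(X\eta))$, and the second term involves $\|w\|_{L^2}$ times the time derivatives of the parameters $g(t)$. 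The upshot is that $P(\partial_t w)$ is $O(\|\dot g\|\,\|w\|_{L^2})$; combined with $\|\dot g\| = O(1)$ (which follows from $X = O(\text{small})$ that we are proving — so one argues on a bootstrap interval, or simply uses the a priori bound $\|\dot g\|\le C$ coming from $\dot a\mu \approx \alpha^2\mu^2/2$, $\dot\phi\approx -\alpha^2\mu^2/4$ being the dominant terms), this gives $\|P(\partial_t w)\|\le C\|w\|_{L^2}$.

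Next I would project the right-hand side of the $w$-equation term by term. By the Remark following Lemma \ref{lemma:P(X)}, we have
\begin{align*}
P\Big(-X\eta + \big(-i\eps\Pi(b(a+\tfrac{x}{\mu})\eta) + 2Be_1\cdot\eta - Ce_2\cdot\eta + (A+B)e_3\cdot\eta + 2Ce_4\cdot\eta\big)\Big) = -X,
\end{align*}
since $P(X\eta)=X$ (as $X\in\g$ and $P$ is the identity on $\g$-directions) and the bracketed term projects to zero by Lemma \ref{lemma:P(X)}. The linear-in-$w$ terms of the form $-Xw$, $2Be_1\cdot w$, etc., are handled by the first estimate in Lemma \ref{lemma:estimatesP}: $\|P(Yw)\|\le C\|Y\|\,\|w\|_{L^2}$, and the relevant $Y$'s here have norm $O(\eps)$ plus the bootstrap-small $\|X\|$, so these contribute $O(\eps\|w\|_{L^2})$ (the $\|X\|\,\|w\|_{L^2}$ piece can be absorbed on the left since $\|w\|_{L^2}$ is small). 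The term $-i\eps\Pi(b(a+\tfrac{x}{\mu})w)$ contributes $O(\eps\|w\|_{L^2})$ by the same bound (or a direct Cauchy–Schwarz estimate since $b\in L^\infty$). The nonlinear term $-i\alpha^2\mu^2\mathcal{N}w$ contributes $O(\|w\|_{H^{1/2}_+}^2 + \|w\|_{H^{1/2}_+}^3)$ by the second estimate in Lemma \ref{lemma:estimatesP}, using $\alpha^2\mu^2 = O(1)$ (which follows from Lemma \ref{lemma:alpha2mu} and the hypothesis $\frac{\mu_0}{2}\le\mu\le\frac{3\mu_0}{2}$).

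The one genuinely delicate term is $i\alpha^2\mu^2\mathcal{L}w$: a priori $\mathcal{L}w$ is only $O(\|w\|_{H^{1/2}_+})$, which would be too weak. The key observation is that $P(i\alpha^2\mu^2\mathcal{L}w) = 0$, i.e.\ the linearized flow $i\mathcal{L}w$ is tangent to $M$ direction-by-direction when projected — more precisely, $\mathcal{L}$ is (up to the factor $-i$) the Hamiltonian map $F$ associated to the Hessian $\mathcal{E}''_\eta$ of the functional $\mathcal{E}$ at its critical point $\eta$, and by Lemma \ref{lemma:Hamiltonian map} (applied with $N=M$, $f=\mathcal{E}$, noting $X_{\mathcal{E}}(\rho)\in T_\rho M$ and $d_\eta\mathcal{E}=0$ by \eqref{eqn:eta}) we have $F(T_\eta M)\subset T_\eta M$. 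Since $w$ is symplectically orthogonal to $T_\eta M = \g\cdot\eta$, and $F$ preserves $T_\eta M$ (hence, by the symplectic structure, $F$ also maps the symplectic orthogonal of $T_\eta M$ into itself), it follows that $i\mathcal{L}w = Fw$ lies in the symplectic orthogonal complement of $T_\eta M$, so $P(i\alpha^2\mu^2\mathcal{L}w) = \alpha^2\mu^2 P(Fw) = 0$. This is the main obstacle and the heart of the argument — verifying that the orthogonality-preserving property of $F$ from Lemma \ref{lemma:Hamiltonian map} is exactly what kills the dangerous term. Collecting all contributions yields
\begin{align*}
\|X\| \le C\big(\eps\|w\|_{L^2} + \|w\|_{H^{1/2}_+}^2 + \|w\|_{H^{1/2}_+}^3\big) + C\|w\|_{L^2}\|X\|,
\end{align*}
and since $\|w\|_{L^2}$ is small the last term is absorbed, giving the claimed inequality.
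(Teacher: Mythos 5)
Your overall strategy matches the paper's: apply $P$ to the equation for $\partial_t w$ from Proposition~\ref{Prop eqn w}, use $P(X\eta)=X$, kill the potential term in the $\eta$-direction via Lemma~\ref{lemma:P(X)}, bound the linear-in-$w$ and nonlinear terms via Lemma~\ref{lemma:estimatesP} and the bounds $|A|,|B|,|C|\le c\eps$, kill the dangerous term $P(i\Ll w)$ via the Hamiltonian-map invariance of Lemma~\ref{lemma:Hamiltonian map}, and absorb $C\|w\|_{L^2}\|X\|$ on the left. Your argument that $F=-i\Ll$ preserves the symplectic orthogonal complement of $T_\eta M$ (using symmetry of $\Ll$) is equivalent to the paper's direct calculation $\omega(-i\Ll w,X\eta)=-\jb{\Ll w,X\eta}=-\jb{w,\Ll(X\eta)}=\omega(w,-i\Ll(X\eta))=0$, and both rest on the same two ingredients. (Minor sign slip: the Hamiltonian map is $F=-i\Ll$, not $i\Ll$, but this has no effect.)

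There is, however, one genuine misstep in your treatment of the left-hand side $P(\partial_t w)$. You differentiate $\omega(w(t),X\cdot\eta)=0$ in $t$ and assert a correction term $\omega(w,\partial_t(X\eta))$ of size $O(\|\dot g\|\,\|w\|_{L^2})$, concluding only $\|P(\partial_t w)\|\le C\|w\|_{L^2}$. But $X\cdot\eta$ is a \emph{fixed} vector in $T_\eta M$: the decomposition $u(t)=g(t)\cdot(\eta+w(t))$ places $w$ in the reference frame in which $\eta$ and the basis $\{e_j\cdot\eta\}$ of $\g\cdot\eta$ are time-independent, and $P$ is the fixed projection onto $T_\eta M$. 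Hence $\partial_t(X\cdot\eta)\equiv 0$, so $P(\partial_t w)=\partial_t(Pw)=0$ exactly, which is the simple observation the paper records. This matters: if your weaker bound $\|P(\partial_t w)\|\le C\|w\|_{L^2}$ were actually used, it would inject a non-absorbable term $C\|w\|_{L^2}$ (without the factor $\eps$) into the estimate for $\|X\|$, strictly worse than the claimed $\|X\|\le C(\eps\|w\|_{L^2}+\|w\|_{H^{1/2}_+}^2+\|w\|_{H^{1/2}_+}^3)$. Your final displayed inequality happens to be correct only because the correction term vanishes identically; as written, your prose and your displayed conclusion are inconsistent, and the prose contains the error.
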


\begin{remark}
\rm
Lemma \ref{lemma:X} yields that if $\|w\|_{H^{1/2}_+}$ is small,
then $\|X\|$ is also small. On the other hand,
we noticed in Remark \ref{Rk} that $\|X\|$ measures
how far $a,\alpha,\phi,\mu$ are from the effective dynamics
\eqref{effective dynamics}. Thus, the Lemma \ref{lemma:X} shows that if one
can prove that $w$, the part of the flow which is
orthogonal to the manifold of solitons, is small,
then $a,\alpha,\phi,\mu$ are perturbations of the effective dynamics.
\end{remark}
\begin{proof}
Note first that $P(Y\cdot\eta)=Y$, for all $Y\in\g$.

Since $\omega(w,Y\cdot\eta)=0$, for all $Y\in\g$, it
follows that $Pw=0$ and $P\partial_tw=\partial_tPw=0$.
Then, by Proposition \ref{Prop eqn w} and Lemma \ref{lemma:P(X)}, we have
\begin{align*}
0=&-X-P(Xw)+\alpha^2\mu^2P(i\Ll w)-\alpha^2\mu^2P(i\Nn w)\\
&+P\Big(-i\eps\Pi(b(a+\frac{x}{\mu})w)+2B e_1\cdot w
-C e_2\cdot w+(A+B)e_3\cdot w+2C e_4\cdot w\Big).
\end{align*}

\noi
By Lemma \ref{lemma:estimatesP}, we have that
\begin{align*}
&\|P(Xw)\|\leq c\|X\| \|w\|_{L^2},\\
&\|P(i\Nn w)\|\leq c \|w\|^2_{H^{\frac{1}{2}}_+}(\|w\|_{H^{\frac{1}{2}}_+}+1).
\end{align*}

\noi
We prove that $P(-i\Ll w)=0$. For $\mathcal{E}$ defined by
equation  \eqref{eqn E}, we have that $X_{\mathcal{E}}$
is tangent to $M$, which corresponds to the fact that
 if the initial data is in  $M$, then the flow of $H_0$ stays in $M$. Then,
\begin{align*}
(X_{\mathcal{E}})_{g\cdot\eta}\subset T_{g\cdot\eta}M\subset T_{g\cdot\eta}
 H^{\frac{1}{2}}_+.
\end{align*}

\noi
Then, by Lemma \ref{lemma:Hamiltonian map}, we have that the Hamiltonian
map of $\mathcal{E}$, $-i\Ll$, satisfies
\begin{align*}
(-i\Ll )(T_{\eta}M)\subset T_{\eta} M.
\end{align*}

\noi
Then, since $w$ is orthogonal to $T_{\eta}M=\g\cdot\eta$ and $T_{|\eta|^2}$,
$H_{\eta^2}$ are symmetric with respect to the real scalar product, we obtain that
\begin{align*}
\omega (-i\Ll w,X\cdot\eta)&=\textup{Im}\int -i\Ll w \cj{X\cdot\eta}dx
=-\textup{Re}\int \Ll w\cj{X\cdot\eta}dx=-\jb{\Ll w,X\cdot\eta}\\
&=-\jb{w,\Ll(X\cdot\eta)}
=\textup{Im} \int w\cj{-i\Ll (X\cdot\eta)}dx=\omega \big(w,(-i\Ll )(X\cdot\eta)\big)=0.
\end{align*}
\noi
For the last term, we first notice that we have
\begin{align}\label{estimate ABC}
&|A|=\frac{\eps}{\pi}\Big|\int b(a+\frac{x}{\mu})|\eta(x)|^2dx\Big|
\leq c\eps \|b\|_{L^{\infty}}\|\eta\|_{L^2}^2\leq c\eps , \\
&|B|=\frac{\eps}{\pi}\Big|\int b'(a+\frac{x}{\mu})x|\eta(x)|^2\frac{dx}{\mu}\Big|
\leq c\eps \|b'\|_{L^{1}}\|x\eta^{2}(x)\|_{L^{\infty}}\leq c\eps,\notag\\
&|C|=\frac{\eps}{\pi}\Big|\int b'(a+\frac{x}{\mu})|\eta(x)|^2\frac{dx}{\mu}\Big|
\leq c\eps\|b'\|_{L^{1}}\|\eta\|_{L^{\infty}}^2\leq c\eps.\notag
\end{align}

\noi
Using the expression of $P$ we found in the proof of Lemma
\ref{lemma:estimatesP}, we obtain that
\begin{align*}
&\Big\|P\Big(-i\eps\Pi\big(b(a+\frac{x}{\mu})w\big)
+2B e_1\cdot w-C e_2\cdot w
+(A+B)e_3\cdot w+2C e_4\cdot w\Big)\Big\|\\
&\leq c\eps\|w\|_{L^2}.
\end{align*}

\noi
By  Lemma \ref{lemma:alpha2mu} we have that $\alpha^2\mu\leq\alpha^2_0\mu_0$, and thus we have
\begin{align*}
\|X\|&\leq c(\|X\|\|w\|_{L^2}+\mu\|w\|_{H^{\frac{1}{2}}_+}^2
+\mu\|w\|_{H^{\frac{1}{2}}_+}^3)+c\eps\|w\|_{L^2}.
\end{align*}

\noi
If $\|w\|_{L^2}$ is small enough so that $c\|w\|_{L^2}<1$, then we write
\begin{align*}
(1-c\|w\|_{L^2})\|X\|\leq c(\eps\|w\|_{L^2}+\mu\|w\|_{H^{\frac{1}{2}}_+}^2
+\mu\|w\|_{H^{\frac{1}{2}}_+}^3),
\end{align*}

\noi
To conclude, we use the fact that $\mu(t)\leq\frac{3\mu_0}{2}$.
\end{proof}

\section{Coerciveness of the linearized operator $\Ll$}

In this section we prove that the linearized operator $\Ll$,
defined by equation \eqref{eqn:linearized}, is coercive in directions which are
symplectically orthogonal to the manifold of solitons $M$.
\begin{lemma}\label{lemma:Ker}
For all $f\in\textup{Ker}(H_{\eta^2})\cap H^{\frac{1}{2}}_+$, we have that
\begin{align*}
\jb{\Ll(f),f}\geq \frac{1}{4}\|f\|_{H^{\frac{1}{2}}_+}^2.
\end{align*}
\end{lemma}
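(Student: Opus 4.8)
The plan is to recall the explicit structure of $H_{\eta^2}$ and its kernel, and then to exploit the formula \eqref{eqn:linearized} for $\Ll$ together with the Fourier representation of the operators $-\tfrac{i}{2}\partial_x$ and $T_{|\eta|^2}$ restricted to $\textup{Ker}(H_{\eta^2})$. The point is that for $f\in\textup{Ker}(H_{\eta^2})$ the term $H_{\eta^2}f$ drops out of $\jb{\Ll(f),f}$, so it suffices to show
\[
\jb{-\tfrac{i}{2}\partial_x f,f}-2\jb{T_{|\eta|^2}f,f}+\tfrac14\|f\|_{L^2}^2\geq \tfrac14\|f\|_{H^{1/2}_+}^2,
\]
which, since $\jb{-\tfrac{i}{2}\partial_x f,f}=\tfrac12\|f\|_{\dot H^{1/2}_+}^2$ and $\|f\|_{H^{1/2}_+}^2=\|f\|_{L^2}^2+\|f\|_{\dot H^{1/2}_+}^2$, is equivalent to the estimate $\tfrac14\|f\|_{\dot H^{1/2}_+}^2\geq 2\jb{T_{|\eta|^2}f,f}$, i.e.\ to a Hardy-type bound $\jb{T_{|\eta|^2}f,f}\le \tfrac18\|f\|_{\dot H^{1/2}_+}^2$ on the subspace $\textup{Ker}(H_{\eta^2})$.

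First I would identify $\textup{Ker}(H_{\eta^2})$ concretely. Since $\eta(x)=\tfrac1{x+i}$, the Hankel operator $H_{\eta^2}$ has symbol $\eta^2=\tfrac1{(x+i)^2}$, whose Fourier transform is supported at nonnegative frequencies and is, up to constants, $\xi e^{-\xi}$ on $[0,\infty)$; one computes that $H_{\eta^2}$ has rank one, with range spanned by $\eta$ (this is the standard fact that $\eta$ is the ground state and $H_{\eta^2}\eta$ is proportional to $\eta$). Hence $\textup{Ker}(H_{\eta^2})=\{f\in L^2_+ : \jb{f,\eta}_{\C}=0\ \text{in the appropriate pairing}\}$, a codimension-one subspace. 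On this subspace I would pass to the Fourier side: writing $\widehat f(\xi)=\xi\,g(\xi)$ or using the substitution that conjugates multiplication by $\tfrac1{x+i}$ to a translation/decay on the frequency side, the quadratic form $\jb{T_{|\eta|^2}f,f}=\int |\eta(x)|^2 |f(x)|^2\,dx=\int \tfrac{|f(x)|^2}{x^2+1}\,dx$ becomes a weighted $L^2$ norm that can be dominated by $\|f\|_{\dot H^{1/2}_+}^2$ via the classical Hardy inequality on the half-line, after using $f\in L^2_+$ so that $f$ extends holomorphically to $\C_+$ and $|f(x)|^2/(x^2+1)\lesssim$ a nontangential-maximal-function bound.

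The cleanest route is probably the following: show directly that the operator $\tfrac14(-i\partial_x)-2T_{|\eta|^2}+\tfrac14$ is $\ge \tfrac14$ on all of $H^{1/2}_+$ after subtracting the rank-one obstruction, i.e.\ that $\Ll\ge \tfrac14\,\mathrm{Id}$ in the form sense once we restrict away from the range of $H_{\eta^2}$. Concretely, write $\jb{\Ll f,f}-\tfrac14\|f\|_{H^{1/2}_+}^2 = \tfrac14\|f\|_{\dot H^{1/2}}^2 - 2\int\tfrac{|f|^2}{x^2+1}\,dx$ and prove this is $\ge 0$ for $f\perp\eta$. For $f\in L^2_+$, Fourier support in $[0,\infty)$ gives $\widehat f(\xi)=\int_0^\xi\widehat f{}'(s)\,ds$ type control; combined with the explicit kernel of $T_{|\eta|^2}$ on the Fourier side (convolution with $\widehat{|\eta|^2}(\xi)=\pi e^{-|\xi|}$), the inequality reduces to $\int_0^\infty\!\!\int_0^\infty \widehat f(\xi)\overline{\widehat f(\zeta)}\,e^{-|\xi-\zeta|}\,d\xi d\zeta \le \tfrac1{8}\int_0^\infty \xi|\widehat f(\xi)|^2 d\xi$, valid on the hyperplane $\int_0^\infty e^{-\xi}\widehat f(\xi)d\xi=0$ coming from $f\perp\eta$. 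This is a one-dimensional spectral problem for an explicit self-adjoint operator and can be settled by diagonalizing the kernel $e^{-|\xi-\zeta|}$ (its resolvent is a Schrödinger operator), which is the step I expect to be the main obstacle: one must check that the largest eigenvalue of the relevant quadratic form, after the rank-one projection, is exactly $\tfrac18$ (or smaller), so that no constant better than $\tfrac14$ is needed. I would carry out this computation using the known fact that $\eta$ saturates the ground-state relation \eqref{eqn:eta}, which forces the borderline eigenvalue to correspond precisely to the direction we have removed, so that coercivity with constant $\tfrac14$ holds with room to spare on the orthogonal complement.
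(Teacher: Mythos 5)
Your proposal has a genuine gap, and it starts from an incorrect fact about $H_{\eta^2}$ that makes the rest of the plan unworkable as written.

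First, $H_{\eta^2}$ does \emph{not} have rank one with range spanned by $\eta$. You appear to be thinking of $H_{\eta}$, for which $H_{\eta}\eta=\tfrac{i}{2}\eta$. For $H_{\eta^2}$ one computes $H_{\eta^2}\eta=\tfrac14\eta+\tfrac{i}{2}\eta^{2}$, which is not a multiple of $\eta$. By the Kronecker-type theorem the paper cites, $\eta^{2}\in\M(2)$, so $H_{\eta^{2}}$ has $\C$-rank two; the paper's own proof of Proposition~\ref{prop:coercivity} records $\textup{Ran}(H_{\eta^2})=\textup{span}_{\R}\{\eta,i\eta,-i\partial_x\eta,i\partial_x(x\eta)\}$, and correspondingly $\textup{Ker}(H_{\eta^2})=\bigl(\tfrac{x-i}{x+i}\bigr)^{2}L^2_+$ has complex codimension two. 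Your subsequent reduction to a Hardy-type bound on a single hyperplane $\{\,\int_0^\infty e^{-\xi}\widehat f(\xi)\,d\xi=0\,\}$ therefore restricts to the wrong subspace, and the constraint set you propose is too small by one complex dimension.

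Second, and more importantly, you miss the structural observation the paper actually exploits: writing $f=\theta^{2}h$ with $\theta(x)=\tfrac{x-i}{x+i}$ and $h\in H^{1/2}_+$, one has the exact identity
\begin{align*}
\Ll\bigl(\theta^{2}h\bigr)=\theta^{2}\Bigl(-\tfrac{i}{2}\partial_xh+\tfrac14 h\Bigr),
\end{align*}
because the cross term produced by $\partial_x(\theta^{2}h)$ exactly cancels the Toeplitz term $-2T_{|\eta|^2}(\theta^{2}h)=-2\tfrac{x-i}{(x+i)^3}h$. Since $|\theta|=1$ on $\R$, the quadratic form $\jb{\Ll f,f}$ then becomes $\jb{-\tfrac{i}{2}\partial_xh+\tfrac14h,h}$, which is manifestly nonnegative and controls $\|h\|_{H^{1/2}_+}^2$. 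This algebraic cancellation makes the nonlocal piece $T_{|\eta|^2}$ disappear entirely; your plan instead tries to \emph{dominate} $\jb{T_{|\eta|^2}f,f}$ by $\tfrac18\|f\|_{\dot H^{1/2}_+}^2$ on a constrained subspace, which is a much harder spectral problem for the kernel $e^{-|\xi-\zeta|}$ that you explicitly leave open. Even with the corrected codimension-two constraint, establishing that bound with a sharp enough constant would require real additional work and is not obviously true as stated; the proposal as written does not constitute a proof. The fix is to use the Blaschke factorization of $\textup{Ker}(H_{\eta^2})$ rather than a frequency-side Hardy inequality.
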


\begin{proof}
Since $\eta(x)=\frac{1}{x+i}$, we have that
$\textup{Ker}(H_{\eta^2})=\Big(\frac{x-i}{x+i}\Big)^2L^2_+$.
Let \\ $f\in \textup{Ker}(H_{\eta^2})\cap H^{\frac{1}{2}}_+$,
$f=\Big(\frac{x-i}{x+i}\Big)^2h$, where $h\in H^{\frac{1}{2}}_+$.
Then
\begin{align*}
T_{|\eta|^2}f=\Pi\Big(\frac{1}{(x+i)(x-i)}(\frac{x-i}{x+i})^2h\Big)
=\Pi\Big(\frac{x-i}{(x+i)^3}h\Big)=\frac{x-i}{(x+i)^3}h
\end{align*}

\noi
and
\begin{align*}
\Ll(f)&=-\frac{i}{2}\partial_xf-2T_{|\eta|^2}f-H_{\eta^2}f+\frac{1}{4}f\\
&=2\frac{x-i}{(x+i)^3}h-\frac{i}{2}\Big(\frac{x-i}{x+i}\Big)^2\partial_xh
-2\frac{x-i}{(x+i)^3}h+\frac{1}{4}\Big(\frac{x-i}{x+i}\Big)^2h\\
&=\Big(\frac{x-i}{x+i}\Big)^2(-\frac{i}{2}\partial_xh+\frac{1}{4}h),
\end{align*}

\noi
and thus, using $|\frac{x-i}{x+i}|=1$ and the Plancherel identity, we obtain
\begin{align*}
\jb{\Ll(f),f}&=\jb{\Big(\frac{x-i}{x+i}\Big)^2(-\frac{i}{2}\partial_xh
+\frac{1}{4}h),\Big(\frac{x-i}{x+i}\Big)^2h}
=\jb{-\frac{i}{2}\partial_xh+\frac{1}{4}h,h}\\
&=\frac{1}{2}\int_0^{\infty}\xi|\hat{h}(\xi)|d\xi
+\frac{1}{4}\|f\|_{L^2}^2\geq \frac{1}{4}\|f\|_{H^{\frac{1}{2}}_+}^2.
\end{align*}
\end{proof}

In what follows we need a Kronecker-type theorem
characterizing the Hankel operators of finite rank. We state this theorem bellow. For the
proof we refer to \cite{pocov1}.
\begin{theorem}[\cite{pocov1}]
The Hankel operator $H_u$ has finite rank $N$ if and only if $u$  is a rational function
which belongs to $\M(N)$, where
\[\M(N)=\bigg\{ \frac{A(z)}{B(z)}\in L^2_+\Big | \deg(B)=N,\, \deg(A)\leq N-1,\, B(0)=1,\,p.g.c.d.(A,B)=1\bigg\}.\]

Moreover, if $u\in \M(N)$, $u(z)=\frac{A(z)}{B(z)}$, where $B(z)=\prod_{j=1}^{J}(z-p_j)^{m_j}$,
 with $\sum_{j=1}^Jm_j=N$ and $\textup{Im} p_j<0$ for all $j=1,2,...,J$,
 then the range of $H_u$ is given by
\begin{equation*}
\textup{Ran}\,H_u=\textup{span}_{\C}\bigg\{\frac{1}{(z-p_j)^{m}}, 1\leq m\leq m_j\bigg\}_{j=1}^J
\end{equation*}

\end{theorem}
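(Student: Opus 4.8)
The plan is to prove the Kronecker-type theorem in two directions. First, I would verify that if $u \in \M(N)$ then $H_u$ has finite rank at most $N$. Writing $u(z) = A(z)/B(z)$ with $B(z) = \prod_{j=1}^J (z-p_j)^{m_j}$, a partial fraction decomposition expresses $u$ as a $\C$-linear combination of the functions $(z-p_j)^{-m}$ for $1 \le m \le m_j$, which span an $N$-dimensional subspace of $L^2_+$. For each such basis function, I would compute $H_u(h) = \Pi(u\bar h)$ and observe that $\Pi(\overline{h}/(z-p_j)^m)$ again lies in $\mathrm{span}_\C\{(z-p_j)^{-\ell} : 1 \le \ell \le m_j\}$: indeed, since $\mathrm{Im}\, p_j < 0$, the function $1/(\bar z - \bar p_j)^m$ has poles only in the upper half-plane, and multiplying by $\bar h \in L^2_-$ and projecting to $L^2_+$ picks out precisely the principal parts at the points $p_j$ (this is a residue computation using the Cauchy integral formula for $\Pi$). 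Hence $\mathrm{Ran}\, H_u \subset \mathrm{span}_\C\{(z-p_j)^{-m} : 1 \le m \le m_j\}_{j=1}^J$, which has dimension $N$, so $\mathrm{rank}\, H_u \le N$.

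Next I would establish the reverse inclusion and the converse implication together. Suppose $H_u$ has finite rank. Using the shift operator $S: h \mapsto e^{ix}h$ (or, on the half-line, multiplication by the inner function $\frac{x-i}{x+i}$) and the intertwining identity relating $H_u$ with $S$, one shows $\mathrm{Ran}\, H_u$ is invariant under an appropriate operator, and by Beurling's theorem a finite-dimensional $S^*$-invariant subspace of $L^2_+$ must be spanned by functions of the form $(z-p_j)^{-m}$ with $\mathrm{Im}\, p_j < 0$. Applying $H_u$ to $\eta$ itself — or using the relation $u = H_u(\text{const})$ appropriately normalized — shows $u$ itself lies in this span, hence $u$ is rational of the stated form, and counting dimensions forces $u \in \M(N)$ with $\mathrm{rank}\, H_u = N$ exactly. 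The precise bookkeeping that $\deg A \le N-1$, $\deg B = N$, $B(0)=1$, and $\mathrm{p.g.c.d.}(A,B)=1$ comes from the normalization $u \in L^2_+$ (decay at infinity forces $\deg A < \deg B$) together with coprimality (no cancellation, so the rank equals $\deg B$).

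For the description of the range: once we know $\mathrm{Ran}\, H_u \subset \mathrm{span}_\C\{(z-p_j)^{-m}\}$ and $\dim \mathrm{span} = N = \mathrm{rank}\, H_u$, equality of the two spaces is automatic. Alternatively, one checks directly that each $(z-p_j)^{-m}$ with $1 \le m \le m_j$ is hit: differentiating the identity $H_u\big((\,\overline{\cdot - p_j}\,)^{-1}\big)$ in the parameter $\bar p_j$ produces the higher-order poles, and non-degeneracy of this family follows from $\mathrm{p.g.c.d.}(A,B)=1$ (if some $(z-p_j)^{-m}$ were not in the range, $A$ and $B$ would share the factor $(z-p_j)$).

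The main obstacle I anticipate is the converse direction — showing finite rank of $H_u$ \emph{forces} $u$ to be rational of this precise shape. The cleanest route is via Beurling's invariant subspace theorem applied to $\mathrm{Ran}\, H_u$ (or its orthogonal complement $\mathrm{Ker}\, H_u$, which by \eqref{sym H_u 3} is related to the range), combined with the observation that a proper finite-dimensional invariant subspace corresponds to a finite Blaschke product; translating this from the disc model to the half-plane model of $L^2_+(\R)$ requires care with the conformal map, and tracking how $H_u$ transforms under it. Since the excerpt explicitly defers the proof to \cite{pocov1}, I would present this as the structural outline and cite the reference for the detailed half-plane computations.
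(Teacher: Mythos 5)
The paper does not actually prove this Kronecker-type theorem: it states it and explicitly defers to \cite{pocov1} (``For the proof we refer to \cite{pocov1}''). So there is no in-paper proof to compare your attempt against, and your closing remark---that you would present the outline and cite the reference for the half-plane details---is in fact exactly what the paper itself does.

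As an outline, your plan follows the standard route (partial fractions plus a residue computation for the easy inclusion; Beurling--Lax invariant subspaces for the converse; dimension count for equality of the range), and that is the expected shape of the argument. One specific point worth fixing before you try to make this rigorous: the ``$u = H_u(\text{const})$'' device is a disc-model trick that does not transfer to $L^2_+(\R)$, since constants do not lie in $L^2(\R)$; likewise $H_u(\eta) = \Pi(u\bar\eta)$ does not return $u$ itself but a rational function with the same pole set and altered principal parts. The cleaner way to conclude that $u$ is rational in the converse direction is either to note that, once $\mathrm{Ker}\,H_u = \Theta L^2_+$ with $\Theta$ a finite Blaschke product of the half-plane, the identity $\Pi(u\bar h)=0$ for all $h\in\Theta L^2_+$ forces the positive-frequency part of $u\bar\Theta$ to vanish, which pins $u$ down as a rational function with denominator dividing $\prod (z-p_j)^{m_j}$; or to work directly in Fourier variables, where $H_u$ has kernel $\hat u(\xi+\zeta)$ on $(0,\infty)^2$, so finite rank forces $\hat u$ to satisfy a constant-coefficient linear ODE on $(0,\infty)$, hence to be an exponential polynomial with decaying exponents, hence $u$ rational with poles in $\C_-$. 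Either repair makes the converse genuinely complete; as written, that step is the one real gap in the sketch.
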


\begin{proposition}\label{prop:coercivity}
If $w\in H^{\frac{1}{2}}_+$ is such that
$\omega(w,X\cdot\eta)=0$, for all $X\in\g$, then
\begin{align*}
\jb{\Ll w,w}\geq \frac{1}{4}\|w\|_{H^{\frac{1}{2}}_+}^2.
\end{align*}
\end{proposition}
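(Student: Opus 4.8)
The plan is to reduce the coercivity statement on the full symplectically orthogonal complement of $T_\eta M$ to the coercivity statement on $\operatorname{Ker}(H_{\eta^2})$ already proved in Lemma \ref{lemma:Ker}. The main point is that the condition $\omega(w, X\cdot\eta) = 0$ for all $X \in \g$ is a finite-codimension constraint, and we must show it is ``enough'' to beat the finitely many bad directions of $\Ll$. First I would analyze the quadratic form $\jb{\Ll w, w}$ by splitting $w$ according to the range of the Hankel operator $H_{\eta^2}$. Since $\eta^2 = 1/(x+i)^2 \in \M(2)$ with a single double pole at $p = -i$, the Kronecker-type theorem quoted above gives $\operatorname{Ran} H_{\eta^2} = \operatorname{span}_\C\{\,\frac{1}{x+i},\ \frac{1}{(x+i)^2}\,\}$, a two-dimensional complex (hence four-dimensional real) subspace, and $\operatorname{Ker}(H_{\eta^2}) = \big(\frac{x-i}{x+i}\big)^2 L^2_+$ is its orthogonal complement in $L^2_+$.

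Next I would write $w = w_0 + w_1$, where $w_1$ is the orthogonal projection of $w$ onto the finite-dimensional space $V := \operatorname{Ran} H_{\eta^2} = \operatorname{span}_\R\{\eta,\, i\eta,\, \eta^2,\, i\eta^2\}$ (using $\tfrac{1}{x+i} = \eta$ and $\tfrac{1}{(x+i)^2}$, which is $i\partial_x\eta$, so $V$ is exactly spanned by suitable combinations of $e_j\cdot\eta$) and $w_0 \in \operatorname{Ker}(H_{\eta^2})$. The key algebraic observation is that $V$ is essentially $T_\eta M = \g\cdot\eta$: indeed $e_2\cdot\eta = \eta$, $e_3\cdot\eta = i\eta$, $e_1\cdot\eta = -\partial_x\eta$, and $e_4\cdot\eta = \partial_x(x\eta)$, and since $\partial_x\eta = -\eta^2$ and $\partial_x(x\eta) = \eta - x\eta^2 = \eta + (i - (x+i))\eta^2 = (1 - i\eta^{-1}\cdot(\text{const}))\dots$ — more cleanly, one checks directly that $\{e_j\cdot\eta\}_{j=1}^4$ spans the same real $4$-dimensional space as $\{\eta, i\eta, \eta^2, i\eta^2\}$, so $\g\cdot\eta = \operatorname{Ran} H_{\eta^2}$. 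Hence the hypothesis $\omega(w, X\cdot\eta) = 0$ for all $X\in\g$ says precisely that $w$ is symplectically orthogonal to $V$. Because $\jb{v, u} = \omega(v, iu)$ and $V$ is invariant under multiplication by $i$, symplectic orthogonality to $V$ is equivalent to real orthogonality to $V$; therefore $w_1 = 0$ and $w = w_0 \in \operatorname{Ker}(H_{\eta^2})$. Lemma \ref{lemma:Ker} then applies directly and gives $\jb{\Ll w, w} \geq \tfrac14 \|w\|_{H^{1/2}_+}^2$.

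The step I expect to be the main obstacle is the precise identification $\g\cdot\eta = \operatorname{Ran} H_{\eta^2}$ together with the $i$-invariance argument that upgrades symplectic orthogonality to honest $L^2$-orthogonality; one must be careful that the four real vectors $e_j\cdot\eta$ are linearly independent and span exactly the real span of $\{\eta, i\eta, \eta^2, i\eta^2\}$, and that no information is lost in passing from the symplectic condition to the Hermitian one (this uses that $V$ is a complex subspace, i.e. closed under $u \mapsto iu$, which holds because both $\eta$ and $\eta^2 = i\partial_x\eta$ generate $V$ over $\C$). Once this linear-algebra bookkeeping is done, the analytic content is entirely contained in Lemma \ref{lemma:Ker}, and the proof is complete. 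An alternative, more hands-on route would be to expand $\jb{\Ll w, w}$ using $\Ll w = -\tfrac{i}{2}\partial_x w - 2T_{|\eta|^2}w - H_{\eta^2}w + \tfrac14 w$, diagonalize on $\operatorname{Ker}(H_{\eta^2})$ as in Lemma \ref{lemma:Ker}, and treat the orthogonal directions perturbatively; but the projection argument above is cleaner and I would present that one.
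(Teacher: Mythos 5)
Your proposal is correct and follows essentially the same route as the paper: identify $\operatorname{Ran}(H_{\eta^2})$ via the Kronecker-type theorem, observe that it coincides (as a real span) with $\g\cdot\eta$, upgrade symplectic orthogonality to Hermitian orthogonality using the $i$-invariance of this subspace (equivalently, the $\C$-antilinearity of $H_{\eta^2}$), conclude $w\in\operatorname{Ker}(H_{\eta^2})$, and apply Lemma \ref{lemma:Ker}. The only cosmetic difference is that the paper verifies the equivalence $\omega(w,e_j\cdot\eta)=\jb{w,ie_j\cdot\eta}$ component-by-component rather than via a projection decomposition, and it does not explicitly state the identity $\g\cdot\eta=\operatorname{Ran}(H_{\eta^2})$ as a clean intermediate claim the way you do.
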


\begin{proof}
By the Kronecker-type theorem, we have that the range $\textup{Ran}\,H_{\eta^2}$
is generated by all the fractions having as a numerator a complex number and as a denominator a factor of
$\eta^2$. More precisely,
\begin{align*}
\textup{Ran}(H_{\eta^2})&=\textup{span}_{\R}\Big\{\frac{1}{x+i},\frac{i}{x+i},
\frac{1}{(x+i)^2},\frac{i}{(x+i)^2}\Big\}\\
&=\textup{span}_{\R}\{\eta,i\eta,-i\partial_x\eta,i\partial_x(x\eta)\}\\
&=\textup{span}_{\R}\{ie_1\cdot\eta, ie_2\cdot\eta, ie_3\cdot\eta, ie_4\cdot\eta \}.
\end{align*}

\noi
On the other hand, we have that $\omega(w,X\cdot \eta)=0$ for all $X\in\g$, which is equivalent to
\[0=\omega(w,e_j\cdot \eta)=\textup{Im}\int w\cj{e_j\cdot\eta}dx=\textup{Re}\int w\cj{ie_j\cdot\eta}dx
=\jb{w,ie_j\cdot\eta},\]

\noi
for $j=1,2,3,4$.
Thus $w$ belongs to the orthogonal of $\textup{Ran}(H_{\eta^2})$
with respect to the real scalar product. Since $H_{\eta^2}$ is $\C$-antilinear,
$w$ belongs also to the orthogonal with respect to the Hermitian inner product
in $L^2$, which is $\textup{Ker}(H_{\eta^2})$. Hence
\\ $w\in\textup{Ker}(H_{\eta^2})\cap H^{\frac{1}{2}}_+$. By Lemma
\ref{lemma:Ker}, the conclusion then follows.
\end{proof}

\section{Main estimates}

In this section we estimate $w$, the part of the flow
which is symplectically orthogonal to the manifold of solitons,
and prove that it is small.
\begin{lemma}\label{main estimate}
If the solution of the perturbed Szeg\"o equation \eqref{Szego T} can be
reparametrized as in Lemma \ref{lemma: def w}, $u(t)=g(t)\cdot (\eta+w(t))$
on a time interval $(t_1,t_2)$, $\frac{\mu_0}{2}\leq\mu(t)\leq\frac{\mu_0}{2}$,
and $w(t)$ is small enough in the $H^{\frac{1}{2}}_+$-norm,
 then the following estimate holds
\begin{align*}
\frac{1}{2}\big|\partial_t\jb{\Ll w,w}\big|\leq c\eps\|w\|_{H^{\frac{1}{2}}_+}
+c\eps\|w\|_{H^{\frac{1}{2}}_+}^2
+c\|w\|_{H^{\frac{1}{2}}_+}^3+c\|w\|_{H^{\frac{1}{2}}_+}^5,
\end{align*}

\noi
where $c$ is a constant depending on $\alpha_0$ and $\mu_0$.
\end{lemma}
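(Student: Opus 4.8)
The plan is to compute $\partial_t\jb{\Ll w,w}$ directly by differentiating through the bilinear form and using the equation for $w$ from Proposition \ref{Prop eqn w}. Since $\Ll$ is a fixed (time-independent) self-adjoint operator with respect to the real scalar product $\jb{\cdot,\cdot}$, we have $\partial_t\jb{\Ll w,w}=2\jb{\Ll w,\partial_t w}$. I would then substitute the right-hand side of the $w$-equation, which is a sum of three groups of terms: the $\eta$-terms $\big(-X\eta-i\eps\Pi(b(a+\tfrac{x}{\mu})\eta)+2Be_1\cdot\eta-Ce_2\cdot\eta+(A+B)e_3\cdot\eta+2Ce_4\cdot\eta\big)$; the $w$-terms of the same shape; and the quadratic/cubic part $i\alpha^2\mu^2\Ll w-i\alpha^2\mu^2\Nn w$. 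The strategy is to bound the contribution of each group to $\jb{\Ll w,\partial_t w}$ separately, using the coercivity/boundedness of $\Ll$ and the estimates on $A,B,C,X$ already established.

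The key observation making this work is that the dangerous-looking term $i\alpha^2\mu^2\jb{\Ll w,\Ll w}$ is real and positive, but it is paired with its $-i\alpha^2\mu^2\jb{\Ll w,\Ll w}$ counterpart coming from the relation $\jb{\Ll w, i\Ll w}=\textup{Re}\int(\Ll w)\overline{i\Ll w}\,dx=\textup{Re}\,(-i)\int|\Ll w|^2\,dx=0$; more precisely, since $\Ll$ is self-adjoint and real-linear, $\jb{\Ll w, i\Ll w}=0$, so the quadratic term in $w$ contributes nothing. Likewise $\jb{\Ll w,-i\eta'}$-type contractions with the $e_j\cdot\eta$ terms will be handled using $\jb{\Ll w,X\cdot\eta}=\jb{w,\Ll(X\cdot\eta)}=0$, which follows from the proof of Lemma \ref{lemma:X} — the same cancellation that shows $P(-i\Ll w)=0$. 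Thus the $\eta$-terms contribute only through $\jb{\Ll w,-X\eta}$ minus what is absorbed, and one is left with $|\jb{\Ll w,-i\eps\Pi(b(a+\tfrac x\mu)\eta)}|$ plus lower-order pieces. For the $w$-terms of Toeplitz/Lie-algebra type, one estimates $|\jb{\Ll w, Ye_j\cdot w}|$ and $|\jb{\Ll w,-i\eps\Pi(b w)}|$ by $\|\Ll w\|_{H^{-1/2}_+}$ (which is $\lesssim\|w\|_{H^{1/2}_+}$) times the $H^{1/2}_+$ norm of the argument, producing terms $\eps\|w\|_{H^{1/2}_+}^2$. For the nonlinearity, $|\jb{\Ll w, i\Nn w}|\lesssim\|w\|_{H^{1/2}_+}\,\|\Nn w\|_{H^{1/2}_+}\lesssim\|w\|_{H^{1/2}_+}^3+\|w\|_{H^{1/2}_+}^4$, using the Sobolev embedding $H^{1/2}\subset L^p$ exactly as in Lemma \ref{lemma:estimatesP}.

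So the key steps, in order, are: (1) write $\tfrac12\partial_t\jb{\Ll w,w}=\jb{\Ll w,\partial_t w}$ and insert the $w$-equation; (2) discard $i\alpha^2\mu^2\jb{\Ll w,\Ll w}$ by the $\jb{\Ll w, i\Ll w}=0$ identity; (3) use $\jb{\Ll w,X\cdot\eta}=0$ for all $X\in\g$ to kill the $-X\eta$ contribution and the $e_j\cdot\eta$ contributions, leaving only $|\jb{\Ll w,-i\eps\Pi(b(a+\tfrac x\mu)\eta)}|\lesssim\eps\|w\|_{H^{1/2}_+}$ after using $\|b\|_{L^\infty}\lesssim 1$ (this is where one must check that $\Pi(b(a+\tfrac x\mu)\eta)$ is bounded in $H^{1/2}_+$ uniformly, using $b\in H^1$ and the hypothesis $\tfrac{\mu_0}{2}\le\mu\le\tfrac{3\mu_0}{2}$; note also that one can alternatively route this through $P$ and Lemma \ref{lemma:X} since the combination with $A,B,C$ in $X$ is exactly the projection); (4) estimate the $w$-type linear terms by $c\eps\|w\|_{H^{1/2}_+}^2$ using $|A|,|B|,|C|\lesssim\eps$ and the boundedness of Toeplitz operators and of $e_j\cdot$ on the relevant spaces; (5) estimate $\alpha^2\mu^2|\jb{\Ll w,i\Nn w}|\lesssim\|w\|_{H^{1/2}_+}^3+\|w\|_{H^{1/2}_+}^5$ via Sobolev embedding and $\alpha^2\mu\le\alpha_0^2\mu_0$, $\mu\le\tfrac{3\mu_0}{2}$; (6) collect terms. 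The main obstacle is step (3)–(4) bookkeeping: one has to be careful that $\Ll w\in H^{-1/2}_+$ only, so every pairing $\jb{\Ll w,v}$ must have $v\in H^{1/2}_+$, which forces one to integrate by parts to move the half-derivative in $\Ll$ onto the smooth, explicit functions $e_j\cdot\eta$ and onto $b(a+\tfrac x\mu)\eta$ (legitimate since these are smooth and decaying), rather than onto $w$; the term $H_{\eta^2}w$ inside $\Ll w$ needs the antilinearity bookkeeping but is bounded $L^2\to L^2$, so it is harmless. Once these pairings are set up correctly, each estimate is routine and the stated bound follows.
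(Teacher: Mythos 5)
Your overall plan matches the paper's: expand $\tfrac12\partial_t\jb{\Ll w,w}=\jb{\Ll w,\partial_t w}$, insert the $w$-equation from Proposition \ref{Prop eqn w}, kill the quadratic term $\jb{\Ll w,i\alpha^2\mu^2\Ll w}$ by $\jb{v,iv}=0$, and estimate the remaining pieces. Your observation in step~(3) that $\jb{\Ll w,X\cdot\eta}=0$ for all $X\in\g$ (a byproduct of the proof that $P(-i\Ll w)=0$ in Lemma~\ref{lemma:X}) is even a bit slicker than the paper's treatment of the corresponding terms, which bounds them directly by Cauchy--Schwarz.

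However, steps (4) and (5) contain a genuine gap. You propose to bound the $w$-against-$w$ pairings by duality, $|\jb{\Ll w,v}|\lesssim\|\Ll w\|_{H^{-1/2}_+}\|v\|_{H^{1/2}_+}$, invoking "boundedness of $e_j\cdot$ on the relevant spaces." But $e_1\cdot w=-\partial_xw$ and $e_4\cdot w=\partial_x(xw)$ are only in $H^{-1/2}_+$ when $w\in H^{1/2}_+$, so $\|e_j\cdot w\|_{H^{1/2}_+}$ is not controlled for $j=1,4$. The same problem recurs in step~(5): $H^{1/2}(\R)$ is not an algebra, and $\|\Nn w\|_{H^{1/2}_+}$ is \emph{not} $\lesssim\|w\|^3_{H^{1/2}_+}+\|w\|^2_{H^{1/2}_+}$, so $|\jb{\Ll w,i\Nn w}|\lesssim\|w\|_{H^{1/2}_+}\|\Nn w\|_{H^{1/2}_+}$ is not available. (The bound you quote from Lemma~\ref{lemma:estimatesP} pairs $i\Nn w$ against the fixed smooth functions $e_j\cdot\eta$, which is much weaker than what you need here.)

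The point that your proposal misses is that the offending pairings do not merely need to be bounded; they \emph{vanish identically} thanks to the real scalar product. Concretely, when the $-\tfrac{i}{2}\partial_xw$ piece of $\Ll w$ meets $e_1\cdot w$, $e_4\cdot w$, or the cubic part of $\Nn w$, one gets
\begin{align*}
\jb{-\tfrac{i}{2}\partial_xw,\;a_1\partial_xw}&=\textup{Re}\Big(-\tfrac{i a_1}{2}\int|\partial_xw|^2dx\Big)=0,\\
\jb{-\tfrac{i}{2}\partial_xw,\;a_4\,x\partial_xw}&=\textup{Re}\Big(-\tfrac{i a_4}{2}\int x|\partial_xw|^2dx\Big)=0,\\
\jb{-\tfrac{i}{2}\partial_xw,\;i|w|^2w}&=-\tfrac{1}{8}\int\partial_x(|w|^4)\,dx=0,
\end{align*}
since $a_j\in\R$ and the integrands on the right are real. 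After these exact cancellations (applied term by term, with the usual density approximation since $w\in H^{1/2}_+$ only), the remaining pieces have at most half a derivative falling on $w$ or else hit the smooth decaying $\eta$, and your Sobolev-embedding estimates do apply and give the stated bound. You invoke the real scalar product to kill the $\jb{\Ll w,i\Ll w}$ term (step~2), but the same mechanism is equally indispensable in steps~(4) and (5), and your proposal does not identify it there; a pure duality/Sobolev argument would not close.
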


\begin{proof}
We have that
\begin{align*}
\frac{1}{2}\partial_t\jb{\Ll w&,w}=\jb{\Ll w,\partial_t w}\\
=&\jb{\Ll w, -X\eta}\\
&+\jb{\Ll w, -i\eps\Pi\big(b(a+\frac{x}{\mu})\eta\big)
+2B e_1\cdot\eta-C e_2\cdot\eta
+(A+B)e_3\cdot\eta+2C e_4\cdot\eta}\\
&+\jb{\Ll w, -X w}\\
&+\jb{\Ll w, -i\eps\Pi\big(b(a+\frac{x}{\mu})w\big)
+2B e_1\cdot w-C e_2\cdot w
+(A+B)e_3\cdot w+2C e_4\cdot w)}\\
&+\jb{\Ll w,i\alpha^2\mu^2\mathcal{L}w}-
\jb{\Ll w,i\alpha^2\mu^2\mathcal{N}w}\\
=&\mathrm{I+II+III+IV+V+VI}
\end{align*}

\noi
and we will estimate each of the six terms.
The challenge is to deal with the terms containing $\partial_xw$
since we only have $w\in H^{\frac{1}{2}}_+(\R)$. In what follows we focus
on such terms, the rest of the terms being easier to handle.

We set $X=\sum_{j=1}^4a_je_j$. By Lemma \ref{lemma:X}, we have that
\begin{align*}
|a_j|\leq c(\eps\|w\|_{L^2}+\|w\|_{H^{\frac{1}{2}}_+}^2+\|w\|_{H^{\frac{1}{2}}_+}^3).
\end{align*}

\noi
For $\mathrm{I}$, we integrate by parts
\begin{align*}
\textup{Re}\Big(\bar{a}_1\int \frac{i}{2}\partial_x w\cj{e_1\cdot\eta}dx\Big)
=\textup{Re}\Big(\bar{a}_1\int \frac{i}{2} w \partial_x^2\bar{\eta} dx\Big)
\end{align*}

\noi
and apply the Cauchy-Schwarz inequality for each term. We obtain
\begin{align*}
|\mathrm{I}|\leq c\|X\|\|w\|_{L^2}\leq c(\eps\|w\|^2_{L^2}+\|w\|_{H^{\frac{1}{2}}_+}^3
+\|w\|_{H^{\frac{1}{2}}_+}^4).
\end{align*}

\noi
For II, integrating by parts and using Cauchy-Schwarz, we have
\begin{align*}
\textup{Re}\Big(\int\frac{i}{2}\partial_x w\cdot i\cj{\eps b(a+\frac{x}{\mu})}\bar{\eta}dx\Big)&
=\frac{\eps}{2}\textup{Re}\Big(\int w \cj{b'(a+\frac{x}{\mu})}\frac{1}{\mu}\bar{\eta}dx
+\int w \cj{b(a+\frac{x}{\mu})}\bar{\eta}'dx\Big)\\
&\leq c\eps\|w\|_{L^2}\|\eta\|_{L^{\infty}}\|b'\|_{L^{2}}\frac{\mu^{1/2}}{\mu}
+c\eps\|w\|_{L^2}\|\eta'\|_{L^2}\|b\|_{L^{\infty}}\\
&\leq c\eps (1+\frac{1}{\mu^{1/2}})\|w\|_{L^2}.
\end{align*}

\noi
Using the equation \eqref{estimate ABC} for the rest of the terms, we obtain
\begin{align*}
|\mathrm{II}|\leq c\eps \|w\|_{L^2}.
\end{align*}

\noi
For III and IV we analyze each term. Besides integrating by parts and using \\
Cauchy-Schwarz or H\"older inequalities, a key ingredient is the fact that
 we deal with the real scalar product.
\begin{align*}
\mathrm{III}&=\jb{\Ll w,-X w}=\textup{Re}\Big(\sum_{j=1}^4a_j\int\frac{i}{2}\partial_x w\cj{e_j\cdot w}dx
+2\sum_{j=1}^4a_j\int|\eta|^2w\cj{e_j\cdot w}dx\\
&+\sum_{j=1}^4a_j\int\eta^2\cj{w}\cj{e_j\cdot w}dx
-\frac{1}{4}\sum_{j=1}^4a_j\int w\cj{e_j\cdot w}dx\Big)=\mathrm{(i)+(ii)+(iii)+(iv)}.
\end{align*}

\noi
Then
\begin{align*}
\mathrm{(i)}&=\textup{Re}\Big(-a_1\frac{i}{2}\int |\partial_x w|^2
+a_2\frac{i}{2}\int\partial_x w\cj{w}dx+\frac{a_3}{4}\int\partial_x(|w|^2)dx\\
&\hphantom{XXXX}+a_4\frac{i}{2}\int\partial_x w\cj{w}dx
+a_4\frac{i}{2}\int x|\partial_x w|^2dx\Big)\\
&=-\frac{a_2+a_4}{2}\int\frac{1}{i}\partial_x w\cj{w}dx
=-\frac{a_2+a_4}{2}\int_0^{\infty}\xi|\hat{w}(\xi)|^2d\xi\\
&=-\frac{a_2+a_4}{2}\|w\|_{\dot{H}^{1/2}_+}^2\leq \|X\|\|w\|_{H^{\frac{1}{2}}_+}^2,
\end{align*}

\noi
by the H\"older inequality we have
\begin{align*}
\mathrm{(ii)}&=2\textup{Re}\Big(-a_1\int |\eta|^2w\partial_x\cj{w}dx
+a_2\int |\eta|^2|w|^2dx-a_3i\int |\eta|^2|w|^2dx\\
&\hphantom{XXXX}+a_4\int |\eta|^2|w|^2dx+a_4\int|\eta|^2xw\partial_x\cj{w}dx\Big)\\
&=-a_1\int |\eta|^2\partial_x(|w|^2)dx+2(a_2+a_4)\int |\eta|^2|w|^2dx
+a_4\int |\eta|^2x\partial_x(|w|^2)dx\\
&\leq c\|X\|\|w\|_{L^2}^2
\end{align*}

\noi
similarly
\begin{align*}
\mathrm{(iii)}&=\textup{Re}\Big(-\frac{a_1}{2}\int \eta^2\partial_x(\cj{w}^2)dx
+a_2\int \eta^2\cj{w}^2dx-a_3i\int \eta^2\cj{w}^2dx\\
&\hphantom{XXXX}+a_4\int \eta^2\cj{w}^2dx+\frac{a_4}{2}\int\eta^2x\partial_x(\cj{w}^2)dx\Big)\\
&\leq c\|X\|\|w\|_{L^2}^2,
\end{align*}

\noi
and
\begin{align*}
\mathrm{(iv)}&=-\frac{1}{4}\textup{Re}\Big(-a_1\int w\partial_x\cj{w}dx+a_2\int |w|^2dx-a_3i\int |w|^2dx\\
&\hphantom{XXXX}+a_4\int |w|^2dx+\frac{a_4}{2}\int xw\partial_x\cj{w}dx\Big)\\
&=-\frac{1}{4}\Big(-\frac{a_1}{2}\int \partial_x(|w|^2)dx+(a_2+a_4)\int |w|^2dx+\frac{a_4}{2}\int x\partial_x(|w|^2)dx\Big)\\
&=-\frac{1}{4}(a_2+\frac{a_4}{2})\int |w|^2dx
\leq c\|X\|\|w\|_{L^2}^2.
\end{align*}

\noi
Hence
\begin{align*}
|\mathrm{III}|\leq c\|X\|\|w\|_{H^{\frac{1}{2}}_+}^2\leq c(\eps\|w\|^3_{H^{\frac{1}{2}}_+}
+\|w\|_{H^{\frac{1}{2}}_+}^4+\|w\|_{H^{\frac{1}{2}}_+}^5).
\end{align*}

\noi
For IV we have
\begin{align*}
&\mathrm{IV}=\textup{Re}\int \frac{i}{2}\partial_x w \Big(i\eps b(a+\frac{x}{\mu})\bar{w}
+2B\cj{e_1\cdot w}-C\cj{e_2\cdot w}
+(A+B)\cj{e_3\cdot w}+2C \cj{e_4\cdot w}\Big)dx\\
&+2\textup{Re}\int |\eta|^2w \Big(i\eps\cj{\Pi \big( b(a+\frac{x}{\mu})w\big)}
+2B\cj{e_1\cdot w}-C \cj{e_2\cdot w}
+(A+B)\cj{e_3\cdot w}+2C\cj{e_4\cdot w}\Big)dx\\
&+\textup{Re}\int \eta^2\bar{w} \Big(i\eps \cj{\Pi \big( b(a+\frac{x}{\mu})w\big)}
+2B\cj{e_1\cdot w}-C\cj{e_2\cdot w}
+(A+B)\cj{e_3\cdot w}+2C\cj{e_4\cdot w}\Big)dx\\
&-\frac{1}{4}\textup{Re}\int w \Big(i\eps b(a+\frac{x}{\mu})\bar{w}
+2B\cj{e_1\cdot w}-C\cj{e_2\cdot w}
+(A+B)\cj{e_3\cdot w}+2C\cj{e_4\cdot w}\Big)dx\\
&=\textrm{(i)+(ii)+(iii)+(iv)}.
\end{align*}

\noi
By the equations \eqref{estimate ABC} and the Sobolev embedding
$H^{\frac{1}{2}}(\R)\subset L^{p}(\R)$, $2\leq p<\infty$, we have
\begin{align*}
\textrm{(i)}=&-\frac{1}{4}\int \eps b(a+\frac{x}{\mu})\partial_x(|w|^2)dx
-B\textup{Re}\int i |\partial_xw|^2dx
-\frac{C}{2}\textup{Re}\int i\partial_xw\cj{w}dx\\
&+\frac{A+B}{4}\int\partial_x(|w|^2)dx+C\textup{Re}
\int i \partial_x w\cj{w}dx\\
=&\frac{\eps}{4\mu}\int b'(a+\frac{x}{\mu})|w|^2dx
-\frac{C}{2}\|w\|_{\dot{H}^{\frac{1}{2}}_+}^2\leq \frac{c\eps}{\mu^{1/2}}\|b'\|_{L^2}\|w\|_{L^4}^2
+c\eps\|w\|_{\dot{H}^{\frac{1}{2}}_+}^2\\
\leq &c\eps\big(1+\frac{1}{\mu^{1/2}}\big)\|w\|_{H^{\frac{1}{2}}_+}^2,
\end{align*}

\noi
For $\mathrm{(ii)}$ we only analyze the terms containing $\partial_x w$.
By the equations \eqref{estimate ABC}, we obtain
\begin{align*}
&-4B\textup{Re}\int |\eta|^2w\partial_x\cj{w}dx
+4C\textup{Re}\int |\eta|^2xw\partial_x\cj{w}dx\\
&=-2B\int |\eta|^2\partial_x(|w|^2)dx
+2C\int |\eta|^2x\partial_x(|w|^2)dx\\
&=2B\int \partial_x(|\eta|^2)|w|^2dx
-2C\int \partial_x(|\eta|^2x)|w|^2dx\\
&\leq c\eps\|w\|_{L^2}^2.
\end{align*}

\noi
Thus
\begin{align*}
\mathrm{(ii)}\leq c\eps(1+\frac{1}{\mu^{1/2}})\|w\|_{L^2}^2
\end{align*}

\noi
and similarly we obtain the same bound for $\mathrm{(iii)}$.
Computing the last term, we obtain that (iv)=0. Hence
\begin{align*}
|\mathrm{IV}|\leq c\eps(1+\frac{1}{\mu^{1/2}})\|w\|_{H^{\frac{1}{2}}_+}^2.
\end{align*}

\noi
Since we work with the real scalar product,
it follows immediately that V=0. For VI again we only analyze the terms containing
$\partial_xw$. The important step is to group together $w\bar{\eta}+\bar{w}\eta\in\R$.
\begin{align*}
&-\alpha^2\mu^2\jb{\frac{i}{2}\partial_x w, i(|w|^2w+2|w|^2\eta+w^2\cj{\eta})}\\
&=-\alpha^2\mu^2\Big(\frac{1}{4}\int|w|^2\partial_x(|w|^2))dx+\frac{1}{2}\textup{Re}\int|w|^2\partial_xw\cj{\eta}dx
+\frac{1}{2}\textup{Re}\int\partial_xw\cj{w}(w\bar{\eta}+\cj{w}\eta)dx\Big)\\
&=-\alpha^2\mu^2\Big(\frac{1}{8}\int\partial_x(|w|^4)dx
+\frac{1}{2}\textup{Re}\int|w|^2\partial_xw\cj{\eta}dx
+\frac{1}{2}\int\partial_x(|w|^2)2\textup{Re}(w\cj{\eta})dx\Big)\\
&=-\alpha^2\mu^2\Big(\frac{1}{2}\textup{Re}\int|w|^2\partial_xw\cj{\eta}dx
-\frac{1}{2}\int|w|^22\textup{Re}(\cj{\eta}\partial_x w
+w\partial_x\cj{\eta})dx\Big)\\
&=\alpha^2\mu^2\textup{Re}\int |w|^2w\partial_x\cj{\eta}dx
\leq c\alpha^2\mu^2\|w\|_{H^{\frac{1}{2}}_+}^3.
\end{align*}

\noi
For the other terms it is enough to apply the Cauchy-Schwarz
and Sobolev inequalities. Using Lemma \ref{lemma:alpha2mu} we obtain
\begin{align*}
|\mathrm{VI}|\leq c\alpha^2\mu^2(\|w\|_{H^{\frac{1}{2}}_+}^3+\|w\|_{H^{\frac{1}{2}}_+}^4)\leq c\alpha_0^2\mu_0\mu(\|w\|_{H^{\frac{1}{2}}_+}^3+\|w\|_{H^{\frac{1}{2}}_+}^4).
\end{align*}
\end{proof}

In the following, we combine the inequality in Lemma \ref{main estimate}
with the coerciveness properties of the linearized operator $\Ll$,
to obtain an estimate for $\|w\|_{H^{\frac{1}{2}}_+}$.
\begin{proposition}\label{prop:bootstrat step}
Suppose the solution of the perturbed Szeg\"o equation \eqref{Szego T} can be
reparametrized as in Lemma \ref{lemma: def w}, $u(t)=g(t)\cdot (\eta+w(t))$
on a time interval $[t_1,t_2]$ and $\frac{\mu_0}{2}\leq\mu(t)\leq \frac{\mu_0}{2}$.
Let $0<\eps\ll 1$ and $0<\dl<\frac{1}{2}$. If $|t_2-t_1|\leq \frac{1}{\eps^{\frac{1}{2}-\dl}}$ and
\begin{align*}
\|w\|_{L^{\infty}([t_1,t_2],H^{\frac{1}{2}}_+)}\leq \eps^{\frac{1}{2}},
\end{align*}

\noi
then
\begin{align*}
\|w\|_{L^{\infty}([t_1,t_2],H^{\frac{1}{2}}_+)}
\leq c_0\|w(t_1)\|_{H^{\frac{1}{2}}_+}+c_0\eps^{\frac{1+\delta}{2}},
\end{align*}

\noi
where $c_0>2$ is a constant depending only on $\alpha_0$ and $\mu_0$.
\end{proposition}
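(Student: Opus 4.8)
The plan is to combine the coerciveness estimate of Proposition~\ref{prop:coercivity} with the differential inequality of Lemma~\ref{main estimate} and then close the argument by integrating in time. First I would introduce the quantity $\Phi(t):=\jb{\Ll w(t),w(t)}$. By Proposition~\ref{prop:coercivity}, since $w(t)$ is symplectically orthogonal to $M$, we have $\tfrac14\|w(t)\|_{H^{1/2}_+}^2\leq \Phi(t)$; on the other hand, from the explicit form of $\Ll$ in \eqref{eqn:linearized} and the boundedness of $T_{|\eta|^2}$ and $H_{\eta^2}$, together with $\|w\|_{\dot H^{1/2}_+}^2\leq 2\jb{-\tfrac i2\partial_x w,w}$, one gets an upper bound $\Phi(t)\leq C\|w(t)\|_{H^{1/2}_+}^2$. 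Thus $\Phi$ is comparable to $\|w\|_{H^{1/2}_+}^2$, and it suffices to bound $\Phi$.

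Next I would integrate the estimate of Lemma~\ref{main estimate}. Writing $\nm{w}:=\|w\|_{H^{1/2}_+}$ and using the a priori bound $\nm{w(t)}\leq\eps^{1/2}$ on $[t_1,t_2]$, the right-hand side of Lemma~\ref{main estimate} is controlled by
\begin{align*}
\tfrac12\big|\partial_t\Phi(t)\big|\leq c\eps\nm{w(t)}+c\eps\nm{w(t)}^2+c\nm{w(t)}^3+c\nm{w(t)}^5
\leq c\eps\nm{w(t)}+c\eps^{1/2}\nm{w(t)}^2,
\end{align*}
where in the last step the cubic and quintic terms are absorbed using $\nm{w}\leq\eps^{1/2}$, so that $\nm{w}^3\leq\eps^{1/2}\nm{w}^2$ and $\nm{w}^5\leq\eps^{3/2}\nm{w}^2\leq\eps^{1/2}\nm{w}^2$. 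Since $\nm{w(t)}^2\leq 4\Phi(t)$ and $\nm{w(t)}\leq 2\Phi(t)^{1/2}$, this gives $\big|\partial_t\Phi\big|\leq c\eps\Phi^{1/2}+c\eps^{1/2}\Phi$, hence $\big|\partial_t\Phi^{1/2}\big|\leq \tfrac12(c\eps+c\eps^{1/2}\Phi^{1/2})$ wherever $\Phi>0$ (and the bound is trivial where $\Phi=0$). Gronwall's inequality over an interval of length $|t_2-t_1|\leq\eps^{-(1/2-\dl)}$ then yields, with $e^{c\eps^{1/2}|t_2-t_1|/2}\leq e^{c\eps^{\dl}/2}$ bounded by a constant for $\eps$ small,
\begin{align*}
\Phi(t)^{1/2}\leq e^{c\eps^{\dl}/2}\Big(\Phi(t_1)^{1/2}+\tfrac12 c\eps\,|t_2-t_1|\Big)
\leq C\Big(\Phi(t_1)^{1/2}+\eps\cdot\eps^{-(1/2-\dl)}\Big)=C\Big(\Phi(t_1)^{1/2}+\eps^{1/2+\dl}\Big).
\end{align*}

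Finally I would translate back to the $H^{1/2}_+$ norm. Using the two-sided comparison between $\Phi$ and $\nm{w}^2$, this reads $\nm{w(t)}\leq c_0\nm{w(t_1)}+c_0\eps^{(1+\dl)/2}$ for all $t\in[t_1,t_2]$ (noting $\eps^{1/2+\dl}\leq\eps^{(1+\dl)/2}$ since $\dl<1/2$), which is exactly the claimed inequality after taking the supremum over $t\in[t_1,t_2]$; the constant $c_0$ depends only on the comparison constants and the Gronwall factor, hence only on $\alpha_0,\mu_0$, and can be taken $>2$. The main obstacle is the bookkeeping in Lemma~\ref{main estimate} itself — ensuring that every term involving $\partial_x w$ is handled by integration by parts so that only $\|w\|_{\dot H^{1/2}_+}$ (and not a full derivative) appears — but that is already done in the cited lemma; here the only real care needed is to verify the absorption of the higher-order terms into $\eps^{1/2}\nm{w}^2$ using the a priori hypothesis, and to track that the Gronwall exponential stays $O(1)$ precisely because the time interval has length at most $\eps^{-(1/2-\dl)}$.
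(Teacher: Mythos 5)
Your proof is correct and follows essentially the same strategy as the paper: combine the coerciveness of $\Ll$ (Proposition~\ref{prop:coercivity}) with the differential inequality of Lemma~\ref{main estimate}, integrate over $[t_1,t_2]$, and use the a priori bound $\|w\|_{H^{1/2}_+}\leq\eps^{1/2}$ to absorb the cubic and quintic terms. The only cosmetic difference is that you close with a Gronwall argument for $\Phi^{1/2}=\jb{\Ll w,w}^{1/2}$, whereas the paper integrates directly, bounds $\jb{\Ll w(t_1),w(t_1)}\leq 4\|w(t_1)\|_{H^{1/2}_+}^2$ by inspection, and moves the single term $c\eps^{1/2+\delta}\|w\|^2$ to the left-hand side of the integrated inequality; both routes give the claimed estimate (yours in fact with the marginally sharper exponent $\eps^{1/2+\delta}$).
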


\begin{proof}
Integrating from $t_1$ to $t_2$ the estimate
in Lemma \ref{main estimate}, we have that
\begin{align*}
\jb{\Ll w(t_2),w(t_2)}\leq & \jb{\Ll w(t_1),w(t_1)}
+c(t_2-t_1)\eps\|w\|_{L^{\infty}([t_1,t_2],H^{\frac{1}{2}}_+)}\\
&+c(t_2-t_1)\eps\|w\|_{L^{\infty}([t_1,t_2],H^{\frac{1}{2}}_+)}^2
+c(t_2-t_1)\|w\|_{L^{\infty}([t_1,t_2],H^{\frac{1}{2}}_+)}^3\\
&+c(t_2-t_1)\|w\|_{L^{\infty}([t_1,t_2],H^{\frac{1}{2}}_+)}^4
+(t_2-t_1)\|w\|_{L^{\infty}([t_1,t_2],H^{\frac{1}{2}}_+)}^5.
\end{align*}

\noi
On the other hand, we have
\begin{align*}
\jb{\Ll w(t_1),w(t_1)}=&\frac{1}{2}\textup{Re}\int\frac{1}{i}\partial_xw(t_1)\cj{w}(t_1)dx
-2\int |\eta|^2|w(t_1)|^2dx\\
&-\textup{Re}\int \eta^2\cj{w}(t_1)^2dx+\frac{1}{4}\int|w(t_1)|^2dx\\
\leq &\frac{1}{2}\|w(t_1)\|_{H^{\frac{1}{2}}_+}^2+2\|\eta\|_{L^{\infty}}^2\|w(t_1)\|_{L^2}^2
+\|\eta\|_{L^{\infty}}^2\|w(t_1)\|_{L^2}^2+\frac{1}{4}\|w(t_1)\|_{L^2}^2\\
\leq & 4 \|w(t_1)\|_{H^{\frac{1}{2}}_+}^2.
\end{align*}

\noi
Together with the coerciveness of the linearized operator $\Ll$
in Proposition \ref{prop:coercivity}, this yields
\begin{align*}
\frac{1}{4}\|w\|_{L^{\infty}([t_1,t_2],H^{\frac{1}{2}}_+)}^2\leq &4 \|w(t_1)\|_{H^{\frac{1}{2}}_+}^2
+c(t_2-t_1)\eps\|w\|_{L^{\infty}([t_1,t_2],H^{\frac{1}{2}}_+)}\\
&+c(t_2-t_1)\eps\|w\|_{L^{\infty}([t_1,t_2],H^{\frac{1}{2}}_+)}^2+c(t_2-t_1)\|w\|_{L^{\infty}([t_1,t_2],H^{\frac{1}{2}}_+)}^3\\
&+c(t_2-t_1)\|w\|_{L^{\infty}([t_1,t_2],H^{\frac{1}{2}}_+)}^5.
\end{align*}

\noi
Since $c(t_2-t_1)\eps=c\eps^{\frac{1}{2}+\dl}<\frac{1}{8}$ we can pass the term
$c(t_2-t_1)\eps\|w\|_{H^{\frac{1}{2}}_+}^2$ to the left hand-side of the inequality and
with the estimates in the hypothesis we obtain
\begin{align*}
\frac{1}{8}\|w\|_{L^{\infty}([t_1,t_2],H^{\frac{1}{2}}_+)}^2&\leq 4 \|w(t_1)\|_{H^{\frac{1}{2}}_+}^2+3c\eps^{1+\dl}.
\end{align*}

\noi
This gives us the conclusion with the constant $c_0=\max(32,24c)$
depending only on $\alpha_0,\mu_0$.
\end{proof}

The proposition below is the main step
in proving Theorem \ref{main theorem}.
\begin{proposition}\label{prop: bootstrap estimate}
Let $\Sigma$ be a compact subset of $\R\times\R^{\ast}_+\times\T\times\R^{\ast}_+$, $0<\delta<\frac{1}{2}$,
and let $\eps>0$ be such that $\eps^{\frac{1}{2}}<\gamma_0$,
where $\gamma_0$ was defined in Lemma \ref{lemma: def w}.
Suppose $\inf_{g\in\Sigma}\|u(0)-g\cdot\eta\|_{H^{\frac{1}{2}}_+}\leq\eps^{\frac{1}{2}+\frac{\dl}{2}}$.
Then, for all
\[0<t\leq\frac{\dl}{6\ln c_0}\cdot\frac{1}{\eps^{\frac{1}{2}-\dl}}
\ln(\frac{1}{\eps}),\]

\noi
the solution of the perturbed Szeg\"o equation \eqref{Szego T} at time $t$
can be parameterized as in Lemma \ref{lemma: def w},
$u(t)=g(t)(\eta+w(t))$. Moreover, we have
\begin{align}\label{central estim}
\|w\|_{L^{\infty}([0,t],H^{\frac{1}{2}}_+)}
\leq &\eps^{-\frac{\dl}{6}}\|w(0)\|_{H^{\frac{1}{2}}_+}
+\eps^{\frac{1}{2}+\frac{\delta}{3}}\\
\intertext{and}
\frac{\mu_0}{2}\leq&\mu(t)\leq \frac{3\mu_0}{2}.\notag
\end{align}

\end{proposition}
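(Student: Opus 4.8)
The plan is a continuity (bootstrap) argument in which Proposition~\ref{prop:bootstrat step} is applied successively on $N\sim\ln(1/\eps)$ consecutive time intervals, each of the maximal admissible length $\eps^{-(1/2-\dl)}$. Throughout, write $t_{\max}=\frac{\dl}{6\ln c_0}\,\eps^{-(1/2-\dl)}\ln(1/\eps)$. Since $\inf_{g\in\Sigma}\|u(0)-g\cdot\eta\|_{H^{\frac{1}{2}}_+}\leq\eps^{\frac{1}{2}+\frac{\dl}{2}}<\eps^{\frac{1}{2}}<\gamma_0$, Lemma~\ref{lemma: def w} produces the decomposition $u(0)=g(0)\cdot(\eta+w(0))$ with $g(0)\in\Sigma$, and, $\|w(0)\|_{H^{\frac{1}{2}}_+}$ being controlled up to a constant by $\inf_{g\in\Sigma}\|u(0)-g\cdot\eta\|_{H^{\frac{1}{2}}_+}$, one has $\|w(0)\|_{H^{\frac{1}{2}}_+}\leq C\eps^{\frac{1}{2}+\frac{\dl}{2}}$. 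Let $T^{\ast}$ be the supremum of those $T\in[0,t_{\max}]$ for which the decomposition $u=g\cdot(\eta+w)$ persists on $[0,T]$ together with $\|w\|_{L^{\infty}([0,T],H^{\frac{1}{2}}_+)}\leq\eps^{\frac{1}{2}}$ and $\frac{\mu_0}{2}\leq\mu(t)\leq\frac{3\mu_0}{2}$ for $t\in[0,T]$; by continuity $T^{\ast}>0$. As long as $\|w\|_{H^{\frac{1}{2}}_+}$ stays small and $\alpha,\mu$ stay in a fixed compact range the decomposition survives (the construction of Lemma~\ref{lemma: def w} being uniform in the translation and phase parameters, the actions of $e_1$ and $e_3$ being symplectic isometries), and by Lemma~\ref{lemma:alpha2mu} and the bound on $\mu$ the parameters $\alpha,\mu$ do remain in such a range. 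Hence it is enough to prove $T^{\ast}=t_{\max}$ while recovering these two bounds with strict inequality on $[0,T^{\ast}]$.

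First I would control the drift of $\mu$ (and of $\alpha$) on $[0,T^{\ast})$. By the definition of $X$ in \eqref{eqn:X}, the $e_4$- and $e_2$-components of $X$ are $\dot\mu/\mu+2C$ and $\dot\alpha/\alpha-C$; by \eqref{estimate ABC} one has $|A|,|B|,|C|\leq c\eps$, while Lemma~\ref{lemma:X} together with the bootstrap bound $\|w\|_{H^{\frac{1}{2}}_+}\leq\eps^{\frac{1}{2}}$ gives $\|X\|\leq c(\eps\cdot\eps^{\frac{1}{2}}+\eps+\eps^{\frac{3}{2}})\leq c\eps$. Hence $|\dot\mu/\mu|+|\dot\alpha/\alpha|\leq c\eps$, and integrating over $[0,t]$ with $t\leq t_{\max}$,
\begin{equation*}
\Big|\ln\frac{\mu(t)}{\mu_0}\Big|+\Big|\ln\frac{\alpha(t)}{\alpha_0}\Big|\leq c\,\eps\, t_{\max}=c\,\frac{\dl}{6\ln c_0}\,\eps^{\frac{1}{2}+\dl}\ln(1/\eps),
\end{equation*}
which tends to $0$ as $\eps\to 0$ because $\dl>0$. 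Consequently, for $\eps$ small enough $\frac{3}{4}\mu_0<\mu(t)<\frac{4}{3}\mu_0$ and $\alpha(t)$ stays comparable to $\alpha_0$; in particular $\mu(t)$ lies strictly inside $[\mu_0/2,3\mu_0/2]$.

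For the iteration, set $L=\eps^{-(1/2-\dl)}$ and partition $[0,T^{\ast}]$ into $N$ subintervals $I_k=[t_{k-1},t_k]$, $t_k=\min(kL,T^{\ast})$, with $N\leq\lceil t_{\max}/L\rceil=\lceil\frac{\dl}{6\ln c_0}\ln(1/\eps)\rceil$. On each $I_k$ all the hypotheses of Proposition~\ref{prop:bootstrat step} hold (length $\leq L$, and on $I_k$ both $\|w\|_{H^{\frac{1}{2}}_+}\leq\eps^{\frac{1}{2}}$ and $\frac{\mu_0}{2}\leq\mu\leq\frac{3\mu_0}{2}$ by the bootstrap, with $\eps$ small), so
\begin{equation*}
\|w\|_{L^{\infty}(I_k,H^{\frac{1}{2}}_+)}\leq c_0\|w(t_{k-1})\|_{H^{\frac{1}{2}}_+}+c_0\,\eps^{\frac{1+\dl}{2}}.
\end{equation*}
Writing $m_k=\|w(t_k)\|_{H^{\frac{1}{2}}_+}$, this yields $m_k\leq c_0 m_{k-1}+c_0\eps^{\frac{1+\dl}{2}}$, and since $c_0>2$ the quantity $m_k+\frac{c_0}{c_0-1}\eps^{\frac{1+\dl}{2}}$ grows by a factor at most $c_0$ at each step; iterating and taking the supremum over the $I_k$,
\begin{equation*}
\|w\|_{L^{\infty}([0,T^{\ast}],H^{\frac{1}{2}}_+)}\leq c_0^{\,N}\Big(\|w(0)\|_{H^{\frac{1}{2}}_+}+3\eps^{\frac{1+\dl}{2}}\Big).
\end{equation*}
The choice of $N$ gives $c_0^{\,N}=\exp\!\big(\tfrac{\dl}{6}\ln(1/\eps)\big)=\eps^{-\frac{\dl}{6}}$ (up to the harmless ceiling), and since $\frac{1+\dl}{2}-\frac{\dl}{6}=\frac{1}{2}+\frac{\dl}{3}$, this is exactly \eqref{central estim}: $\|w\|_{L^{\infty}([0,T^{\ast}],H^{\frac{1}{2}}_+)}\leq\eps^{-\frac{\dl}{6}}\|w(0)\|_{H^{\frac{1}{2}}_+}+C\eps^{\frac{1}{2}+\frac{\dl}{3}}$.

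It remains to close the bootstrap. From $\|w(0)\|_{H^{\frac{1}{2}}_+}\leq C\eps^{\frac{1}{2}+\frac{\dl}{2}}$, the last estimate is $\leq C\eps^{\frac{1}{2}+\frac{\dl}{3}}$, which is $<\eps^{\frac{1}{2}}$ for $\eps$ small; together with $\frac{3}{4}\mu_0<\mu<\frac{4}{3}\mu_0$ from the second step, both defining bounds of $T^{\ast}$ hold with strict inequality on $[0,T^{\ast}]$, so the decomposition and these strict bounds extend a little past $T^{\ast}$ unless $T^{\ast}=t_{\max}$; hence $T^{\ast}=t_{\max}$, giving \eqref{central estim} and the bound $\frac{\mu_0}{2}\leq\mu\leq\frac{3\mu_0}{2}$ on the full interval. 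The delicate point is the balance inside the iteration: Proposition~\ref{prop:bootstrat step} can only be run on intervals of length $\eps^{-(1/2-\dl)}$, which forces $N\sim\ln(1/\eps)$ steps, each inflating $\|w\|_{H^{\frac{1}{2}}_+}$ by the fixed factor $c_0>2$, so the total gain $c_0^{\,N}=\eps^{-\dl/6}$ must stay small enough that the output remains below the threshold $\eps^{1/2}$ needed to re-apply the proposition — which is exactly why the datum is assumed at the stronger level $\eps^{1/2+\dl/2}$ and why $t_{\max}$ carries the precise constant $\frac{\dl}{6\ln c_0}$. A secondary difficulty is that the bootstrap is genuinely two-dimensional: the confinement of $\mu$ (and $\alpha$) near $\mu_0$ is needed even to invoke Proposition~\ref{prop:bootstrat step}, and is supplied by Lemma~\ref{lemma:X} and the $O(\eps)$ drift of $\mu$.
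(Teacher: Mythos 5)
Your proof is correct and takes essentially the same route as the paper: a continuity/bootstrap argument that iterates Proposition~\ref{prop:bootstrat step} over $\sim \ln(1/\eps)$ intervals of length $\eps^{-(1/2-\dl)}$, uses $c_0^{\,N}\approx\eps^{-\dl/6}$ to convert the hypothesis $\|w(0)\|_{H^{1/2}_+}\lesssim\eps^{1/2+\dl/2}$ into the bound $\eps^{1/2+\dl/3}$, and closes by strict improvement plus continuity. The only (harmless) variation is ordering: you first confine $\mu$ and $\alpha$ using the crude bound $\|X\|\lesssim\eps$ supplied by Lemma~\ref{lemma:X} under the bootstrap hypothesis $\|w\|_{H^{1/2}_+}\leq\eps^{1/2}$ (which already suffices since the dominant $O(\eps)$ drift of $\dot\mu/\mu$ comes from the $C$-term, not from $\|X\|$), whereas the paper does the $w$-iteration first and then uses the sharper $\|X\|\lesssim\eps^{1+2\dl/3}$; and you iterate explicitly only on $[0,T^{\ast}]$ rather than nominally on all $k$ intervals, which tidies a point the paper leaves a bit loose.
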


\begin{proof}
We use a bootstrap argument. Set
\begin{equation}\label{def T}
T:=\sup\big\{t>0\Big| \inf_{g\in\Sigma}\|u(t)-g\cdot\eta\|_{L^{\infty}([0,t],H^{1/2}_+)}\leq\eps^{\frac{1}{2}},
\frac{\mu_0}{2}\leq\mu(t)\leq \frac{3\mu_0}{2}\Big\}.
\end{equation}

\noi
We intend to show that $T\geq \frac{\dl}{6\ln c_0}\cdot\frac{1}{\eps^{\frac{1}{2}-\dl}}
\ln(\frac{1}{\eps})$.
Suppose by contradiction that
$$T< \frac{\dl}{6\ln c_0}\cdot\frac{1}{\eps^{\frac{1}{2}-\dl}}
\ln(\frac{1}{\eps}).$$

\noi
Since $\inf_{g\in\Sigma}\|u(t)-g(t)\cdot\eta\|_{L^{\infty}([0,t],H^{1/2}_+)}\leq\eps^{\frac{1}{2}}<\gamma_0$
for all $0<t<T$, it follows by Lemma \ref{lemma: def w} that the solution of the perturbed
Szeg\"o equation \eqref{Szego T} can be reparametrized as $u(t)=g(t)\cdot(\eta+w(t))$ for all $0<t<T$, and moreover
$\|w(t)\|_{L^{\infty}([0,t],H^{1/2}_+)}\leq\eps^{\frac{1}{2}}$.
We apply the Proposition \ref{prop:bootstrat step} successively
on the intervals $[0,\frac{1}{\eps^{\frac{1}{2}-\dl}}]$,
$[\frac{1}{\eps^{\frac{1}{2}-\dl}},\frac{2}{\eps^{\frac{1}{2}-\dl}}]$,...,
$[\frac{k-1}{\eps^{\frac{1}{2}-\dl}},\frac{k}{\eps^{\frac{1}{2}-\dl}}]$.
For $t$ in the interval $[0,\frac{1}{\eps^{\frac{1}{2}-\dl}}]$,
we obtain
\begin{align*}
\|w(t)\|_{H^{\frac{1}{2}}_+}\leq c_0\|w(0)\|_{H^{\frac{1}{2}}_+}
+c_0\eps^{\frac{1+\dl}{2}}.
\end{align*}

\noi
Using this information for $t=\frac{1}{\eps^{\frac{1}{2}-\dl}}$,
we  obtain for $t\in [\frac{1}{\eps^{\frac{1}{2}-\dl}},\frac{2}{\eps^{\frac{1}{2}-\dl}}]$ that
\begin{align*}
\|w(t)\|_{H^{\frac{1}{2}}_+}\leq c_0^2\|w(0)\|_{H^{\frac{1}{2}}_+}
+c_0(1+c_0)\eps^{\frac{1+\dl}{2}}.
\end{align*}

\noi
Ultimately, we have that for all $t\in [0,\frac{k}{\eps^{\frac{1}{2}-\dl}}]$
\begin{align*}
\|w(t)\|_{H^{\frac{1}{2}}_+}&\leq c_0^k\|w(0)\|_{H^{\frac{1}{2}}_+}
+c_0(\sum_{j=0}^{k-1}c_0^j)\eps^{\frac{1+\dl}{2}}
=c_0^k\|w(0)\|_{H^{\frac{1}{2}}_+}+c_0
\frac{c_0^{k}-1}{c_0-1}\eps^{\frac{1+\dl}{2}}.
\end{align*}

\noi
Since $c_0>2$, we have that $c_0
\frac{c_0^{k}-1}{c_0-1}\leq 2c_0^k$. Take $k$ such that $c_0^{k}=\eps^{-\frac{\delta}{6}}$,
which is equivalent to
\[k=\frac{\dl}{6\ln c_0}\ln(\frac{1}{\eps}).\]

\noi
Then,
\begin{align*}
\|w\|_{L^{\infty}([0,\frac{k}{\eps^{\frac{1}{2}-\dl}}],H^{\frac{1}{2}}_+)}
\leq \eps^{-\frac{\delta}{6}}\|w(0)\|_{H^{\frac{1}{2}}_+}+2\eps^{\frac{1}{2}+\frac{\dl}{3}}\leq 3\eps^{\frac{1}{2}+\frac{\dl}{3}}.
\end{align*}

\noi
Therefore, we have
for $0\leq t\leq \frac{\dl}{6\ln c_0}\cdot\frac{1}{\eps^{\frac{1}{2}-\delta}}
\ln(\frac{1}{\eps})$ that
\begin{align}\label{contrad bootstrap}
&\|w(t)\|_{L^{\infty}([0,t],H^{\frac{1}{2}}_+)}
\leq 3\eps^{\frac{1}{2}+\frac{\dl}{3}},\\
\intertext{and by Lemma \ref{lemma:X} it follows that}
&\|X\|\leq c\eps^{1+\frac{2\dl}{3}}.\notag
\end{align}

\noi
By the definition of $X$ \eqref{eqn:X}, it follows that
\[\Big|\frac{\dot{\mu}}{\mu}+2C\Big|\leq c\eps^{1+\frac{2\delta}{3}}.\]

\noi
Thus
\[\frac{\dot{\mu}}{\mu}\leq-\frac{2\eps}{\pi}\int b'(a+\frac{x}{\mu})|\eta(x)|^2\frac{dx}{\mu}
+c\eps^{1+\frac{2\delta}{3}}.\]

\noi
Integrating from $0$ to $t$, where $0\leq t\leq \frac{\dl}{6\ln c_0}\cdot\frac{1}{\eps^{\frac{1}{2}-\delta}}
\ln(\frac{1}{\eps})$, and using the change of variables $y=a+\frac{x}{\mu}$, we obtain that
\[\ln\big(\frac{\mu(t)}{\mu_0}\big)\leq c(\eps\|b'\|_{L^1}\|\eta\|_{L^{\infty}}^2+\eps^{1+\frac{2\delta}{3}})t
\leq \frac{c\dl}{6\ln c_0}\cdot \eps^{\frac{1}{2}+\delta}\ln(\frac{1}{\eps}).\]

\noi
Since around zero we have the Taylor expansion $\ln(1+x)=x+O(x^2)$, it follows that
\[\frac{\mu(t)-\mu_0}{\mu_0}
\leq \frac{c\dl}{6\ln c_0}\cdot \eps^{\frac{1}{2}+\delta}\ln(\frac{1}{\eps}).\]

\noi
Hence, we obtain
\[|\mu(t)-\mu_0|
\leq \tilde{c}_0\dl\eps^{\frac{1}{2}+\delta}\ln(\frac{1}{\eps}),\]

\noi
where $\tilde{c}_0$ is a constant depending on $\alpha_0,\mu_0$.
Thus,
\begin{equation}\label{contrad bootstrap 2}
\frac{2\mu_0}{3}\leq\mu(t)\leq\frac{4\mu_0}{3}\,\,\,\,\,\,\,\,\, \text{ for }\,\,\,\,\,\,\,\,\,\,
0\leq t\leq \frac{\dl}{6\ln c_0}\cdot\frac{1}{\eps^{\frac{1}{2}-\delta}}\ln(\frac{1}{\eps}).
\end{equation}

Equations \eqref{contrad bootstrap} and \eqref{contrad bootstrap 2}
show that the conditions in the definition of $T$ \eqref{def T} hold with better bounds,
i.e. $3\eps^{\frac{1}{2}+\frac{\delta}{3}}$ instead of $\eps^{\frac{1}{2}}$,
$\frac{4\mu_0}{3}$ instead of $\frac{3\mu_0}{2}$,
and $\frac{2\mu_0}{3}$ instead of $\frac{\mu_0}{2}$, for
$0\leq t\leq \frac{\dl}{6\ln c_0}\cdot\frac{1}{\eps^{\frac{1}{2}-\delta}}
\ln(\frac{1}{\eps})$.
Since $w(t)$ and $\mu(t)$ are continuous with respect to time,
it follows that there exists $t_0>0$ such that
the conditions in the definition of $T$ with exactly the same bounds as in that definition
hold for times $0\leq t\leq \frac{\dl}{6\ln c_0}\cdot\frac{1}{\eps^{\frac{1}{2}-\delta}}
\ln(\frac{1}{\eps})+t_0$.
This contradicts our assumption $T< \frac{\dl}{6\ln c_0}\cdot\frac{1}{\eps^{\frac{1}{2}-\dl}}
\ln(\frac{1}{\eps})$.
Therefore, the conclusion of the proposition follows.
\end{proof}

\section{Proof of Theorem \ref{main theorem}}

In this section we prove that Theorem \ref{main theorem} follows from Proposition \ref{prop: bootstrap estimate}.
\begin{proof}[Proof of Theorem \ref{main theorem}]
First we notice that $u(0)=g(0)\cdot\eta$, where \\ $g(0)=(a_0,\alpha_0,\phi_0,\mu_0)$.
Thus, by Proposition \ref{prop: bootstrap estimate}, it
follows that $u(t)$ can be reparametrized as $u(t)=g(t)\cdot(\eta+w(t))$
for times $0\leq t\leq \frac{\dl}{6\ln c_0}\cdot\frac{1}{\eps^{\frac{1}{2}-\delta}}
\ln(\frac{1}{\eps})$,
and moreover
\begin{align*}
&\|w(t)\|_{L^{\infty}([0,t],H^{\frac{1}{2}}_+)}
\leq 3\eps^{\frac{1}{2}+\frac{\dl}{3}},\,\,\,\,\,\,\frac{\mu_0}{2}\leq \mu(t)\leq \frac{3\mu_0}{2}.
\end{align*}

\noi
By Lemma \ref{lemma:X}
we then obtain
\begin{equation}\label{estimate X}
\|X\|\leq c\eps^{1+\frac{2\dl}{3}}.
\end{equation}

\noi
Proceeding as in the last part of the proof of Proposition \ref{prop: bootstrap estimate},
we obtain that
\begin{align*}
|\mu(t)-\mu_0|\leq \tilde{c}_0\dl\eps^{\frac{1}{2}+\delta}
\ln(\frac{1}{\eps}).
\end{align*}

\noi
for $0\leq t\leq \frac{\dl}{6\ln c_0}\cdot\frac{1}{\eps^{\frac{1}{2}-\delta}}
\ln(\frac{1}{\eps})$. Similarly we have
$|\bar{\mu}(t)-\mu_0|\leq \tilde{c}_0\dl\eps^{\frac{1}{2}+\delta}
\ln(\frac{1}{\eps})$. Then
\begin{align*}
|\mu(t)-\cj{\mu}(t)|\leq \tilde{c}_0\dl\eps^{\frac{1}{2}+\delta}
\ln(\frac{1}{\eps}).
\end{align*}

\noi
By equation \eqref{estimate X} and using the definition of $X$,
it follows that
\[\Big|\frac{\dot{\alpha}}{\alpha}-C\Big|\leq c\eps^{1+\frac{2\delta}{3}}.\]

\noi
Thus
\[\frac{\dot{\alpha}}{\alpha}\leq \frac{\eps}{\pi}\int b'(a+\frac{x}{\mu})|\eta(x)|^2\frac{dx}{\mu}
+c\eps^{1+\frac{2\delta}{3}}.\]

\noi
Proceeding as we did for $\mu(t)$ and possibly making the constant
$\tilde{c}_0$ larger, we obtain that
\begin{align*}
|\alpha(t)-\alpha_0|
\leq \tilde{c}_0\dl\eps^{\frac{1}{2}+\delta}\ln(\frac{1}{\eps}),\\
|\alpha(t)-\cj{\alpha}(t)|\leq \tilde{c}_0\dl\eps^{\frac{1}{2}+\delta}
\ln(\frac{1}{\eps}),
\end{align*}

\noi
for $0\leq t\leq \frac{\dl}{6\ln c_0}\cdot\frac{1}{\eps^{\frac{1}{2}-\delta}}
\ln(\frac{1}{\eps})$.

We thus proved that for the above range of time, $\mu(t)$ and $\alpha(t)$
stay close to $\mu_0$ and $\alpha_0$ respectively.
The definition of $X$ \eqref{eqn:X} and the estimate \eqref{estimate X}
then yield that $a,\alpha,\phi,\mu$
satisfy the perturbed effective dynamics \eqref{sys perturbations}
in the statement of Theorem \ref{main theorem}.

By Lemma \ref{lemma:alpha2mu} we have that $\|w\|_{L^2}^2
=\pi\Big(\frac{\alpha_0^2\mu_0}{\alpha^2\mu}-1\Big)$. Then, the
equations satisfied by $\bar{\alpha}$ and $\bar{\mu}$ yield
$\partial_t(\bar{\alpha}^2\bar{\mu})=0$, and thus we obtain that
\begin{align*}
\alpha^2\mu=\alpha_0^2\mu_0+c\eps^{1+\frac{2\delta}{3}},\,\,\,\,\,
\bar{\alpha}^2\bar{\mu}=\alpha_0^2\mu_0.
\end{align*}

\noi
Subtracting the equations satisfied by $\phi$ and $\bar{\phi}$, we then obtain that
\begin{align*}
|\dot{\phi}-\dot{\bar{\phi}}|&=\Big|-\frac{\alpha_0^2\mu_0}{4}(\mu-\bar{\mu})
-\frac{\eps}{\pi}\int\big(b(a+\frac{x}{\mu})-b(\bar{a}+\frac{x}{\bar{\mu}})\Big)|\eta|^2dx\\
&\hphantom{XXXXXXXXXX}-\frac{\eps}{\pi}\int\Big(b'(a+\frac{x}{\mu})-b'(\bar{a}
+\frac{x}{\bar{\mu}})\Big)x|\eta(x)|^2\frac{dx}{\mu}\Big|+c\eps^{1+\frac{2\dl}{3}}\\
&\leq c|\mu-\bar{\mu}|+c\eps
\leq (\tilde{c}_0\delta+c)\eps^{\frac{1}{2}
+\delta}\ln(\frac{1}{\eps}).
\end{align*}

\noi
Integrating, we obtain the desired estimate for $|\phi-\bar{\phi}|$.
Similarly we obtain the estimate for $|a-\bar{a}|$.

Let $0<\rho\ll 1$. Suppose $\eps$ is small enough such that $\eps^{\rho}\ln(\frac{1}{\eps})^2\leq 1$.
Then we have that
\[|\phi-\bar{\phi}|\leq \tilde{c}_0\dl\eps^{2\delta}\ln(\frac{1}{\eps})^2\leq c\eps^{2\delta-\rho}.\]

\noi
If $2\delta-\rho \geq\frac{1}{2}+\frac{\delta}{3}$, which is equivalent
to $\delta\geq \frac{3}{10}+\frac{3}{5}\rho>\frac{3}{10}$,
then one can easily see that
$|\phi-\bar{\phi}|\leq \eps^{\frac{1}{2}+\frac{\delta}{3}}$.
This together with the approximations for $a,\alpha,\mu$
in equations \eqref{eqn a-bar{a}} yields

\begin{align*}
\|\alpha(t)e^{i\phi(t)}\mu(t)\eta(\mu(t)(x-a(t)))
-\bar{\alpha}(t)e^{i\bar{\phi}(t)}\bar{\mu}(t)\eta(\bar{\mu}(t)(x-\bar{a}(t)))\|_{H^{\frac{1}{2}}_+}
\leq c\eps^{\frac{1}{2}+\frac{\delta}{3}}.
\end{align*}

\noi
Thus, if $\delta\geq \frac{3}{10}+\frac{3}{5}\rho>\frac{3}{10}$,
we have that
\begin{align*}
\|u(t)-\bar{\alpha}(t)e^{i\bar{\phi}(t)}\bar{\mu}(t)\eta(\bar{\mu}(t)(x-\bar{a}(t)))\|_{H^{\frac{1}{2}}_+}
\leq c\eps^{\frac{1}{2}+\frac{\delta}{3}},
\end{align*}

\noi
for times $0\leq t\leq \frac{\dl}{6\ln c_0}\cdot\frac{1}{\eps^{\frac{1}{2}-\delta}}\ln(\frac{1}{\eps})$.
\end{proof}

{\bf Acknowledgments:}
The author is grateful to her Ph.D. advisor Prof. Patrick G\'erard
for suggesting this problem to her and for helpful comments on the paper.

\end{document}